 \newcommand{\iddef}[1]{\Hy@raisedlink{\hypertarget{#1}{}}#1}
\newcommand{\idref}[1]{\hyperlink{#1}{#1}}
\Crefname{algorithm}{Algorithm}{Algorithms}
\numberwithin{equation}{section}
\theoremstyle:=definition,remark,plain\do{%
     \expandafter\g@addto@macro\csname th@\theoremstyle\endcsname{%
        \addtolength\thm@preskip\parskip
     }%
   }
\newtheoremstyle{indented}{5pt}{5pt}{\itshape}{2.5em}{\bfseries}{.}{.5em}{}
\theoremstyle{plain}
\newtheorem{theorem}{Theorem}[section]
\newtheorem{lemma}[theorem]{Lemma}
\newtheorem{proposition}[theorem]{Proposition}
\newtheorem{corollary}[theorem]{Corollary}
\newtheorem{definition}[theorem]{Definition}
\theoremstyle{definition}
\newtheorem{example}[theorem]{Example}
\newtheorem{remark}[theorem]{Remark}
\newtheorem*{notation*}{Notation}
\theoremstyle{indented}
\newtheorem*{claim*}{\indent Claim}
\newcommand{\flatt}{\mathrm{flatt}}
\renewcommand{\bf}{\mathbf}
\newcommand{\tn}{\textnormal}
\newcommand{\im}{\tn{Im}\hspace{0.05cm}}
\newcommand{\rk}{\tn{rk}}
\newcommand{\CC}{\mathbb{C}}
\newcommand{\PP}{\mathbb{P}}
\newcommand{\bfa}{\mathbf{a}}
\newcommand{\bfb}{\mathbf{b}}
\newcommand{\bfc}{\mathbf{c}}
\newcommand{\bfe}{\mathbf{e}}
\newcommand{\bfm}{\mathbf{m}}
\newcommand{\bfn}{\mathbf{n}}
\newcommand{\bfp}{\mathbf{p}}
\newcommand{\bfu}{\mathbf{u}}
\newcommand{\bfv}{\mathbf{v}}
\newcommand{\bfw}{\mathbf{w}}
\newcommand{\bfx}{\mathbf{x}}
\newcommand{\bfy}{\mathbf{y}}
\newcommand{\calC}{\mathcal{C}}
\newcommand{\calD}{\mathcal{D}}
\newcommand{\calF}{\mathcal{F}}
\newcommand{\calO}{\mathcal{O}}
\newcommand{\calX}{\mathcal{X}}
\newcommand{\bbC}{\mathbb{C}}
\newcommand{\bbN}{\mathbb{N}}
\newcommand{\bbP}{\mathbb{P}}
\newcommand{\bbU}{\mathbb{U}}
\newcommand{\bbV}{\mathbb{V}}
\newcommand{\GL}{\mathrm{GL}}
\newcommand{\Sub}{\mathrm{Sub}}
\newcommand{\Mat}{\mathrm{Mat}}
\newcommand{\brk}{\mathrm{brk}}
\title{Decomposition loci of tensors}
\author{Alessandra Bernardi, Alessandro Oneto, Pierpaola Santarsiero}
\address[A. Bernardi, A. Oneto]{Universit\`a di Trento, Dipartimento di Matematica, Via Sommarive, 14 - 38123 Povo (Trento), Italy}
\email{alessandra.bernardi@unitn.it, alessandro.oneto@unitn.it}
\address[P. Santarsiero]{Universit\`a di Bologna, Dipartimento di Matematica, Piazza di Porta S.Donato 5, Bologna, Italy}
\email{pierpaola.santarsiero@unibo.it}
\begin{document}
\keywords{Tensor rank, Decomposition locus, Tensor decomposition}

\subjclass{15A69; 14N07; 14N05}
\maketitle
\begin{abstract}
    The decomposition locus of a tensor is the set of rank-one tensors appearing in a minimal tensor-rank decomposition of the tensor. For tensors lying on the tangential variety of any Segre variety, but not on the variety itself, we show that the decomposition locus consists of all rank-one tensors except the tangency point only. We also explicitly compute decomposition loci of all tensors belonging to tensor spaces with finitely many orbits with respect to the action of product of general linear groups.

\end{abstract}

\section{Introduction}\label{sec:intro}
Decompositions of tensors as sums of decomposable tensors, also known as \textit{tensor-rank decompositions} or \textit{canonical polyadic decompositions}, are ubiquitous in many areas of applied mathematics and engineering, see \cite{kolda2009tensor,L,sidiropoulos2017tensor, QuantumLectures}. We are interested in decompositions having minimal length. 
\begin{definition}\label{rank1}
The \textbf{rank} of a tensor $T\in \mathbb{C}^{n_1} \otimes \cdots \otimes \mathbb{C}^{n_k}$ is the minimum integer $r$ such that 
\[
    T= \sum_{i=1}^r P_i, \text{ with } P_i = \bfv_i^1\otimes \cdots \otimes \bfv_i^k\in \mathbb{C}^{n_1} \otimes \cdots \otimes \mathbb{C}^{n_k}.
\]
We denote it by $\rk(T).$
\end{definition}
The computation of the rank is known to be an NP-hard problem \cite{hillar2013most}.

In the recent paper \cite{fawzi2022discovering}, the authors proposed a reinforcement learning approach to find minimal decomposition of the third-order tensor representing matrix multiplication: one of the most challenging problems on tensor decompositions, see \cite{landsberg2017geometry}. The authors formulate the search as a game, called \textit{TensorGame}, which goes as follows: 
\begin{itemize}
    \item the start position is a given tensor $T_0 = T$;
    \item at each step $t$, the player takes a tuple of vectors $(\bfv^1,\ldots,\bfv^k)$ and the current state by defining \[T_t \leftarrow T_{t-1} - \bfv^1\otimes\cdots\otimes\bfv^k;\]
    \item the game ends when $T_R = 0$. 
\end{itemize}
The challenge is to find, at each step, a rank-one tensor that drops the rank in such way the number of steps performed during the game are exactly the rank of the tensor $T$. 

The natural question is then the following:
\begin{center}\label{Q1}
    \textit{given a tensor $T$ what are the rank-one tensors that drop the rank of $T$?}
\end{center}
%We consider the following loci.
\begin{definition}\label{def:decomposition}
The \textbf{decomposition locus} of a tensor $T\in \mathbb{C}^{n_1}\otimes \cdots \otimes \mathbb{C}^{n_k}$ is
$$ 
\mathcal{D}_T := \left\{ (\bfv^1,\ldots,\bfv^k) \in \mathbb{C}^{n_1}\times \cdots \times \mathbb{C}^{n_k} ~:~ \rk(T - \lambda\bfv^1\otimes\cdots\otimes\bfv^k) = \rk(T) - 1, \text{ for some } \lambda \in \bbC\right\}.
$$
The \textbf{forbidden locus} $\mathcal{F}_T$ of $T$ is the complement of $\mathcal{D}_T$ in $\bbC^{n_1}\times\cdots\times \bbC^{n_k}$, i.e.
$$
\mathcal{F}_T: = \left\{ (\bfv^1,\ldots,\bfv^k) \in \mathbb{C}^{n_1}\times \cdots \times \mathbb{C}^{n_k} ~:~ \rk(T - \lambda \bfv^1\otimes\cdots\otimes\bfv^k) \geq \rk(T), \text{ for any } \lambda \in \bbC\right\}.
$$
\end{definition}
These objects were introduced in \cite{CCO} for symmetric tensor decompositions, also known as Waring decompositions of homogeneous polynomials, and computed for particular families of polynomials. Those ideas were used in \cite{MO} to classify homogeneous polynomials of rank at most $5$ in terms of algebraic properties of sets of points supporting their minimal decompositions. A first example of symmetric tensor having {empty} (symmetric) forbidden locus is provided in \cite{flavi2024decompositions}. A version of forbidden loci for non-reduced zero-dimensional schemes apolar to homogeneous polynomials was considered in \cite{ventura2019real}. The notion of decomposition locus is strictly related to the more geometric notion of \textit{entry locus} (see \cite{russo2009varieties,ionescu2008varieties}). More recently, in \cite{horobet2023does}, it was studied when some of the critical rank-one approximations belong to the decomposition locus of the tensor. 

In this work, we concentrate on tensors of order three for which the orbits under the action of $\GL_{n_1} \mathbb{C}\times \GL_{n_2}\mathbb{C} \times \GL_{n_3}\mathbb{C}$ are finite in number. Consequently, it should be noted that our work cannot be directly applied to compute the rank of the matrix multiplication tensor $M_{\langle 3 \rangle}$; however, we hope that it will provide valuable insights.

\subsection{Structure of the paper}
In this paper, we address the question for some special families of tensors. For sake of completeness, we first recall the well-understood case of matrices, see \Cref{sec:matrices}. In \Cref{sec:tangential}, we consider tensors that lie on the tangential variety of Segre varieties of any order. In the literature of quantum information, these are known as {\it $W$-states}, see e.g. \cite{ballico2019partially} and the references therein. Our proof of \Cref{thm:tangential} gives also a recipe to construct minimal rank decompositions of $W$-states starting from almost any rank-one tensor, see \Cref{prop:tangent_alldifferent}. In \Cref{sec:22n,sec:23n}, we consider tensors in $\bbC^2 \otimes \bbC^2 \otimes \bbC^n$ and $\bbC^2 \otimes \bbC^3 \otimes \bbC^n$, namely pencils of $2\times n$ and $3 \times n$ matrices. These are the only spaces of tensors, beside matrices, in which we have finitely many orbits with respect to the action of the product of general linear groups, see \Cref{ssec:finite_orbits}. Our computation will be explicitly made on their normal forms by using the algebra software Macaulay2 \cite{M2} and can be found at \url{https://sites.google.com/view/alessandrooneto/research/publications}. 

\subsection{Acknowledgements} 
We thank Edoardo Ballico for fruitful discussions and suggestions. The authors are members of GNSAGA (INdAM). All authors have been funded by the European Union under NextGenerationEU. PRIN 2022 Prot. n. 2022ZRRL4C\_004, Prot. n. 20223B5S8L and Prot. n. 2022E2Z4AK, respectively. Views and opinions expressed are however those of the authors only and do not necessarily reflect those of the European Union or European Commission. Neither the European Union nor the granting authority can be held responsible for them.

\begin{center}
    \includegraphics[width=\textwidth]{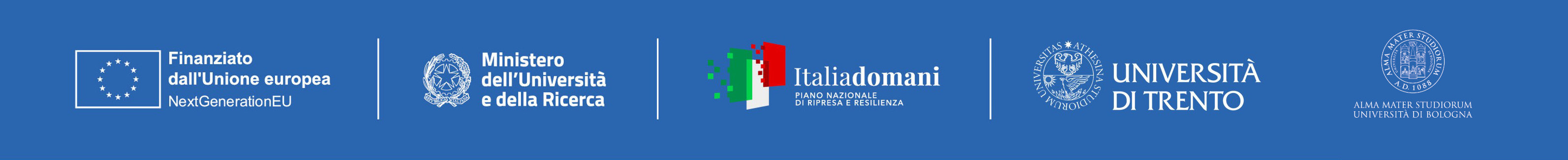}
\end{center}

\section{Preliminaries}\label{sec:preliminaries}
In this section we briefly recall some basic definition and notation. For an extensive introduction to tensor decomposition and its algebro-geometric approach see \cite{L,bernardi2018hitchhiker}.

\subsection{Segre varieties and secant varieties} 
Vectors are denoted in bold face, e.g., $\bfv$, and their corresponding projective class is denoted within brackets, e.g., $[\bfv]$. First, we recall notation for the space of rank-one tensors, i.e. \textit{Segre varieties}, and the Zariski closure of the space of rank-$r$ tensors, i.e. \textit{$r$-th secant varieties} of Segre varieties. Due to the independence of tensor rank by scalar multiplication, it is natural to consider these spaces projectively. 
\begin{definition}
    Fix integers $\bfn = (n_1,\ldots,n_k)$. The \textbf{Segre embedding} is the map of $\bbP\bbC^{n_1}\times\cdots\times\bbP\bbC^{n_k}$ defined by the line bundle $\calO_{\bbP\bbC^{n_1}\times\cdots\times\bbP\bbC^{n_k}}(1^k)$, i.e., 
    \[
    \begin{array}{r c c c}
        \nu_{\bfn} : & \bbP\bbC^{n_1}\times\cdots\times\bbP\bbC^{n_k} & \longrightarrow & \bbP\left(\bbC^{n_1}\otimes\cdots\otimes\bbC^{n_k}\right) \\
        & ([\bfv^1],\ldots,[\bfv^k]) & \mapsto & [\bfv^1\otimes\cdots\otimes\bfv^k].
    \end{array}
    \]
    The \textbf{Segre variety} is $X_{\bfn} = \im(\nu_\bfn)$. Its \textbf{$r$-th secant variety} is 
    \[
        \sigma_r(X_\bfn) = \overline{\left\{[T] ~:~ \rk(T) \leq r\right\}} \subset \bbP\left(\bbC^{n_1}\otimes\cdots\otimes\bbC^{n_k}\right). 
    \]
\end{definition}
We use the notation $n^k = (n,n,\ldots,n) \in \bbN^k$.

The first $r$ such that $[T] \in \sigma_r(X_{\bfn})$ is called the \textbf{border rank} of $[T]$:
\[
    \brk(T) = \min\{r ~:~ [T] \in \sigma_r(X_\bfn)\}.
\]

\begin{remark}
 
With an abuse of notation, we will denote by $\calD_T$ the decomposition locus of the tensor $T$ defined in \Cref{def:decomposition} also when regarded as subspace of the Segre variety product of projective spaces $\bbP\bbC^{n_1}\times\cdots\times\bbP\bbC^{n_k}$ or when regarded as a subset of the Segre variety $X_\bfn$; analogously for the forbidden locus. 
\end{remark}

By {\it conciseness} (see \cite[Proposition 3.1.3.1]{L}) or {\it autarky} (see \cite{BB17}),  if $T \in V_1 \otimes \cdots \otimes V_r$ where $V_i \subseteq \bbC^{n_i}$, then $\calD_T \subset V_1 \times \cdots \times V_r$. Hence, we will always consider tensors in their \emph{concise space}, namely $T \in V_1 \otimes\cdots\otimes V_k$ such that there is no $W_1 \otimes\cdots\otimes W_k \subsetneq V_1 \otimes\cdots\otimes V_k$ containing the tensor $T$. 
\begin{definition}
    For $\bfm=(m_1, \ldots , m_k)$, let ${\rm Sub}_{\bfm}$ be the space of tensors that, up to change of coordinates, lie in some $\PP(\bbC^{m_1}\otimes\cdots\otimes\bbC^{m_k})$.
\end{definition}
A way to understand if a tensor is concise is by looking at its \textit{flattenings}.
\begin{definition}
The $i$-th \textbf{flattening} of a tensor $T\in \CC^{n_1}\otimes \cdots \otimes \CC^{n_k}$ is the linear map 
\[
    \flatt_i(T) : (\bbC^{n_i})^\vee \rightarrow \bigotimes_{j\neq i} \CC^{n_j}
\]
given by contraction on the $i$-th factor. Sometimes, we will also refer to $\flatt_i(T)$ as the associated matrix in the standard basis.
\end{definition}

A classical construction is the one of \textit{dual varieties}. If we consider the Segre variety in the dual projective space $\calX_\bfn \subset \bbP (\bbC^{n_1}\otimes\cdots\otimes\bbC^{n_k})^\vee$, then, by biduality, we consider its dual variety $\calX_\bfn^\vee \subset  \bbP(\bbC^{n_1}\otimes\cdots\otimes\bbC^{n_k})$. This is a hypersurface if and only if $n_i -1\leq \sum_{j\neq i} n_j-1$, for all $i$.
In the following, we will mostly consider dual varieties of three-factors Segre varieties that are hypersurfaces. In such cases, the defining equation of $\calX_{n_1,n_2,n_3}^\vee$ is the so-called \emph{hyperdeterminant}, denoted by $H_{n_1,n_2,n_3}$. See \cite{GKZ,Ottaviani2013}.

\subsection{Spaces of tensors with finitely many orbits}\label{ssec:finite_orbits}
The group $\GL_{n_1}\times \cdots \times \GL_{n_k}$ naturally acts on $\bbC^{n_1}\times\cdots\times\bbC^{n_k}$. In \cite{K}, the author proved that the only spaces of tensors $\mathbb{C}^{n_1}\otimes \cdots \otimes \mathbb{C}^{n_k}$ with a finite number of $\big(\GL_{n_1}\times \cdots \times \GL_{n_k}\big)$-orbits are
\begin{enumerate}[label=\alph*)]
    \item $\mathbb{C}^m\otimes \mathbb{C}^n$;
    \item $\mathbb{C}^2\otimes \mathbb{C}^2\otimes \mathbb{C}^n$;
    \item $\mathbb{C}^2\otimes \mathbb{C}^3\otimes \mathbb{C}^n$.
\end{enumerate}
The case of matrices is well-understood and orbits correspond to matrices with equal rank.

In \cite{parfenov2001orbits}, normal forms for tensors in $\bbC^2 \otimes\bbC^b \otimes \bbC^c$ with $b \leq 3$ were provided. Recall that any three-factor (concise) tensor $T\in \CC^{2}\otimes \CC^{n_2}\otimes \CC^{n_3}$ can be studied as a pencil of $n_2\times n_3$ matrices by looking at the $2$-dimensional image of the first flattening. Explicitly, if we consider the standard basis $\{\bfe_1^i,\ldots,\bfe_{n_i}^i\}$ for the $i$-th factor of $\bbC^{n_1}\otimes\cdots\otimes\bbC^{n_k}$ and we write in coordinates $T=\sum_{i,j,k}t_{i,j,k}\bfe_i^1\otimes \bfe_j^2\otimes \bfe_k^3$, then we associate to $T$ the pencil of matrices $uA+vB$, where $A=(t_{1,i,j})$ and $B=(t_{2,i,j})$ are the matrices of size $n_2\times n_3$ obtained as images of $\bfe^1_1$ and $\bfe^1_2$ through $\flatt_1(T)$, respectively. Clearly, the notation as pencil of matrices of $T$ is not uniquely determined, but a pair of linearly independent matrices $A,B$ determines uniquely the pencil. In \cite{BL}, all ranks and border ranks of tensors in $\mathbb{C}^2\otimes \mathbb{C}^2\otimes \mathbb{C}^n$ and $\mathbb{C}^2\otimes \mathbb{C}^3\otimes \mathbb{C}^n$ were computed through the interpretation the so-called {\it Kronecker normal forms} of pencils of matrices and the theorem by Grigoriev \cite{grigoriev1978multiplicative}, Ja'Ja' \cite{jaja1979optimal} and Teichert \cite{teichert1986komplexitat} on ranks of pencils of matrices. 

We recall in \Cref{table:kronecker} the normal forms of tensors in $\bbC^2\otimes\bbC^{n_2}\otimes\bbC^{n_3}$ for $n_2 \leq 3$, following the numeration of \cite{BL}. 
\begin{notation*}
  
We denote by $T_n$ the $n$-th normal form of  \Cref{table:kronecker} and by $\calO_n$ the corresponding orbit. 
\end{notation*}
\begin{remark}\label{rmk:change_coordinates}
    Note that, since the tensor rank is invariant under the action of $\GL_{n_1}\times \cdots \times \GL_{n_k}$, if $T$ and $T'$ belong to the same orbit, then we can obtain the decomposition locus of one from the other. Say $T = (A_1, \ldots, A_k) \cdot T'$, for $A_i\in \GL_{n_i}$ then
    \begin{align*}
        \calD_T & = \{\bfa^1\otimes\cdots\otimes \bfa^k ~:~ (A_1^{-1}, \ldots, A_k^{-1}) \cdot (\bfa^1\otimes \cdots \otimes \bfa^k) \in \calD_{T'}\} \\ & = \{(A_1, \ldots, A_k)\cdot (\bfv_1 \otimes \cdots \otimes \bfv_k) ~:~ \bfv_1 \otimes \cdots \otimes \bfv_k \in \calD_{T'}\},
    \end{align*}
    and similarly for the forbidden loci. Therefore, besides the case of matrices and tangential tensors, in \Cref{sec:22n,sec:23n} we symbolically compute equations of decomposition loci only of tensors in normal forms. 
\end{remark}

{\footnotesize{
\begin{longtable}{c c c c c c}\label{table:kronecker}
    n. & Kronecker normal form & pencil & $\brk$ & $\rk$ & Ref. \\ \hline 
    
     \iddef{1} & $\bfe^1_1\otimes \bfe^2_1 \otimes \bfe^3_1$ & \footnotesize{$\left(\begin{smallmatrix} u \end{smallmatrix}\right)$} & 1 & 1 & \Cref{prop:matrices} \\
    
      \iddef{2} & $\bfe^1_1\otimes \bfe^2_1 \otimes \bfe^3_1 + \bfe_2^1\otimes \bfe_2^2 \otimes \bfe_1^3$ & \footnotesize{$\left(\begin{smallmatrix} u & v\end{smallmatrix}\right)$} & 2 & 2 & \Cref{prop:matrices}\\
    
       \iddef{3} & $\bfe^1_1\otimes \bfe^2_1 \otimes \bfe^3_1 + \bfe_2^1\otimes \bfe_2^2 \otimes \bfe_2^3$ & \footnotesize{$\left(\begin{smallmatrix} u & \\ & u \end{smallmatrix}\right)$} & 2 & 2 & \Cref{prop:matrices}\\
    
       \iddef{4} & $\bfe^1_1\otimes \bfe^2_1 \otimes \bfe^3_1 + \bfe_2^1\otimes \bfe_1^2 \otimes \bfe_2^3$ & \footnotesize{$\left(\begin{smallmatrix} u  \\ v \end{smallmatrix}\right)$} & 2 & 2 & \Cref{prop:matrices}\\
    
      \iddef{5} & $\bfe^1_1\otimes(\bfe^2_1 \otimes \bfe^3_1 + \bfe_2^2\otimes\bfe_2^3) + \bfe_2^1\otimes\bfe_1^2\otimes\bfe_2^3$ & \footnotesize{$\left(\begin{smallmatrix} u & v \\ & u\end{smallmatrix}\right)$} & 2 & 3 & \Cref{thm:tangential}\\
    
      \iddef{6} & $\bfe^1_1\otimes \bfe^2_1 \otimes \bfe^3_1 + \bfe^1_2\otimes \bfe^2_2 \otimes \bfe^3_2 $ & \footnotesize{$\left(\begin{smallmatrix} u & \\ & v\end{smallmatrix}\right)$} & 2 & 2 & \Cref{ssec:orbit_6}\\
    
      \iddef{7} & $\bfe^1_1\otimes(\bfe^2_1 \otimes \bfe^3_1 + \bfe_2^2\otimes\bfe_3^3) + \bfe_2^1\otimes\bfe_1^2\otimes\bfe_2^3$ & \footnotesize{$\left(\begin{smallmatrix} u & v & \\ & & u\end{smallmatrix}\right)$} & 3 & 3 & \Cref{ssec:orbit_7_8_11_12} \\
    
        \iddef{8} & $\bfe^1_1\otimes(\bfe^2_1 \otimes \bfe^3_1 + \bfe_2^2\otimes\bfe_2^3) + \bfe_2^1\otimes(\bfe_1^2\otimes\bfe_2^3 + \bfe_2^2 \otimes \bfe_3^3)$ & \footnotesize{$\left(\begin{smallmatrix} u & v & \\ & u & v\end{smallmatrix}\right)$} & 3 & 3 & \Cref{ssec:orbit_7_8_11_12}\\
    
       \iddef{9} & $\bfe^1_1\otimes(\bfe^2_1 \otimes \bfe^3_1 + \bfe_2^2\otimes\bfe_3^3) + \bfe_2^1\otimes(\bfe_1^2\otimes\bfe_2^3 + \bfe_2^2 \otimes \bfe_4^3)$ & \footnotesize{$\left(\begin{smallmatrix} u & v & & \\ & & u & v\end{smallmatrix}\right)$} & 4 & 4 & \Cref{ssec:orbit_9}\\
    
       \iddef{10} & $\bfe^1_1\otimes(\bfe^2_1 \otimes \bfe^3_1 + \bfe_2^2\otimes\bfe_2^3 + \bfe_3^2\otimes\bfe_3^3)$ & \footnotesize{$\left(\begin{smallmatrix} u & &  \\ & u &   \\ & & u \end{smallmatrix}\right)$} & 3 & 3 & \Cref{prop:matrices}\\
    
        \iddef{11}  & $\bfe^1_1\otimes(\bfe^2_1 \otimes \bfe^3_1 + \bfe_3^2\otimes\bfe_2^3) + \bfe_2^1 \otimes \bfe_2^2\otimes\bfe_1^3$ & \footnotesize{$\left(\begin{smallmatrix} u &  \\ v &   \\ & u \end{smallmatrix}\right)$} & 3 & 3 & \Cref{ssec:orbit_7_8_11_12}\\

     \iddef{12} & $\bfe^1_1\otimes(\bfe^2_1 \otimes \bfe^3_1 + \bfe_2^2\otimes\bfe_2^3) + \bfe_2^1 \otimes (\bfe_2^2\otimes\bfe_1^3 + \bfe_3^2 \otimes \bfe_2^3)$ & \footnotesize{$\left(\begin{smallmatrix} u &  \\ v & u \\ & v \end{smallmatrix}\right)$} & 3 & 3 & \Cref{ssec:orbit_7_8_11_12}\\

       \iddef{13}  &  $\bfe^1_1\otimes(\bfe^2_1 \otimes \bfe^3_1 + \bfe_2^2\otimes\bfe_3^3) + \bfe_2^1 \otimes (\bfe_1^2\otimes\bfe_2^3 + \bfe_3^2 \otimes \bfe_3^3)$ & \footnotesize{$\left(\begin{smallmatrix} u & v &  \\ & & u \\ & & v \end{smallmatrix}\right)$} & 3 & 4 & \Cref{ssec:n13}\\

    \iddef{14} & $\bfe^1_1\otimes(\bfe^2_1 \otimes \bfe^3_1 + \bfe_2^2\otimes\bfe_2^3) + \bfe_2^1 \otimes  \bfe_3^2 \otimes \bfe_3^3$ & \footnotesize{$\left(\begin{smallmatrix} u & &  \\  & u & \\ & & v \end{smallmatrix}\right)$} & 3 & 3 & \Cref{ssec:23n_rank3_border3}\\

     \iddef{15} & $\bfe^1_1\otimes(\bfe^2_1 \otimes \bfe^3_1 + \bfe_2^2\otimes\bfe_2^3 + \bfe_3^2 \otimes \bfe_3^3) + \bfe_2^1 \otimes  \bfe_1^2 \otimes \bfe_2^3$ & \footnotesize{$\left(\begin{smallmatrix} u & v &  \\  & u & \\ & & u \end{smallmatrix}\right)$} & 3 & 4 & \Cref{ssec:n15}\\

       \iddef{16} & $\bfe^1_1\otimes(\bfe^2_1 \otimes \bfe^3_1 + \bfe_2^2\otimes\bfe_2^3 + \bfe_3^2 \otimes \bfe_3^3) + \bfe_2^1 \otimes (\bfe_1^2 \otimes \bfe_2^3 + \bfe_2^2 \otimes \bfe_3^3)$ & \footnotesize{$\left(\begin{smallmatrix} u & v &  \\  & u & v \\ & & u \end{smallmatrix}\right)$} & 3 & 4 & \Cref{ssec:n16}\\

       \iddef{17} & $\bfe^1_1\otimes(\bfe^2_1 \otimes \bfe^3_1 + \bfe_2^2\otimes\bfe_2^3) + \bfe_2^1 \otimes (\bfe_1^2 \otimes \bfe_2^3 + \bfe_3^2 \otimes \bfe_3^3)$ & \footnotesize{$\left(\begin{smallmatrix} u & v &  \\  & u & \\ & & v \end{smallmatrix}\right)$} & 3 & 4 & \Cref{ssec:n17}\\

     \iddef{18} & $\bfe^1_1\otimes(\bfe^2_1 \otimes \bfe^3_1 + \bfe_2^2\otimes\bfe_2^3) + \bfe_2^1 \otimes (\bfe_2^2 \otimes \bfe_2^3 + \bfe_3^2 \otimes \bfe_3^3)$ & \footnotesize{$\left(\begin{smallmatrix} u & &  \\  & u+v & \\ & & v \end{smallmatrix}\right)$} & 3 & 3 & \Cref{ssec:23n_rank3_border3} \\

       \iddef{19} & $\bfe^1_1\otimes(\bfe^2_1 \otimes \bfe^3_1 + \bfe_2^2\otimes\bfe_2^3 + \bfe_3^2 \otimes \bfe_4^3) + \bfe_2^1 \otimes (\bfe_1^2 \otimes \bfe_2^3 + \bfe_2^2 \otimes \bfe_3^3)$ & \footnotesize{$\left(\begin{smallmatrix} u & v &  \\  & u & v \\ & & & u \end{smallmatrix}\right)$} & 4 & 4 & \Cref{ssec:n19}\\

       \iddef{20} & $\bfe^1_1\otimes(\bfe^2_1 \otimes \bfe^3_1 + \bfe_2^2\otimes\bfe_3^3 + \bfe_3^2 \otimes \bfe_4^3) + \bfe_2^1 \otimes \bfe_1^2 \otimes \bfe_2^3$ & \footnotesize{$\left(\begin{smallmatrix} u & v &  \\  & & u & \\ & & & u \end{smallmatrix}\right)$} & 4 & 4 & \Cref{ssec:n20}\\

       \iddef{21} & $\bfe^1_1\otimes(\bfe^2_1 \otimes \bfe^3_1 + \bfe_2^2\otimes\bfe_3^3 + \bfe_3^2 \otimes \bfe_4^3) + \bfe_2^1 \otimes (\bfe_1^2 \otimes \bfe_2^3 + \bfe_2^2 \otimes \bfe_4^3)$ & \footnotesize{$\left(\begin{smallmatrix} u & v &  \\  & & u & v \\ & & & u \end{smallmatrix}\right)$} & 4 & 5 & \Cref{ssec:n21}\\

       \iddef{22} & $\bfe^1_1\otimes(\bfe^2_1 \otimes \bfe^3_1 + \bfe_2^2\otimes\bfe_3^3) + \bfe_2^1 \otimes (\bfe_1^2 \otimes \bfe_2^3 + \bfe_3^2 \otimes \bfe_4^3)$ & \footnotesize{$\left(\begin{smallmatrix} u & v &  \\  & & u & \\ & & & v \end{smallmatrix}\right)$} & 4 & 4 & \Cref{ssec:n22}\\

      \iddef{23} & $\bfe^1_1\otimes(\bfe^2_1 \otimes \bfe^3_1 + \bfe_2^2\otimes\bfe_2^3 + \bfe_3^2 \otimes \bfe_3^3) + \bfe_2^1 \otimes (\bfe_1^2 \otimes \bfe_2^3 + \bfe_2^2 \otimes \bfe_3^3 + \bfe_3^2 \otimes\bfe_4^3)$ & \footnotesize{$\left(\begin{smallmatrix} u & v &  \\  & u & v & \\ & & u & v \end{smallmatrix}\right)$} & 4 & 4 & \Cref{ssec:n23}\\

     \iddef{24} & $\bfe^1_1\otimes(\bfe^2_1 \otimes \bfe^3_1 + \bfe_2^2\otimes\bfe_3^3 + \bfe_3^2 \otimes \bfe_5^3) + \bfe_2^1 \otimes (\bfe_1^2 \otimes \bfe_2^3 + \bfe_2^2 \otimes \bfe_4^3)$ & \footnotesize{$\left(\begin{smallmatrix} u & v &  \\  & & u & v & \\ & & & & u \end{smallmatrix}\right)$} & 5 & 5 & \Cref{ssec:n24_n25}\\

       \iddef{25} & $\bfe^1_1\otimes(\bfe^2_1 \otimes \bfe^3_1 + \bfe_2^2\otimes\bfe_2^3 + \bfe_3^2 \otimes \bfe_4^3) + \bfe_2^1 \otimes (\bfe_1^2 \otimes \bfe_2^3 + \bfe_2^2 \otimes \bfe_3^3 + \bfe_3^2 \otimes\bfe_5^3)$ & \footnotesize{$\left(\begin{smallmatrix} u & v &  \\  & u & v & \\ & & & u & v \end{smallmatrix}\right)$} & 5 & 5 & \Cref{ssec:n24_n25}\\

     \iddef{26} & $\bfe^1_1\otimes(\bfe^2_1 \otimes \bfe^3_1 + \bfe_2^2\otimes\bfe_3^3 + \bfe_3^2 \otimes \bfe_5^3) + \bfe_2^1 \otimes (\bfe_1^2 \otimes \bfe_2^3 + \bfe_2^2 \otimes \bfe_4^3 + \bfe_3^2 \otimes\bfe_6^3)$ & \footnotesize{$\left(\begin{smallmatrix} u & v &  \\  & & u & v & \\ & & & & u & v \end{smallmatrix}\right)$} & 6 & 6 & \Cref{ssec:n26}\\
    ~ \\ 
    \caption{The 26 normal forms in $\bbC^2 \otimes \bbC^b \otimes \bbC^c$ with $b\leq3$ or $c\leq 6$.}
\end{longtable}
}}

\section{Matrices}\label{sec:matrices}
As a first example of computation, we describe the case of matrices, generalizing \cite[Corollary 3.2]{CCO} which considered only the symmetric case. This section covers the orbits n.\idref{1}, \idref{2}, \idref{3}, \idref{4} and n.\idref{10} of \Cref{table:kronecker}.

Let $A \in  \Mat_{m\times n}(\bbC)$ be a $m\times n$ matrix of rank $r$ and consider its thin Singular Value Decomposition (SVD) $A = U \Sigma V^*$, where $U\in \Mat_{m\times r}(\mathbb{C})$ and $V\in \Mat_{n\times r}(\mathbb{C})$ are unitary matrices, $V^*$ denotes the transpose conjugate and $\Sigma = {\rm diag}(\sigma_1,\ldots,\sigma_r) \in \Mat_{r\times r}(\mathbb{C})$ is a diagonal matrix whose diagonal contains the ordered singular values $\sigma_1 \geq \cdots \geq \sigma_r$ of $A$. Then, 
\[
	A = \sum_{i=1}^r \sigma_i \bfu_i \otimes \bfv_i
\]
where $\bfu_i\in \mathbb{C}^m$ and $\bfv_i\in \mathbb{C}^n$ indicate the $i$th column and row of the matrices $U$ and $V^*$, respectively. In particular, we have that $A$ is concise in the subspace
\begin{equation*}
    \bbU \otimes \bbV :=\langle \bfu_1,\ldots,\bfu_r \rangle \otimes \langle \bfv_1,\ldots,\bfv_r \rangle  \subseteq \bbC^{m} \otimes \bbC^n.
\end{equation*} 
The matrix $A^+ := V\Sigma^{-1} U^*$ is the \textit{pseudoinverse} of $A$, where $\Sigma^{-1} := {\rm diag}(\frac{1}{\sigma_1},\ldots,\frac{1}{\sigma_r}) \in \Mat_{r\times r}(\mathbb{C})$.

\begin{proposition}\label{prop:matrices} Let $\bbU \otimes \bbV$ be the concision space of $A \in \Mat_{m\times n}(\bbC)$. The decomposition locus of  $A$ is
\[
	\mathcal{D}_A = \left\{ (\bf{u}, \bf{v}) \in \bbU\times \bbV \subseteq \bbC^m \times \bbC^n ~:~ \bf{v}^T A^+ \bf{u} \neq 0\right\}.
\]
\end{proposition}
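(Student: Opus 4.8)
The plan is to reduce, via autarky, to pairs $(\mathbf{u},\mathbf{v})$ in the concision space $\bbU\times\bbV$, and then to analyze directly the kernel of the perturbed matrix $A-\lambda\mathbf{u}\mathbf{v}^{T}$ (writing the rank-one tensor $\mathbf{u}\otimes\mathbf{v}$ as $\mathbf{u}\mathbf{v}^{T}$). By \cite[Proposition 3.1.3.1]{L} (or \cite{BB17}) we have $\mathcal{D}_{A}\subseteq\bbU\times\bbV$, so it suffices to decide membership for $(\mathbf{u},\mathbf{v})\in\bbU\times\bbV$; the cases $\mathbf{u}=\mathbf{0}$ or $\mathbf{v}=\mathbf{0}$ are immediate, since then $A-\lambda\mathbf{u}\mathbf{v}^{T}=A$ has rank $r=\rk(A)$ for every $\lambda$, matching $\mathbf{v}^{T}A^{+}\mathbf{u}=0$. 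So I would assume $\mathbf{u},\mathbf{v}\neq\mathbf{0}$ from here on.

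First I would pin down the possible ranks of $A-\lambda\mathbf{u}\mathbf{v}^{T}$. For every $\lambda$, subadditivity of rank gives $\rk(A-\lambda\mathbf{u}\mathbf{v}^{T})\geq\rk(A)-\rk(\lambda\mathbf{u}\mathbf{v}^{T})\geq r-1$; on the other hand the column space of $A-\lambda\mathbf{u}\mathbf{v}^{T}$ is contained in $\bbU$ (both $A$ and $\mathbf{u}\mathbf{v}^{T}$ have column space inside $\bbU$), so the rank is at most $r$. Hence $(\mathbf{u},\mathbf{v})\in\mathcal{D}_{A}$ exactly when some $\lambda$ makes $\dim\ker(A-\lambda\mathbf{u}\mathbf{v}^{T})>n-r=\dim\ker A$. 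From the SVD I would record that $\ker A=\bbV^{\perp}$ for the standard bilinear pairing — because $A\mathbf{x}=\sum_{i}\sigma_{i}\mathbf{u}_{i}(\mathbf{v}_{i}^{T}\mathbf{x})$ with the $\mathbf{u}_{i}$ independent and $\sigma_{i}\neq0$ — so that $\mathbf{v}^{T}\mathbf{w}=0$ for all $\mathbf{v}\in\bbV$, $\mathbf{w}\in\ker A$, and in particular $\ker A\subseteq\ker(A-\lambda\mathbf{u}\mathbf{v}^{T})$ for every $\lambda$. Thus the rank drops iff there is $\mathbf{x}\notin\ker A$ with $A\mathbf{x}=\lambda(\mathbf{v}^{T}\mathbf{x})\mathbf{u}$; such an $\mathbf{x}$ must satisfy $\mathbf{v}^{T}\mathbf{x}\neq0$ (otherwise $A\mathbf{x}=0$), and after rescaling I am asking for $\mathbf{x}$ with $\mathbf{v}^{T}\mathbf{x}=1$ and $A\mathbf{x}=\lambda\mathbf{u}$.

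Next I would solve this system using the pseudoinverse. Since $\mathbf{u}\in\bbU=\im A$ and $AA^{+}$ is the orthogonal projector onto $\bbU$, we have $A(A^{+}\mathbf{u})=\mathbf{u}$, so the solutions of $A\mathbf{x}=\lambda\mathbf{u}$ are exactly $\mathbf{x}=\lambda A^{+}\mathbf{u}+\mathbf{w}$ with $\mathbf{w}\in\ker A$. Pairing with $\mathbf{v}^{T}$ and using $\mathbf{v}^{T}\mathbf{w}=0$ from the previous step, the remaining constraint $\mathbf{v}^{T}\mathbf{x}=1$ collapses to $\lambda\,\mathbf{v}^{T}A^{+}\mathbf{u}=1$, which has a solution $\lambda$ iff $\mathbf{v}^{T}A^{+}\mathbf{u}\neq0$. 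Spelling it out: if $\mathbf{v}^{T}A^{+}\mathbf{u}\neq0$, take $\lambda=(\mathbf{v}^{T}A^{+}\mathbf{u})^{-1}$ and $\mathbf{x}=\lambda A^{+}\mathbf{u}$; then $(A-\lambda\mathbf{u}\mathbf{v}^{T})\mathbf{x}=\mathbf{0}$ with $\mathbf{x}\notin\ker A$, so $\rk(A-\lambda\mathbf{u}\mathbf{v}^{T})=r-1=\rk(A)-1$ and $(\mathbf{u},\mathbf{v})\in\mathcal{D}_{A}$; if $\mathbf{v}^{T}A^{+}\mathbf{u}=0$, no $\lambda$ works, so $(\mathbf{u},\mathbf{v})\in\mathcal{F}_{A}$. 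This yields the stated description of $\mathcal{D}_{A}$.

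The one thing I expect to need care with is keeping the bilinear pairing $\mathbf{v}^{T}(\cdot)$ separate from the Hermitian one: it is $\ker A=\bbV^{\perp}$ for the transpose form (not the conjugate form) that drives the argument, and correspondingly the relevant scalar is $\mathbf{v}^{T}A^{+}\mathbf{u}$ rather than $\mathbf{v}^{*}A^{+}\mathbf{u}$; since the final answer is phrased through the intrinsic object $A^{+}$, no choice of SVD actually enters. A fully equivalent alternative would restrict everything to the concise $r\times r$ block — in the SVD bases $A\leftrightarrow\Sigma$ and $\mathbf{u}\otimes\mathbf{v}\leftrightarrow\mathbf{c}\mathbf{d}^{T}$ — and invoke the matrix determinant lemma $\det(\Sigma-\lambda\mathbf{c}\mathbf{d}^{T})=\det(\Sigma)\bigl(1-\lambda\,\mathbf{d}^{T}\Sigma^{-1}\mathbf{c}\bigr)$, noting $\mathbf{d}^{T}\Sigma^{-1}\mathbf{c}=\mathbf{v}^{T}A^{+}\mathbf{u}$; the same case split then finishes the proof.
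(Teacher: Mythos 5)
Your proof is correct and arrives at the same characterization, but it takes a genuinely different route from the paper's argument. The paper first reduces to the diagonal case $A \leftrightarrow \Sigma$ via the SVD change of coordinates, computes $\det(\Sigma + \lambda \bfx \otimes \bfy) = \det(\Sigma)(1 + \lambda\, \bfy^T\Sigma^{-1}\bfx)$ by the matrix determinant lemma, reads off $\calD_\Sigma$, and then transports back to $\calD_A$ using the orbit-invariance observation of Remark~\ref{rmk:change_coordinates} — this is precisely the ``equivalent alternative'' you sketch in your last paragraph. Your main argument instead works directly with $A$: it bounds $\rk(A - \lambda\bfu\bfv^T)$ between $r-1$ and $r$, identifies $\ker A = \bbV^\perp$ (bilinear pairing), shows $\ker A$ always persists in $\ker(A - \lambda\bfu\bfv^T)$, and reduces the rank-drop question to solvability of the affine system $A\bfx = \lambda\bfu$, $\bfv^T\bfx = 1$, which you solve cleanly with the pseudoinverse. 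The payoff of your approach is that it avoids the determinant lemma and the explicit coordinate change entirely, makes the kernel structure transparent, and produces the witnessing vector $\bfx = \lambda A^+\bfu$ explicitly, which is useful if one actually wants to write down the rank-deficient perturbation; the payoff of the paper's approach is brevity and that the ``reduce to normal form via the group action'' pattern is reused verbatim for all the tensor orbits in the later sections. Your attention to the bilinear-versus-Hermitian distinction (so that $\ker A = \bbV^\perp$ for the transpose form, and $\bfv^T A^+\bfu$ rather than $\bfv^* A^+\bfu$ is the relevant scalar) is exactly the right point to flag, and you resolve it correctly.
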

\begin{proof}
As mentioned in the introduction, by either concision for tensors (cf. \cite[Proposition 3.1.3.1]{L}) or autarky property (cf. \cite{BB17}), $\calD_A \subseteq \bbU \times \bbV$. We first compute the decomposition locus of the rank-$r$ matrix $\Sigma \in \Mat_{r\times r}(\bbC)$ from the thin SVD of $A = U \Sigma V^*$. By linear algebra computation,
\[
	\det(\Sigma + \lambda \bfx \otimes \bfy) = \det(\Sigma)(1+\lambda \bfy^T \Sigma^{-1} \bfx) = \sigma_1\cdots \sigma_r + \lambda \sum_{i=1}^r \frac{1}{\sigma_i}x_iy_i.
\]
Hence, $\calD_\Sigma = \{(\bfx,\bfy) \in \bbC^r \times \bbC^r ~:~ \bfy^T \Sigma^{-1} \bfx \neq 0\}.$ Now, since $A^+ = V\Sigma^{-1}U^*$, we consider the corresponding change of variables to obtain $\calD_A$ from $\calD_\Sigma$ as explained in \Cref{rmk:change_coordinates}, i.e.,
\begin{align*}
	\calD_A &= \{(\bfu,\bfv) \in \bbU \times \bbV ~:~ (U^*\bfu,V^T\bfv ) \in \calD_\Sigma \} \\
	& =   \{(\bfu,\bfv) \in \bbU \times \bbV ~:~ \bfv^T V \Sigma^{-1} U^*\bfu \neq 0 \} = \{(\bfu,\bfv) \in \bbU \times \bbV ~:~ \bfv^T A^+ \bfu \neq 0 \}. \qedhere
\end{align*}
\end{proof}

\section{Tangential tensors}\label{sec:tangential}
We consider here \textit{tangential tensors}, namely tensors lying on the tangential variety of the Segre variety but not on the Segre variety itself. In the literature of quantum information, these are known as (tensors in the orbits of) {\it $W$-states}, see e.g. \cite{ballico2019partially} and the references therein. This case corresponds to the orbit n.\idref{5} from \Cref{table:kronecker}. 
\begin{definition}
    The \textbf{tangential variety} of the Segre variety is 
    \[
        \tau(X_\bfn) = \overline{\left\{[T] ~:~ \exists \, q \in X_\bfn \text{ such that } [T] \in T_qX_\bfn\right\}},
    \]
    where $T_qX_\bfn$ denotes the tangent space to the Segre variety at $q$.
\end{definition}
If $Q = \bfv^1 \otimes \cdots \otimes \bfv^k \in X_\bfn$ then it is immediate to see that the all tensors in the tangent space at $q = [Q] \in X_\bfn$ can be written as $T = \sum_{i=1}^k \bfv^1 \otimes \cdots \otimes \bfw^i \otimes \cdots \otimes \bfv^k$, for some $\bfw^i \in \bbC^{n_i}$. In particular, $T \in \Sub_{2^k}$.

\begin{remark}\label{remark:tg}
    Let $q \in X_{2^k}$ and $[T] \in T_q(X_{2^k}) \smallsetminus \{q\}$. Then, there exists a $0$-dimensional scheme of length two $J \subset (\bbP^1)^{\times k}$ supported at $q$ such that $[T] \in \langle \nu_{2^k}(J) \rangle$. Such a scheme is classically called a {\it $2$-jet} (cf. \cite{AH1997jets, Bernardi2009982}). By \cite[Theorem 1]{BB13}, the rank of $T$ is determined by the conciseness of $J$; in particular, since we are always assuming that $T$ is concise, $\rk(T) = k$. 
\end{remark}

We prove that the only forbidden point for tangential tensors is the tangency point. This generalizes the analogous result for symmetric tensors in \cite{CCO}.

\begin{theorem}\label{thm:tangential}
    Fix $q \in X_\bfn$, $\bfn = (n_1,\ldots,n_k)$ with $k \geq 3$. Let $[T] \in T_q X_\bfn \smallsetminus \{q\}$. Then, $\calF_T = \{q\}$.
\end{theorem}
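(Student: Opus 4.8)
The plan is to reduce everything to the normal form of a tangential tensor and then exhibit, for any rank-one tensor $P = \bfv^1 \otimes \cdots \otimes \bfv^k$ that is \emph{not} the tangency point $Q$, a scalar $\lambda$ and a minimal decomposition of $T - \lambda P$ of length $k-1$. By \Cref{remark:tg} we know $\rk(T) = k$, so it suffices to drop the rank by one. Up to the $\GL$-action (\Cref{rmk:change_coordinates}) we may assume $q = [\bfe_1^1 \otimes \cdots \otimes \bfe_1^k]$ and
\[
    T = \sum_{i=1}^k \bfe_1^1 \otimes \cdots \otimes \bfe_2^i \otimes \cdots \otimes \bfe_1^k,
\]
a $2$-jet supported at $q$ living in $\Sub_{2^k}$; so by autarky we may work entirely inside $(\bbC^2)^{\otimes k}$, where a rank-one tensor is $P = \bfv^1 \otimes \cdots \otimes \bfv^k$ with each $\bfv^i = a_i \bfe_1^i + b_i \bfe_2^i$.

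The first step is to show the tangency point $q$ itself lies in $\calF_T$: if $P = \bfe_1^1\otimes\cdots\otimes\bfe_1^k$ is a scalar multiple of $Q$, then $T - \lambda Q$ is still a concise tensor in $(\bbC^2)^{\otimes k}$ (all flattenings remain full rank $2$ as the $\bfe_2$-parts are untouched), hence has rank $\geq k$ by the standard lower bound; so no $\lambda$ drops the rank and $q \in \calF_T$. The second, main step is the converse: for $P$ not proportional to $Q$, produce the decomposition. The natural idea — which also yields \Cref{prop:tangent_alldifferent} — is to view $T$ as a limit of $k$ rank-one points: writing $\bfe_1^i(t) := \bfe_1^i + t\bfe_2^i$, one has $T = \frac{d}{dt}\big|_{t=0}\big(\bfe_1^1(t)\otimes\cdots\otimes\bfe_1^k(t)\big)$, i.e. $T$ sits on the tangent line to the rational normal-type curve $t \mapsto \bfe_1^1(t)\otimes\cdots\otimes\bfe_1^k(t)$. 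Then for a generic rank-one $P$ one looks for $k-1$ points on (a deformed version of) this curve together with the correct multiplicity/derivative data whose span contains $T - \lambda P$; concretely, this becomes a small apolarity / linear-algebra computation: find $\lambda$ and vectors $\bfw_1^i,\dots,\bfw_{k-1}^i \in \bbC^2$ with $T - \lambda P = \sum_{j=1}^{k-1}\bfw_j^1\otimes\cdots\otimes\bfw_j^k$, and argue that the resulting system is solvable precisely when $P \not\sim Q$.

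The cleanest route for this step is probably to split by how many of the coordinates $b_i$ vanish. If all $b_i \neq 0$ (the ``all different'' case of \Cref{prop:tangent_alldifferent}), rescale so $P = (\bfe_1^1 + \bfe_2^1)\otimes\cdots\otimes(\bfe_1^k+\bfe_2^k)$; then $T - \lambda P$ should decompose as a combination of $k-1$ jets/points obtained from the curve $t\mapsto \bigotimes(\bfe_1^i + t\bfe_2^i)$ evaluated at the $(k-1)$-st roots of a suitable constant depending on $\lambda$, and one checks an interpolation matrix (a Vandermonde-type determinant) is nonsingular for a good choice of $\lambda$. If some but not all $b_i$ vanish, say $b_i = 0$ for $i \in S \subsetneq \{1,\dots,k\}$ with $S \neq \emptyset$, then in those factors $\bfv^i \sim \bfe_1^i$, and subtracting $\lambda P$ only affects the ``diagonal'' summands indexed by $i \notin S$; a direct computation shows one can cancel exactly one such summand and absorb the rest, again giving a length-$(k-1)$ decomposition. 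Either way, once a valid $\lambda$ is found the rank drops to at most $k-1$, and since $\rk(T - \lambda P) \geq \rk(T) - 1 = k-1$ trivially, equality holds and $P \in \calD_T$.

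\textbf{Main obstacle.} The delicate point is the bookkeeping in the generic case: one must verify that the constructed $k-1$ rank-one terms are genuinely linearly independent and sum to $T - \lambda P$ for \emph{some} admissible $\lambda$, i.e. that the relevant interpolation/Vandermonde determinant does not vanish identically in the parameters. Equivalently, one is checking that the $2$-jet $T$ becomes, after the rank-one perturbation $-\lambda P$, an element of the span of a length-$(k-1)$ decomposable scheme — an apolarity condition that could a priori fail for special $P$. The proof must show the only such failure is $P \sim Q$; I expect this to follow by tracking the unique length-two scheme $J$ of \Cref{remark:tg} and showing that removing a point of a $(k-1)$-point decomposition from $T$ forces the removed point to be $q$ only when $P$ itself is $q$.
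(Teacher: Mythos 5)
Your overall strategy mirrors the paper's (normalize, show the tangency point is forbidden, then produce explicit length-$(k-1)$ decompositions for every other rank-one $P$, splitting according to how many factors of $P$ coincide with those of $Q$), but there are two genuine gaps.

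First, your argument that $q\in\calF_T$ is incorrect as written. You claim that since $T-\lambda Q$ remains concise in $(\bbC^2)^{\otimes k}$, it has rank $\geq k$ ``by the standard lower bound.'' There is no such bound: conciseness in $(\bbC^2)^{\otimes k}$ only forces rank $\geq 2$ (e.g.\ $\bfe_1^{\otimes 3}+\bfe_2^{\otimes 3}$ is concise of rank $2$ while $k=3$). What the paper's \Cref{lemma:tangency_is_forbidden} actually uses is that subtracting a multiple of $Q$ keeps the tensor on the line $\langle[T],q\rangle = \langle \nu_{2^k}(J)\rangle$, so $T-\lambda Q$ is again a \emph{concise tangential} tensor, and then \cite[Theorem 1]{BB13} (via \Cref{remark:tg}) gives rank exactly $k$. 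You need the tangentiality, not just the conciseness.

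Second, the case in which some but not all of the $b_i$ vanish is not actually proven; ``a direct computation shows one can cancel exactly one such summand and absorb the rest'' is the very point at issue and is precisely the ``main obstacle'' you flag without resolving. The difficulty is sharpest when only one or two factors of $P$ differ from those of $Q$: then the reduction to a smaller tangential problem lands you in a situation where \Cref{prop:tangent_alldifferent} (your Vandermonde/interpolation argument) does not apply, because the residual rank-one tensor still shares factors with the residual tangency point. The paper handles this by first establishing a separate base case for $k=3$ via the Cayley hyperdeterminant (\Cref{prop:tangent_k=3}), which in particular treats rank-one tensors sharing one or two factors with $Q$, and then in the general proof factors out the common $\bfe_1^i$'s and reduces either to \Cref{prop:tangent_alldifferent} (when $k-m\geq 3$) or to \Cref{prop:tangent_k=3} (when $k-m<3$). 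Your proposal is missing any such base case, so the induction/reduction cannot close.
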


The latter theorem gives a very immediate way to check if a rank-one tensor is forbidden for a tangential tensor. Indeed, if $T$ is tangential tensor at $q = [Q]$, then the image of all flattenings $\flatt_i(T)$ contains only one rank-one tensor: that is exactly the image by $\flatt_i(T)$ of the annihilator by contraction of the $i$-th factor of $Q$ in $(\bbC^{n_i})^\vee$.  See \Cref{algo:tangential}.

\begin{algorithm}[H]
    \caption{Test if a rank-$1$ tensor is in the forbidden locus of a tangential tensor}
    \flushleft{
    \textbf{Input:} }
    \begin{itemize}
        \item Tangential tensor $T \in \bbC^{n_1}\otimes\cdots\otimes\bbC^{n_k}$ with $k \geq 2$.
        \item Rank-one tensor $P = \bfp^1\otimes\cdots\otimes\bfp^k \in \bbC^{n_1}\otimes\cdots\otimes\bbC^{n_k}$.
    \end{itemize}
    \flushleft{
    \textbf{Output:} Whether $P \in \calF_T$ or $P \not\in \calF_T$.
    }
    
    \flushleft{
    \textbf{Procedure:}
    }
    \begin{itemize}
        \item For any $i = 1,\ldots,k$, let $(\bfp^i)' \in (\bbC^{n_i})^\vee$ be an annihilator of $\bfp^i$ by contraction in $\bbC^{n_i}$.
        \item If, for any $i = 1,\ldots,k$, $\rk \left(\flatt_i(T)((\bfp^i)')\right) > 1$, then $P \not\in \calF_T$; otherwise, $P \in \calF_T$.
    \end{itemize}
    \label{algo:tangential}
\end{algorithm}

First of all we show that the tangency point is indeed forbidden.

\begin{lemma}\label{lemma:tangency_is_forbidden}
    Fix $q \in X_\bfn$, $\bfn = (n_1,\ldots,n_k)$ with $k \geq 3$. Let $[T] \in T_q X_\bfn \smallsetminus \{q\}$. Then, $q \in \calF_T$.
\end{lemma}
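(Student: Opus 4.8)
The plan is to show that subtracting \emph{any} scalar multiple of $Q$ from $T$ produces again a concise tangential tensor at the very same point $q$, so that its rank is forced to equal $k = \rk(T)$ by \Cref{remark:tg}; this says precisely that $q \in \calF_T$.

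Concretely, write $q = [Q]$ with $Q = \bfv^1\otimes\cdots\otimes\bfv^k$. Since $[T]\in T_qX_\bfn\smallsetminus\{q\}$ and $T$ is concise, after splitting off the $Q$-component in each slot we may write $T = \sum_{i=1}^k \bfv^1\otimes\cdots\otimes\bfw^i\otimes\cdots\otimes\bfv^k$ with each $\bfw^i$ linearly independent of $\bfv^i$; in particular every flattening image is $2$-dimensional, so $n_i = 2$ for all $i$ and we may assume $\bfn = 2^k$ (if some $n_i = 1$ the tensor is effectively of lower order and the same argument, ending in \Cref{prop:matrices} when only two factors survive, applies). The first step is then the elementary identity
\[
    T-\lambda Q \;=\; \sum_{i=1}^k \bfv^1\otimes\cdots\otimes\Bigl(\bfw^i-\tfrac{\lambda}{k}\bfv^i\Bigr)\otimes\cdots\otimes\bfv^k,
    \qquad \lambda\in\bbC,
\]
which follows by expanding the right-hand side and using $\sum_{i=1}^k \bfv^1\otimes\cdots\otimes\bfv^i\otimes\cdots\otimes\bfv^k = kQ$.

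Next I would verify the hypotheses of \Cref{remark:tg} for $T-\lambda Q$. It is nonzero and not proportional to $Q$, since otherwise $T$ itself would be a scalar multiple of $Q$, contradicting $\rk(T)=k\ge 3$; hence $[T-\lambda Q]\in T_qX_{2^k}\smallsetminus\{q\}$ by the displayed identity. Moreover $T-\lambda Q$ is concise, because each $\bfw^i-\tfrac{\lambda}{k}\bfv^i$ is still linearly independent of $\bfv^i$, so the associated length-two jet at $q$ moves in every factor. \Cref{remark:tg} then yields $\rk(T-\lambda Q)=k=\rk(T)$ for every $\lambda\in\bbC$, i.e.\ $q=[Q]\in\calF_T$. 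The argument is short; the only place needing a little care is the reduction to the concise setting $\bfn=2^k$ together with the accompanying check that $T-\lambda Q$ stays concise, after which everything follows from the one-line identity above and \Cref{remark:tg}.
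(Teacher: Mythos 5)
Your proof is correct and follows essentially the same strategy as the paper's: reduce without loss of generality to the concise case $\bfn=2^k$, observe that $T-\lambda Q$ remains a concise tangential tensor at $q$ for every $\lambda\in\bbC$, and invoke \Cref{remark:tg} to conclude $\rk(T-\lambda Q)=k=\rk(T)$. The only difference is presentational: you verify conciseness of $T-\lambda Q$ by an explicit coordinate identity, whereas the paper argues abstractly that the whole line $\langle[T],q\rangle$ coincides with $\langle\nu_{2^k}(J)\rangle$ for the concise $2$-jet $J$, so both invariance of conciseness and tangentiality are automatic.
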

\begin{proof}
    Without loss of generalities, we may assume $\bfn = 2^k$ and $T$ concise in $(\bbC^2)^{\otimes k}$. In particular, the $2$-jet $J$ supported at $q$ such that $[T] \in \langle \nu_{2^k}(J)\rangle$ as in \Cref{remark:tg} is also concise. Therefore, if $q = [Q]$, since $\langle \nu_{2^k}(J)\rangle$ is exactly the line $\langle [T],q\rangle$, then the tensor $T-\lambda Q$ is also tangential and concise for any $\lambda \in \bbC$. In particular, $\rk(T-\lambda Q) = \rk(T)$ for any $\lambda$ and $q \in \calF_T$.
\end{proof}
To finish the proof of \Cref{thm:tangential} it remains to show that $Q$ is the only point in $\calF_T $. We distinguish the case $k=3$ from all the others.

\begin{proposition}\label{prop:tangent_k=3}
    \Cref{thm:tangential} holds for $k = 3$. 
\end{proposition}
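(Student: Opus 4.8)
The plan is to reduce to the concise case in $(\bbC^2)^{\otimes 3}$ and then argue directly that every rank-one tensor other than $Q$ drops the rank of $T$ from $3$ to $2$. By \Cref{rmk:change_coordinates} and \Cref{remark:tg}, we may assume $\bfn = 2^3$, that $T$ is concise, $\rk(T) = 3$, and that $T$ lies in the $3$-dimensional tangent space $T_q X_{2^3}$ at $q = [\bfv^1\otimes\bfv^2\otimes\bfv^3]$. Normalizing coordinates so that $\bfv^i = \bfe_1^i$, we may take $T$ to be the normal form of orbit n.\idref{5}, i.e. $T = \bfe_1^1\otimes\bfe_1^2\otimes\bfe_1^3 + \bfe_1^1\otimes\bfe_2^2\otimes\bfe_2^3 + \bfe_2^1\otimes\bfe_1^2\otimes\bfe_2^3$ (after the obvious relabeling of the third factor), so $Q = \bfe_1^1\otimes\bfe_1^2\otimes\bfe_1^3$. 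By \Cref{lemma:tangency_is_forbidden} we already know $q \in \calF_T$, so it remains to show that for every $P = \bfp^1\otimes\bfp^2\otimes\bfp^3 \notin \{Q\}$ (as a point of $X_{2^3}$) there is a $\lambda$ with $\rk(T - \lambda P) = 2$.

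The key observation is the border-rank obstruction: since $T$ is concise in $(\bbC^2)^{\otimes 3}$ and has border rank $2$, a rank-one tensor $P$ can drop the rank of $T$ only if $T - \lambda P$ is still concise, i.e. $P$ lies in the "tangential" locus captured by all three flattenings. Concretely, for each $i$, $\flatt_i(T)$ has rank $2$ and its image contains a pencil of $2\times 2$ matrices whose unique (up to scalar) singular member is $\flatt_i(Q)$; the candidate $P$ must satisfy $\flatt_i(P) = c_i\,\flatt_i(Q)$ for some scalar after suitable projection, which forces $[\bfp^i] = [\bfv^i]$ for all but at most one index $i$. Thus, up to scaling and relabeling, the only candidates for forbidden points besides $Q$ are of the form $P = \bfe_1^1\otimes\bfe_1^2\otimes\bfp^3$ with $[\bfp^3]\neq[\bfe_1^3]$, or the two analogous families with the distinguished index in slot $1$ or $2$; and one must rule each of these out by exhibiting an explicit rank-$2$ decomposition of $T - \lambda P$. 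For instance, writing $\bfp^3 = \bfe_1^3 + t\,\bfe_2^3$ with $t \neq 0$ (the case $\bfp^3 = \bfe_2^3$ being the limit), choosing $\lambda = 1/t$ one checks $T - \lambda P = \bfe_1^1\otimes\bfe_2^2\otimes\bfe_2^3 + \bfe_2^1\otimes\bfe_1^2\otimes\bfe_2^3 = (\bfe_1^1\otimes\bfe_2^2 + \bfe_2^1\otimes\bfe_1^2)\otimes\bfe_2^3$, which is a product of a rank-$2$ matrix with a vector, hence has rank $2$; the remaining families are handled symmetrically. Finally, for a generic $P$ (with $[\bfp^i]\neq[\bfv^i]$ for at least two indices) conciseness of $T-\lambda P$ fails to be an obstruction and one produces the rank-$2$ decomposition directly — this is essentially \Cref{prop:tangent_alldifferent}, which I would invoke or prove in parallel: from almost any rank-one $P$ one builds a minimal decomposition of a $W$-state.

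The main obstacle I anticipate is the bookkeeping of the "near-tangency" candidates: showing cleanly that the flattening conditions force $[\bfp^i] = [\bfv^i]$ in at least two slots, and then checking that each one-parameter family of exceptions (distinguished index in slot $1$, $2$, or $3$) still admits a rank-drop. Because $k = 3$ and all factors are $2$-dimensional, each such check is a small explicit $2\times 2$ (or pencil-of-$2\times2$) computation — degenerate but numerous — and the symmetry among the three slots of the normal form n.\idref{5} is not perfect (the first factor plays a slightly different role), so one has to be careful not to collapse the three cases prematurely. Once the finitely many exceptional families are disposed of, the generic case is the easy part, and combining everything with \Cref{lemma:tangency_is_forbidden} yields $\calF_T = \{q\}$.
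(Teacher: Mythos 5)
There is a genuine gap in your key step. Your stated principle — that ``a rank-one tensor $P$ can drop the rank of $T$ only if $T-\lambda P$ is still concise'' — is backwards: if $T-\lambda P$ fails to be concise it falls into a $\Sub_{122}$-type space where the maximal rank is $2$, so the rank necessarily drops. The statement you actually need is that if $P\in\calF_T$ then $T-\lambda P$ must remain concise for all $\lambda$. But even this corrected necessary condition is strictly weaker than what you claim: conciseness of $T-\lambda P$ for all $\lambda$ does \emph{not} force $[\bfp^i]=[\bfv^i]$ in at least two slots. For a concrete counterexample (in the paper's normalization $T = \bfe^1_{2} \otimes \bfe^2_{1}\otimes \bfe^3_{1} + \bfe^1_{1}\otimes \bfe^2_{2}\otimes \bfe^3_{1} + \bfe^1_{1}\otimes \bfe^2_{1}\otimes \bfe^3_{2}$), take $P = \bfe_1^1\otimes\bfe_2^2\otimes\bfe_2^3$: all three flattenings of $T-\lambda P$ retain rank $2$ for every $\lambda$, so $T-\lambda P$ stays concise, yet $P$ lies on none of the three lines and is in fact a decomposition point. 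The ingredient your argument misses is the defining equation of $\tau(X_{222})$: a concise tensor in $(\bbC^2)^{\otimes 3}$ has rank $3$ precisely when the Cayley hyperdeterminant vanishes, so the inclusion of $\calF_T$ in the union of three lines should be extracted from the identity $H_{222}(T-\lambda P)\equiv 0$ in $\bbC[\lambda]$, not from rank conditions on flattenings. This is exactly what the paper does in \cref{eq:CayleyEval}--\cref{eq:tangent_inclusion}. (In the counterexample above, $H_{222}(T-\lambda P)=\lambda^2\neq 0$, which is what detects that the rank drops.)

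The remainder of your argument — handling each of the three one-parameter families by an explicit rank-$2$ decomposition and invoking \Cref{lemma:tangency_is_forbidden} for $q$ — is the same as the paper's and is correct in spirit, though your explicit computation contains a slip: with your normalization $T_5$ and $P=\bfe_1^1\otimes\bfe_1^2\otimes(\bfe_1^3+t\bfe_2^3)$, the choice $\lambda=1/t$ does not cancel the $\bfe_1^1\otimes\bfe_1^2\otimes\bfe_1^3$ term; taking $\lambda=1$ gives $T-P=(\bfe_1^1\otimes\bfe_2^2+\bfe_2^1\otimes\bfe_1^2-t\,\bfe_1^1\otimes\bfe_1^2)\otimes\bfe_2^3$, which has rank $2$ since the $2\times 2$ matrix in the first two slots has determinant $-1$. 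The structure of the proof is right; the inclusion step needs the hyperdeterminant, and without it the argument does not close.
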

\begin{proof}
    Recall that the tangential variety of $X_{222} \subset \bbP^7$ is defined by the vanishing of the Cayley equation (cf. \cite{GKZ}) and it is the closure of the set of rank-$3$ tensors in $\mathbb{P}(\mathbb{C}^2)^{\otimes 3}$. Let $p = [P] = [\bfa \otimes \bfb \otimes \bfc] \in X_{222}$, with $\bfa = a_1\bfe^1_{1} + a_2\bfe^1_{2}, \bfb = b_1\bfe^2_{1} + b_2\bfe^2_{2}, \bfc = c_1\bfe^3_{1} + c_2\bfe^3_{2}$. 
    If $p \in \calF_T$ then $T-\lambda P$ has rank three for all $\lambda \in \bbC$, in particular, the Cayley's hyperdeterminant of $T-\lambda P$ should vanish for all $\lambda \in \bbC$. Up to change of coordinates, we assume $Q = \bfe^1_{1} \otimes \bfe^2_{1} \otimes \bfe^3_{1}$ and 
    \[
        T = \bfe^1_{2} \otimes \bfe^2_{1}\otimes \bfe^3_{1} + \bfe^1_{1}\otimes \bfe^2_{2}\otimes \bfe^3_{1} + \bfe^1_{1}\otimes \bfe^2_{1}\otimes \bfe^3_{2} \in T_qX_{222}.
    \]
    By evaluating the Cayley's hyperdeterminant $H_{222}$ (cf. \cite[Ch. 14, Proposition 1.7]{GKZ}) at $T-\lambda P$, we get
    \begin{align}\label{eq:CayleyEval}
        H_{222}(T-\lambda P) = \lambda^2&\left(a_2^2b_2^2c_1^2+ 2a_2^2b_1b_2c_1c_2+2a_1a_2b_2^2c_1c_2+a_2^2b_1^2c_2^2+2a_1a_2b_1b_2c_2^2+a_1^2b_2^2c_2^2\right)\\ & -4\lambda a_2b_2c_2 \nonumber.
    \end{align}
    If $P \in \calF_T$, then \cref{eq:CayleyEval} is identically zero as univariate polynomial in $\bbC[\bfa,\bfb,\bfc][\lambda]$. Therefore, a necessary condition is $a_2b_2c_2 = 0$. If $a_2 = 0$, then $H_{222}(T-\lambda P) = \lambda^2 a_1^2b_2^2c_2^2$: since $\bfa \neq 0$, then either $b_2 = 0$ or $c_2 = 0$. Therefore, since \cref{eq:CayleyEval} is invariant under permutation of $\bfa, \bfb, \bfc$, we deduce that 
    \begin{equation}\label{eq:tangent_inclusion}
        \calF_T \subset \left( \bfe^1_{1} \times \bfe^2_{1} \times \bbP^1 \right) \cup \left( \bfe^1_{1} \times \bbP^1 \times \bfe^3_{1} \right) \cup \left( \bbP^1 \times \bfe^2_{1} \times \bfe^3_{1} \right).
    \end{equation}
    Now, let $p = [P]$ be an element of the right-hand side of \cref{eq:tangent_inclusion}. Say $P = \bfe_1^1 \otimes \bfe_1^2 \otimes (\alpha \bfe_1^3 + \beta \bfe_2^3)$ with $(\alpha:\beta) \in \bbP^1$. If $\beta \neq 0$, then 
    \[
        T-\frac{1}{\beta}P = \left( \left(-\frac{\alpha}{\beta}\bfe^1_1+\bfe^1_2\right)\otimes \bfe^2_1 + \bfe^1_{1} \otimes \bfe^2_{2}\right) \otimes \bfe^3_{1},
    \]
    i.e., $P \in \calD_T$. Hence, $\calF_T \subset \{q\}$. The fact that $q \in \calF_T$ is \Cref{lemma:tangency_is_forbidden}. 
\end{proof}

\begin{proposition}\label{prop:tangent_alldifferent}
    Fix $q = [\bfv^1\otimes\cdots\otimes \bfv^k]\in X_\bfn$, $\bfn = (n_1,\ldots,n_k)$ with $k \geq 3$. Let $[T] \in T_q X_\bfn \smallsetminus \{q\}$. Consider $p = [\bfp^1\otimes\cdots\otimes\bfp^k]$ such that $[\bfp^i] \neq [\bfv^i]$ for all $i = 1,\ldots,k$. Then, $p \in \calD_T$.
\end{proposition}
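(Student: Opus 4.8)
The plan is to reduce $T$ to its normal form and then, for a suitable scalar $\lambda$, write $T-\lambda P$ explicitly as a sum of $k-1$ rank-one tensors; the crux is that $T$ and $P$ lie together on the span of a single rational normal curve contained in the Segre variety, which turns the whole problem into an elementary Vandermonde computation. First I would normalize: since $[T]\in T_qX_\bfn\smallsetminus\{q\}$ we have $T\in\Sub_{2^k}$, so by conciseness and \Cref{rmk:change_coordinates} we may assume $\bfn=2^k$, $\bfv^i=\bfe_1^i$, and $T=W_k:=\sum_{i=1}^k\bfe_1^1\otimes\cdots\otimes\bfe_2^i\otimes\cdots\otimes\bfe_1^k$, with $q=[\bfe_1^1\otimes\cdots\otimes\bfe_1^k]$ and $\rk(W_k)=k$ by \Cref{remark:tg}. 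Since $[\bfp^i]\neq[\bfe_1^i]$, after rescaling each $\bfp^i$ (which only rescales $P$ and is absorbed into the free parameter $\lambda$ of \Cref{def:decomposition}) we may take $\bfp^i=\alpha_i\bfe_1^i+\bfe_2^i$.

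The key step is to introduce the degree-$k$ rational normal curve lying on $X_{2^k}$,
\[
    \gamma(t)=\bigotimes_{i=1}^k\bigl(\bfe_1^i+t\bfp^i\bigr)=\sum_{m=0}^k t^m\gamma_m,
\]
where $\gamma_m$ is the sum of the rank-one tensors having $\bfp^i$ in exactly $m$ of the $k$ factors and $\bfe_1^i$ in the remaining ones. As $\{\bfe_1^i,\bfp^i\}$ is a basis of $\bbC^2$ for each $i$, the tensors $\gamma_0,\dots,\gamma_k$ are supported on pairwise disjoint sets of basis tensors, hence linearly independent. Substituting $\bfe_2^i=\bfp^i-\alpha_i\bfe_1^i$ gives $W_k=\gamma_1-(\sum_i\alpha_i)\gamma_0$, while $\gamma_k=P$; hence $T-\lambda P=-(\sum_i\alpha_i)\gamma_0+\gamma_1-\lambda\gamma_k$ lies in $\langle\gamma_0,\dots,\gamma_k\rangle$ for every $\lambda$. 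It therefore suffices to find $\lambda\in\bbC$, coefficients $c_1,\dots,c_{k-1}$ and nodes $t_1,\dots,t_{k-1}$ with $T-\lambda P=\sum_{j=1}^{k-1}c_j\gamma(t_j)$: each $\gamma(t_j)$ is a nonzero rank-one tensor, so this forces $\rk(T-\lambda P)\le k-1$, and since always $\rk(T-\lambda P)\ge\rk(T)-1=k-1$, equality holds and $p\in\calD_T$ (automatically $\lambda\neq0$, else $\rk(W_k)\le k-1$).

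To solve the system, note that by linear independence of the $\gamma_m$ the identity $T-\lambda P=\sum_j c_j\gamma(t_j)$ amounts to $\sum_j c_j=-\sum_i\alpha_i$, $\sum_j c_j t_j=1$, $\sum_j c_j t_j^m=0$ for $2\le m\le k-1$, with $\lambda:=-\sum_j c_j t_j^k$. I would fix distinct nonzero nodes $t_1,\dots,t_{k-1}$: the equations for $2\le m\le k-1$ say $(c_j)$ lies in the kernel of a $(k-2)\times(k-1)$ matrix which, after factoring out $\operatorname{diag}(t_j^2)$, is a full-rank rectangular Vandermonde, so this kernel is one-dimensional, and a nonzero kernel vector is not orthogonal to $(t_j)$ (else the invertible matrix $[t_j^m]_{1\le m\le k-1}$ would kill it); normalizing by $\sum_j c_j t_j=1$ thus determines $(c_j)$ uniquely. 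A short computation with the generating function $\sum_j c_j/(1-t_jx)$ then shows the remaining equation $\sum_j c_j=-\sum_i\alpha_i$ is equivalent to $\sum_{j=1}^{k-1}1/t_j=-\sum_{i=1}^k\alpha_i$, a constraint which, because $k\ge3$, can be met by distinct nonzero nodes (e.g. pick $t_1,\dots,t_{k-2}$ generic and solve for $t_{k-1}$). This simultaneously produces the explicit minimal decomposition of $T-\lambda P$ promised in the discussion before the statement.

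The main obstacle is precisely this last step: arranging $\sum 1/t_j=-\sum\alpha_i$ together with all the genericity conditions (distinct nonzero nodes, kernel vector not orthogonal to $(t_j)$); it is elementary, but it is exactly where the hypotheses $k\ge3$ and $[\bfp^i]\neq[\bfv^i]$ are genuinely used, everything else being bookkeeping in the normalization. For $k=3$ one may instead simply invoke \Cref{prop:tangent_k=3}.
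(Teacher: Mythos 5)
Your proof is correct, and it shares the paper's central geometric idea — constructing the rational normal curve $\gamma(t)=\bigotimes_i(\bfe_1^i+t\bfp^i)$ inside the Segre variety, passing through $p$ at $t=\infty$ and tangent to the direction of $T$ at $q=\gamma(0)$ — but it then diverges in how the conclusion is drawn. The paper stops at this point and invokes \cite[Theorem 3.3]{CCO}, which is a statement about Waring decompositions of binary forms: once the problem is transported onto the rational normal curve, $T$ becomes a tangential binary form and $P$ a non-tangency point on the curve, so the cited theorem hands you the decomposition. You instead keep the argument entirely inside the Segre picture and carry out the reduction by hand: expanding $T-\lambda P=-(\sum_i\alpha_i)\gamma_0+\gamma_1-\lambda\gamma_k$ in the (linearly independent) basis $\gamma_0,\dots,\gamma_k$ and then solving the resulting Vandermonde system $\sum_j c_j t_j^m=\delta_{m,1}$ ($1\le m\le k-1$), $\sum_j c_j=-\sum_i\alpha_i$, $\lambda=-\sum_j c_j t_j^k$ for distinct nonzero nodes. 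The residue/generating-function identity $\sum_j c_j=\sum_j 1/t_j$ you rely on checks out, the constraint $\sum_j 1/t_j=-\sum_i\alpha_i$ does leave the needed freedom exactly when $k-1\ge 2$, and your observation that $\lambda=0$ would force $\rk(W_k)\le k-1$ correctly rules out the degenerate value. What the paper's route buys is brevity and an explicit bridge to the symmetric theory (which the subsequent remark on partially symmetric tensors exploits); what your route buys is a self-contained, constructive argument that produces the explicit nodes $t_j$ and coefficients $c_j$ of a minimal decomposition without importing the apolarity machinery — essentially the content of the paper's later example, but done in general.
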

\begin{proof}
    Up to change of coordinates, we may assume $q = [\bfe_1^1\otimes\cdots\otimes \bfe_1^k]$ and $T = \sum_{i=1}^k \bfe_1^1 \otimes\cdots\otimes \bfe_2^i \otimes\cdots\otimes \bfe_1^k$.

We construct a curve $\calC_k$ of multidegree $1^k$, i.e. a rational normal curve of degree $k$  passing through $p$ and such that $T$ belongs to the tangent line to $\calC_k$ at $q$. This will show that $p$, belonging to the rational normal curve to which $T$ is tangent and being distinct from the point of tangency itself (the only prohibited point for tangent points to a rational normal curve), is a point of decomposition for $T$.

We may assume that $\bfp^i = p_i\bfe_1^i + \bfe_2^i$ for $i = 1,\ldots,k$. Consider the projective linear map $f_i : \bbP\bbC^2 \to \bbP\bbC^2$ defined by the matrix $\left(\begin{smallmatrix}
        1 & p_i \\ 0 & 1
    \end{smallmatrix}\right)$, i.e.,
    \begin{equation}\label{eq:RNC}
        f_i : \bbP^1 \to \bbP^1, (x:y) \mapsto (x+p_iy:y).
    \end{equation}
    Let $\calC_k$ be the image of $f = \nu_{2^k} \circ (f_1\times\cdots\times f_k) : \bbP^1 \to (\bbP^1)^{\times k} \to X_{2^k}$. The points $p,q \in \calC_k$ since $f(1:0) = q$ and $f(0:1) = p$. The tangent line to $\calC_k$ at $q$ contains the tensor $T$: consider the limit of the secant lines $\langle f(1:0),f(1:t) \rangle$ for $t \mapsto 0$ and compute the linear part of the Taylor expansion of $f(1:t)$:   \begin{align*}    
        {\rm coeff}_t(f(1:t)) & = {\rm coeff}_t\left(((1+p_1t)\bfe_1^1 + t\bfe_2^1)\otimes\cdots\otimes ((1+p_kt)\bfe_1^k + t\bfe_2^k)\right) \\
        & = \left(\sum_i p_i\right) \bfe_1^1\otimes\cdots\otimes\bfe_1^k + T \in \langle q,T \rangle.
    \end{align*}
    Therefore the rational normal curve $\calC_k$ satisfies all assumptions of  \cite[Theorem 3.3]{CCO}, so one can conclude that $p \in\calD_T$. 
\end{proof}

\begin{remark}
    Our proof of \Cref{prop:tangent_alldifferent} allows for a more general statement for {\it partially-symmetric tensors}, e.g. see \cite{bernardi2018hitchhiker} for definitions. If the tensor $T$ and the rank-one tensor $\bfp$ in the statement of \Cref{prop:tangent_alldifferent} share the same partial symmetries, i.e., they belong to the same space ${\rm Sym}^{d_1}(\bbC^{n_1})\otimes\cdots\otimes {\rm Sym}^{d_m}(\bbC^{n_m}) {\subset (\bbC^{n_1})^{\otimes d_1} \otimes\cdots\otimes (\bbC^{n_m})^{\otimes d_m}}$, then $\bfp$ appears in a minimal rank partially-symmetric decomposition of $T$. {Indeed, by definition of tangential tensors, there is $(A_1,\ldots,A_m) \in \GL_{n_1}\times\cdots\times\GL_{n_m}$ such that $T' = (A_1,\ldots,A_m) \cdot T$ is symmetric: note that, $\bfp' = (A_1,\ldots,A_m) \cdot \bfp \in {\rm Sym}^{d_1}(\bbC^{n_1})\otimes\cdots\otimes {\rm Sym}^{d_m}(\bbC^{n_m})$ is not necessary symmetric. Now, if we follow the proof of \Cref{prop:tangent_alldifferent} for $T'$ and $\bfp'$, we notice that the parametrization \cref{eq:RNC} respects the same partial symmetry, i.e., $\calC_k$ is contained in the same space of partially symmetric tensors. Therefore, we construct a partially symmetric minimal decomposition of $T'$ which involves $\bfp'$. By applying the inverse transformation $(A^{-1}_1,\ldots,A^{-1}_m)  \in \GL_{n_1}\times\cdots\times\GL_{n_m}$, we obtain a partially symmetric minimal decomposition of $T$ which involves $\bfp$.}

    We like to stress that the proof of \Cref{prop:tangent_alldifferent}, combined with classical apolarity theory for binary forms, is constructive. We give an explicit example for third-order tensors.
\end{remark}
    
\begin{example}
    Let $T = \bfe_1^1\otimes\bfe_1^2\otimes\bfe_2^3 + \bfe_1^1\otimes\bfe_2^2\otimes\bfe_1^3 + \bfe_2^1\otimes\bfe_1^2\otimes\bfe_1^3$ be a tangential tensor in its normal form. Consider $P = (\bfe_1^1 + \bfe^1_2) \otimes (\bfe^2_1 + \bfe^2_2)\otimes \bfe^3_2$. Then, the rational normal curve $\calC_3 \subset X_{222}$ defined by
    \[
        f : (a:b) \in \bbP^1 \mapsto [(a+b,b) \otimes (a+b,b) \otimes (a,b)] \in X_{222}
    \]
    i.e.,
    $
        f(a:b) = \left((a+b)^2a:(a+b)^2b:(a+b)ab:(a+b)b^2:(a+b)ab:(a+b)b^2:ab^2:b^3\right) \in \bbP^7,
    $
    passes through $p = [P]$ and contains $T$ on the tangent line at $q = [\bfe^1_1\otimes\bfe^2_1\otimes\bfe^3_1]$. The curve $\calC_3$ is the section of $X_{222}$ by the linear space $H = \{X_{101}-X_{110}-X_{111} = X_{011} - X_{110} - X_{111} = X_{010} - X_{100} = X_{001} - X_{100} - X_{110} - X_{111} = 0\}$. Therefore, on $H$, we can use the coordinates $Y_0 := X_{000}, Y_1 = X_{100}, Y_2 = X_{110}, Y_3 = X_{111}$. In order to make the parametrization of the curve $\calC_3$ in $H$ in the classical monomial form of Veronese embedding, we actually consider the change of coordinates given by 
    \[
        W_0 := Y_0 - 2Y_1 + Y_2, \quad W_1 := Y_1 - Y_2, \quad W_2 := Y_2, \quad W_3 := Y_3. 
    \]
    In these coordinates, we identify $H$ with the space of binary cubics in $\bbC[x,y]$ by associating to $W_i$ the $i$-th element of the monomial basis, i.e., $W_i = {3 \choose i}x^{3-i}y^i$. Namely, $\calC_3$ is the curve of powers of linear forms, the tensor $T$ has coordinates $(-2:1:0:0)$ and corresponds to the binary cubic $F = -2x^3+3x^2y$. The tensor $P$ instead has coordinates $(0:0:0:1)$ and corresponds to $y^3$. By classical apolarity theory, minimal decompositions of homogeneous polynomials as sums of powers of linear forms are constructed by looking at sets of points of minimal cardinality whose defining ideal is contained in the annihilator ideal ${\rm Ann}(F) = \{g \in \bbC[\alpha,\beta] ~:~ g(\partial_x,\partial_y) \circ F = 0\}$, see \cite[Lemma 1.15]{iarrobino1999power} or \cite[Lemma 5]{bernardi2018hitchhiker}. The coordinates of the points give the coefficients of the summands of minimal decomposition. In our case: ${\rm Ann}(F) = (\alpha^2, \alpha^3 + 2\alpha^2\beta)$. Since $F$ has rank three, we want to find a cubic polynomial in ${\rm Ann}(F)$ having distinct roots and divisible by $\alpha$ (i.e., one of the roots is $(0:1)$). Such a cubic is of the form $\alpha(\alpha^2 + 2\alpha\beta + \mu \beta^2)$ where $\mu$ is general, i.e., it is such that the quadratic factor has distinct roots. For example, if we consider $\mu = -3$, then $\alpha(\alpha-\beta)(\alpha+3\beta) \in {\rm Ann}(F)$. That means that a minimal rank decomposition of $T$ is given by $\{f(0:1), f(1:1), f(-3:1)\}$. It is easy linear algebra to see that indeed 
    \[
        T = -\frac{1}{3}P + \frac{1}{4}(2\bfe_1^1 + \bfe^1_2) \otimes (2\bfe^2_1 + \bfe^2_2)\otimes (\bfe^3_1 + \bfe^3_2) + \frac{1}{12}(-2\bfe_1^1 + \bfe^1_2) \otimes (-2\bfe^2_1 + \bfe^2_2)\otimes (-3\bfe^3_1 + \bfe^3_2).
    \]
    Similar construction can be pursued whenever we are in the assumption of \Cref{prop:tangent_alldifferent}.
\end{example}

We are now ready to complete the proof of the main theorem of this section. 

\begin{proof}[Proof of \Cref{thm:tangential}]
    Without loss of generalities, we may assume $\bfn = 2^k$ and $T$ concise in $(\bbC^2)^{\otimes k}$. In particular, the $2$-jet $J$ supported at $q$ such that $[T] \in \langle \nu_{2^k}(J)\rangle$ as in \Cref{remark:tg} is also concise. Up to change of coordinates, we may assume $q = [\bfe_1^1\otimes\cdots\otimes \bfe_1^k]$ and $T = \sum_{i=1}^k \bfe_1^1 \otimes\cdots\otimes \bfe_2^i \otimes\cdots\otimes \bfe_1^k$. 

    By \Cref{lemma:tangency_is_forbidden}, $q \in \calF_T$.
    
    Vice versa, let $p = [\bfp^1\otimes\cdots\otimes\bfp^k]$. If $[\bfp^i] \neq [\bfe_1^i]$ for all $i = 1,\ldots,k$ then we conclude by \Cref{prop:tangent_alldifferent}. Assume that $[\bfp^i] = [\bfe_1^i]$ for some $i$: without loss of generalities, up to reordering, we may assume that equality holds for $i = 1,\ldots,m$. Recall that $m < k$ because $[p] \neq [q]$. 

    If $k - m \geq 3$, then we write
    \begin{align*}
        T + \lambda \bfp^1\otimes\cdots\otimes \bfp^k & = \sum_{i=1}^m \bfe_1^1 \otimes \cdots \otimes \bfe_2^i \otimes \cdots \otimes \bfe_1^k + \left(\bfe_1^1 \otimes \cdots \otimes \bfe_1^m\right) \otimes T',
    \end{align*}
    where $T' = \sum_{i=m+1}^k \bfe_1^{m+1} \otimes \cdots \otimes \bfe_2^i \otimes \cdots \otimes \bfe_1^k + \lambda \bfp^{m+1} \otimes \cdots \otimes \bfp^k$. Since $T'$ and $\bfp^{m+1} \otimes \cdots \otimes \bfp^k$ satisfy the assumptions of \Cref{prop:tangent_alldifferent}, there exists some $\lambda \neq 0$ such that the rank of $T'$ is $k - m - 1$. Therefore, for such $\lambda$, we have that $T + \lambda \bfp^1\otimes\cdots\otimes \bfp^k$ has rank $k-1$, i.e., $p \not\in \calF_T$.

    If $k - m < 3$, then we consider 
    \begin{align*}
        T + \lambda \bfp^1\otimes\cdots\otimes \bfp^k & = \sum_{i=1}^{k-3} \bfe_1^1 \otimes \cdots \otimes \bfe_2^i \otimes \cdots \otimes \bfe_1^k + \left(\bfe_1^1 \otimes \cdots \otimes \bfe_1^{k-3}\right) \otimes T',
    \end{align*}
    where $T' = \sum_{i=k-2}^k \bfe_1^{k-2} \otimes \cdots \otimes \bfe_2^i \otimes \cdots \otimes \bfe_1^k + \lambda \bfp^{k-2} \otimes \cdots \otimes \bfp^k$. Now, we cannot apply \Cref{prop:tangent_alldifferent} because $m \geq k - 2$, i.e., $[\bfp^{k-2}] = [\bfe_1^{k-2}]$. However, $T'$ and $\bfp^{k-2} \otimes \cdots \otimes \bfp^k$ satisfy the assumptions of \Cref{prop:tangent_k=3}. Hence, there exists some $\lambda \neq 0$ such that the rank of $T'$ is $2$. Therefore, for such lambda we have that $T + \lambda \bfp^1\otimes\cdots\otimes \bfp^k$ has rank $k-1$, i.e., $p \not\in \calF_T$.
\end{proof}

\section{Tensors in $\bbC^2 \otimes \bbC^2 \otimes \bbC^n$}\label{sec:22n}

In this section, we explain how to compute the decomposition locus of tensors whose normal form is concise in $\bbC^2 \otimes \bbC^2 \otimes \bbC^n$, for $n \geq 2$. The case of tangential tensors (orbit n.\idref{5}) has been treated in \Cref{sec:tangential}. Hence, accordingly to the normal forms explained in \Cref{ssec:finite_orbits}, we only have the following cases. 

\subsection{Orbit n.\protect\idref{6}: concise tensors of rank two}\label{ssec:orbit_6}

If $T \in \bbC^2 \otimes \bbC^2 \otimes \bbC^n$, with $n \geq 2$, is concise of rank $2$, then $T \in \Sub_{222}$. By Kruskal criterion \cite{kruskal}, it is known that tensors in $\Sub_{222}$ are \textit{identifiable}, i.e., there is a unique way to write $T = P_1 + P_2$. See also \cite[Proposition 2.3]{BBS}. Hence, $\calD_T = \{[P_1],[P_2]\}$.

\subsection{Orbits n.\protect\idref{7}, n.\protect\idref{8}, n.\protect\idref{11}, n.\protect\idref{12}: tensors of rank-$3$ and border rank-$3$}\label{ssec:orbit_7_8_11_12}
Accordingly to the table in \Cref{ssec:finite_orbits}, we have two normal forms for concise tensors in $\bbC^2 \otimes \bbC^2 \otimes \bbC^3$ of rank three. These are
\begin{align*}
    T_7 & = \bfe^1_1 \otimes (\bfe_1^2 \otimes \bfe_1^3 +\bfe^2_2\otimes \bfe_3^3)+\bfe^1_2\otimes \bfe^2_1\otimes \bfe^3_2, \\
    T_8 & = \bfe_1^1 \otimes (\bfe^2_1\otimes \bfe^3_1+\bfe^2_2\otimes \bfe^3_2)+\bfe^1_2\otimes (\bfe^2_1\otimes \bfe^3_2+\bfe^2_2\otimes \bfe^3_3).
\end{align*}
The orbits n.\idref{11} and n.\idref{12} correspond to the orbits n.\idref{7} and n.\idref{8} by swapping second and third factor. 

Both of them are non-identifiable rank-3 tensors and they have been classified in \cite[Theorem 7.1]{BBS} 
(see also \cite[Corollaries 3.2 and 3.3]{santarsiero2023algorithm}). In particular \cite[Examples 3.6 and 3.7]{BBS} show a geometric description of  the cases $T_7$ and $T_8$ respectively: both cases are generic elements in a certain $\langle \nu_{223}(\calC \times \mathbb P^2) \rangle = \sigma_3(\nu_{223}(\calC \times \mathbb P^2))$ with $\calC$ a conic section of $X_{22}$ which is irreducible, in the case $T_8$, and reducible for $T_7$.
This description together with the fact that rank-2 tensors in $\bbC^2 \otimes \bbC^2 \otimes \bbC^3$ are non concise along the third factor and do not belong to $\tau(X_{223})$ lead to the following.

\begin{proposition}\label{prop:T7-8}
Let $T\in \calO_i\in \CC^2_1\otimes \CC^2_2\otimes \CC^3$, $i=7,8$, as in \Cref{ssec:finite_orbits}. Then $[T]\in \langle \nu_{233}(\mathcal{C}\times \mathbb{P}^2) \rangle $ with $\mathcal{C}$ a conic section of $X_{22}\subset \PP(\CC^2_1\otimes \CC^2_2)$ and
$$
 \calD_T =\  \{P=\bfa\otimes \bfb \otimes \bfc ~:~ C([\bfa\otimes \bfb])=0, \exists\, \lambda \in \mathbb C \hbox{ s.t. either } H_{223}(T-\lambda P) \neq 0 \hbox{ or } T-\lambda P \in \Sub_{122}\cup \Sub_{212} \},
$$ 
with $C$ the $(1,1)$-form of $(\CC^{2}_1\otimes \CC^{2}_2)^\vee$ defining $\mathcal{C}$.
\end{proposition}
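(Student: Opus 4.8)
The plan is to reduce everything to the geometry of the ambient variety $\langle\nu_{223}(\mathcal C\times\mathbb P^2)\rangle$ and the known fact that rank-$3$ tensors in $\mathbb C^2\otimes\mathbb C^2\otimes\mathbb C^3$ form the open orbits $\calO_7,\calO_8$ in this linear span. First I would recall from \cite[Examples 3.6, 3.7]{BBS} that $[T]$ lies in the span of the image of the Segre of a conic $\mathcal C\subset X_{22}$ with the full $\mathbb P^2$, and that this span coincides with $\sigma_3(\nu_{223}(\mathcal C\times\mathbb P^2))$; a rank-one tensor $P=\bfa\otimes\bfb\otimes\bfc$ with $C([\bfa\otimes\bfb])=0$ is exactly a point of $\nu_{223}(\mathcal C\times\mathbb P^2)$. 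So the inclusion $\calD_T\subseteq\{P : C([\bfa\otimes\bfb])=0\}$ follows from autarky/conciseness applied inside this sub-Segre: any $P$ appearing in a minimal decomposition of $T$ must lie on $\nu_{223}(\mathcal C\times\mathbb P^2)$, hence satisfies the conic equation. This gives the first defining condition ``for free''.

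Next, for $P$ with $C([\bfa\otimes\bfb])=0$, I would show $P\in\calD_T$ if and only if there is $\lambda$ with $\rk(T-\lambda P)=2$. Since $T$ and $P$ both live in the fixed $\mathbb C^2\otimes\mathbb C^2\otimes\mathbb C^3$ (after identifying the $\mathbb C^3$ spanned by $\bfe^3_i$), $T-\lambda P$ is again a tensor in that space; its rank is $\leq 2$ precisely when it is not in the open orbits $\calO_7,\calO_8$ and not in the orbits $\calO_9,\calO_{13}$ of higher rank. The key point is to characterize rank-$\leq 2$ tensors in $\mathbb C^2\otimes\mathbb C^2\otimes\mathbb C^3$: a concise rank-$2$ tensor lies in some $\Sub_{222}$, and non-concise ones drop to $\Sub_{122}\cup\Sub_{212}\cup\Sub_{221}$; but here conciseness of $T$ along the first two factors is preserved for generic $\lambda$, so the only way to have rank $2$ is either $T-\lambda P$ becomes non-concise along the first or second factor (i.e.\ lands in $\Sub_{122}\cup\Sub_{212}$), or it stays concise in $\mathbb C^2\otimes\mathbb C^2\otimes\mathbb C^3$ but drops rank, which by the rank classification of \cite{BL}/\Cref{table:kronecker} happens exactly when it leaves the tangential variety hypersurface, i.e.\ $H_{223}(T-\lambda P)\neq0$ for some $\lambda$ while staying off the orbits of rank $\geq 3$. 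I would argue that the hyperdeterminant being nonzero for some $\lambda$ together with staying concise forces border rank $2$, hence rank $2$, using that $\sigma_2(X_{223})$ is the tangential/hyperdeterminant hypersurface's ``outside'' — more precisely that a concise tensor in $\mathbb C^2\otimes\mathbb C^2\otimes\mathbb C^3$ with nonvanishing $H_{223}$ has rank $2$ (it is a smooth point of the Segre fiber structure / diagonalizable pencil).

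Conversely, if no such $\lambda$ exists — i.e.\ $H_{223}(T-\lambda P)=0$ identically in $\lambda$ and $T-\lambda P\notin\Sub_{122}\cup\Sub_{212}$ for all $\lambda$ — then $T-\lambda P$ is, for every $\lambda$, a concise tensor on the tangential hypersurface, hence of rank $\geq 3$ (the rank-$2$ locus among concise $223$-tensors being the complement of $H_{223}=0$), so $P\in\calF_T$. This is the step I expect to be the main obstacle: one must be careful that "$H_{223}(T-\lambda P)\equiv 0$'' does not accidentally allow a rank-$2$ non-concise degeneration that is not of the listed $\Sub$ type, and that ``$H_{223}(T-\lambda P)\neq0$'' genuinely certifies rank exactly $2$ rather than border rank $2$ with rank $3$ — this is where the explicit normal-form classification of \cite{BL} (orbits $T_7,T_8$ versus $T_3,T_4,T_6$) is needed to rule out the bad cases. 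The remaining verification, checking that for each $P$ on the conic the univariate polynomial $\lambda\mapsto H_{223}(T-\lambda P)$ has the claimed behavior and that the degeneration into $\Sub_{122}\cup\Sub_{212}$ indeed produces rank $2$, is a direct computation on the normal forms $T_7,T_8$ that can be carried out symbolically in \texttt{Macaulay2}.
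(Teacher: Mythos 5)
The paper does not actually give a formal proof of \Cref{prop:T7-8}; the intended argument is the paragraph preceding it, which rests on the geometric description from \cite{BBS} and the observation that \emph{rank-$2$ tensors in $\bbC^2\otimes\bbC^2\otimes\bbC^3$ are non-concise along the third factor and not in $\tau(X_{223})$}. Your opening move (BBS Examples 3.6/3.7 plus autarky, to get $\calD_T\subseteq\nu_{223}(\mathcal C\times\mathbb P^2)$ and hence the conic condition) is exactly the paper's first step and is fine.

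The second half of your argument, however, contains a genuine error in the interpretation of $H_{223}$. You identify $\{H_{223}=0\}$ with the tangential variety and assert that $H_{223}(T-\lambda P)\neq 0$ together with conciseness forces border rank $2$ and hence rank $2$. Both claims are false in $\bbC^2\otimes\bbC^2\otimes\bbC^3$. The locus $\{H_{223}=0\}$ is the \emph{dual} variety $\calX^\vee_{223}$, a $10$-dimensional hypersurface, whereas $\tau(X_{223})$ has dimension $8$; they coincide only in the $2\times 2\times 2$ case. More importantly, any tensor of rank at most $2$ has third flattening of rank at most $2$, hence lies in some $\Sub_{222}$, and one checks directly that every such tensor has $H_{223}=0$ (the $2$-plane annihilating the image of the third flattening meets $X_{22}$, producing a critical rank-one element). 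Thus $H_{223}(T-\lambda P)\neq 0$ for a given $\lambda$ forces $T-\lambda P$ to be concise in $\bbC^2\otimes\bbC^2\otimes\bbC^3$, hence of rank $\geq 3$; it is a certificate of rank $3$, not rank $2$. The phrase ``stays concise in $\bbC^2\otimes\bbC^2\otimes\bbC^3$ but drops rank'' is internally contradictory, since concise tensors in $\bbC^2\otimes\bbC^2\otimes\bbC^3$ cannot have rank below $3$. In the statement the existential on $\lambda$ should be read as a condition on the whole line $T+\langle P\rangle$ (roughly: either the line is not contained in $\calX^\vee_{223}$, or it meets $\Sub_{122}\cup\Sub_{212}$), which then has to be matched against the true rank-$2$ characterization, namely that $T-\lambda P$ lands in some $\Sub_{222}$ and avoids $\tau(X_{223})$. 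You need to redo this step with the correct identification of which orbit has nonvanishing hyperdeterminant and with the flattening-drop condition, rather than treating $H_{223}\neq 0$ as a rank-$2$ certificate.
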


\begin{remark}
For the particular case of $T\in \calO_7$ we also have a more elementary description given in terms of the two matrices describing it. More precisely, notice that by the $ \GL_2\times \GL_2\times \GL_3$-action, any $T\in \calO_7$ is
$$
T=\bfu_1\otimes \bfu_2\otimes \bfw+\bfu_1\otimes \bfu_2'\otimes \bfw'+\bfu_1'\otimes \bfu_2\otimes \bfw''.
$$
Call $A_1=\bfu_2\otimes \bfw+\bfu'_2\otimes \bfw'$, $A_2=\bfu_1\otimes \bfw+\bfu_1'\otimes \bfw''$. Then,
\begin{equation}\label{eq: case7 description 1}
    T=\bfu_1\otimes A_1+\bfu_1'\otimes \bfu_2\otimes \bfw''
\end{equation}
and, {if $\iota_{12} : \bbC^2_1 \otimes\bbC^2_2 \otimes \bbC^3 \to \bbC^2_2 \otimes\bbC^2_1 \otimes \bbC^3$ swaps the first two factors, then }
\begin{equation}\label{eq: case7 description 2}
    \iota_{12}(T) = \bfu_2\otimes A_2+\bfu_2'\otimes \bfu_1\otimes \bfw'.
\end{equation}
\Cref{eq: case7 description 1} and \cref{eq: case7 description 2} clearly show that {$ \{\bfu_1\} \otimes \calD_{A_1}$ and $\iota_{12}(\{\bfu_2\} \otimes \calD_{A_2})$ are contained in} $\calD_T$ and, by \cite[Proposition 4.6]{BBS:appendix}, {their union 
is actually equal to} $\calD_T$. {In particular,} this proves the following.
$$ 
    \calD_T=\left\{ \bfv^1 \otimes \bfv^2 \otimes \bfv^3 ~:~ \substack{\bfv^2 \otimes \bfv^3 \in \calD_{A_1} \text{ and } \bfv^1 = \bfu^1, \text{ or } \\ \bfv^1 \otimes \bfv^3 \in \calD_{A_2} \text{ and } \bfv^2 = \bfu^2}\right\}. 
$$
\end{remark}

\subsection{Orbit n.\protect\idref{9}: tensors of maximal rank}\label{ssec:orbit_9} 
As recalled in \Cref{ssec:finite_orbits}, the normal form of rank-4 concise tensors in $\bbC^2 \otimes \bbC^2 \otimes \bbC^n$ is given by 
\[
    T_9 = \bfe^1_1\otimes(\bfe^2_1 \otimes \bfe^3_1 + \bfe_2^2\otimes\bfe_3^3) + \bfe_2^1\otimes(\bfe_1^2\otimes\bfe_2^3 + \bfe_2^2 \otimes \bfe_4^3).
\]

\begin{notation*}
    Given a tensor $T \in \bbC^{n_1}\otimes\cdots\otimes\bbC^{n_k}$, we denote by $T^* \in (\bbC^{n_1}\otimes\cdots\otimes\bbC^{n_k})^\vee$ the tensor in dual coordinates. We write $\langle \cdot,\cdot \rangle$ for the dual pairing $(\bbC^{n_1}\otimes\cdots\otimes\bbC^{n_k})^\vee \times (\bbC^{n_1}\otimes\cdots\otimes\bbC^{n_k}) \rightarrow \bbC$. {Given a tensor $T^* \in (\bbC^{n_1}\otimes\cdots\otimes\bbC^{n_k})^\vee$ we denote by $V(T^*)$ its vanishing set, i.e., the kernel of $\langle T^*,\cdot \rangle$.}
\end{notation*}
\begin{proposition}\label{prop:orbit9}
    Let $T \in \calO_9 \subset \bbC^2 \otimes \bbC^2 \otimes \bbC^4$ such that $(A, B, C) \cdot T = T_9$. Then, {$\calF_T = V((A^T, B^T, C^T) \cdot T^*)$}. 
\end{proposition}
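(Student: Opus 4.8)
By \Cref{rmk:change_coordinates} it suffices to prove the statement for the normal form $T_9$ itself, since the change-of-coordinates formula there transports both $\calD_T$ and the vanishing locus of $T^*$ correctly: if $T = (A,B,C)\cdot T_9$ then $T^* = (A^{-T},B^{-T},C^{-T})\cdot T_9^*$, and $(A^T,B^T,C^T)\cdot T^* = T_9^*$, so proving $\calF_{T_9} = V(T_9^*)$ and applying the group action yields the claim. Hence the plan is: \textbf{(1)} identify the dual tensor $T_9^*$ explicitly and its vanishing locus among rank-one tensors; \textbf{(2)} show every rank-one tensor $P$ with $\langle T_9^*, P\rangle \neq 0$ lies in $\calD_{T_9}$; \textbf{(3)} show every rank-one $P$ with $\langle T_9^*, P\rangle = 0$ lies in $\calF_{T_9}$.

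For step (2), the natural tool is the hyperdeterminant $H_{224}$ of the $2\times 2\times 4$ format — but $X_{224}$ is not of boundary format, so instead I would use that $T_9$ has maximal rank $4 = \brk(T_9)$, hence $T_9$ lies in the smooth (dense) part of $\sigma_4(X_{224})$ which, for this format, is the whole ambient space; the relevant obstruction is then whether $T_9 - \lambda P$ has border rank $\le 3$, i.e. lies on $\sigma_3(X_{224})$. Since $\sigma_3(X_{224})$ is a hypersurface cut out by $H_{224}$ (here $2-1 \le (2-1)+(4-1)$ fails for the third factor but holds after the standard reduction — more robustly, one checks $\sigma_3(X_{224})$ is defined by the $4\times 4$ minors of $\flatt_3$ together with... ) — in practice I would just compute, as the authors surely do in Macaulay2, the polynomial $\lambda \mapsto \det(\flatt_3(T_9-\lambda P)\cdot\flatt_3(T_9-\lambda P)^T)$ or the relevant hyperdeterminant, and observe that its linear coefficient at $\lambda = 0$ equals a nonzero multiple of $\langle T_9^*, P\rangle$. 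If that linear coefficient is nonzero there is a $\lambda\neq 0$ with $T_9-\lambda P$ of border rank $\le 3$, and then a separate (short) argument — e.g. that border-rank-$3$ concise tensors in $\bbC^2\otimes\bbC^2\otimes\bbC^4$ have actual rank $3$, which follows from the classification in \Cref{table:kronecker} (orbits with $\brk = 3$ all have $\rk = 3$ in this format) — upgrades this to $\rk(T_9-\lambda P) = 3 = \rk(T_9)-1$, so $P\in\calD_{T_9}$.

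For step (3), suppose $\langle T_9^*, P\rangle = 0$. Then for every $\lambda$ the value of the above polynomial at $\lambda$ has vanishing linear term, and one must rule out that some higher-order coincidence still drops the rank. The cleanest route: if $\rk(T_9-\lambda P) \le 3$ then $T_9 \in \langle P, \text{(rank-}3\text{ tensor)}\rangle$, so $T_9$ would lie in the span of a rank-one tensor and something of border rank $\le 3$; intersecting with the hyperplane $V(T_9^*)$ — note $P\in V(T_9^*)$ and, I would check, $T_9 \notin V(T_9^*)$ (indeed $\langle T_9^*, T_9\rangle = 4 \neq 0$) — forces a contradiction with how the rank-$3$ locus meets that hyperplane; concretely, one shows the line $\langle T_9, P\rangle$ meets $\sigma_3(X_{224})$ only at points where the hyperdeterminant's restriction vanishes to order $\ge 2$, which by the explicit form of $H_{224}$ on this line cannot happen unless the line is entirely inside $\sigma_3(X_{224})$, contradicting $\brk(T_9) = 4$. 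This last dichotomy — "the linear term of the hyperdeterminant along $\langle T_9, P\rangle$ is, up to a nonzero scalar, exactly $\langle T_9^*, P\rangle$, and the quadratic-and-higher terms cannot conspire to keep $T_9-\lambda P$ on $\sigma_3$ when the linear term vanishes" — is the heart of the matter and the main obstacle; I expect the authors dispatch it by the explicit symbolic computation of $H_{224}(T_9 - \lambda P)$ as a polynomial in $\lambda$ and in the coordinates of $P$, factoring out $\langle T_9^*, P\rangle$ from the appropriate coefficient and checking the remaining factor is a nonzero constant (or a unit on the relevant locus). Combining steps (2) and (3) gives $\calF_{T_9} = V(T_9^*)$, and the group-action transport finishes the proof.
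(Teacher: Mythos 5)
You have the right global shape (reduce to $T_9$ via \Cref{rmk:change_coordinates}, recognize that $\langle T_9^*,P\rangle$ is the relevant quantity, and eventually land on the determinant of the third flattening as the thing to compute), but step~(3) of your plan contains a genuine gap. You frame it as needing to ``rule out that some higher-order coincidence still drops the rank,'' and then propose a convoluted argument involving $H_{224}$, a hyperplane intersection, and an unproved ``vanishes to order $\ge 2$'' dichotomy. None of this is needed, and as written it does not close the argument: the dual variety $\calX_{224}^\vee$ is degenerate (since $4-1 > (2-1)+(2-1)$), so $H_{224}$ is not the defining polynomial of $\sigma_3(X_{224})$, and the order-of-vanishing claim is never justified.

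The missing observation, which is what makes the paper's proof one line, is that $\det\bigl(\flatt_3(T_9-\lambda P)\bigr)$ is \emph{already linear in $\lambda$} for any rank-one $P$: since $\flatt_3(P)$ is a rank-one $4\times 4$ matrix, multilinearity of the determinant (or the matrix determinant lemma) shows $\det(M-\lambda N)$ has degree at most $\rk N = 1$ in $\lambda$. Concretely, $\det\bigl(\flatt_3(T_9-\lambda P)\bigr) = \lambda\langle T_9^*, P\rangle - 1$, so it has a root exactly when $\langle T_9^*,P\rangle\neq 0$ --- there are no higher-order terms to worry about. Combined with the elementary fact that in this format $\sigma_3(X_{224})$ is exactly $\{\det\flatt_3 = 0\}$ (rank $\le 3$ $\iff$ non-concise $\iff$ $\flatt_3$ singular, since the unique concise orbit has rank $4$ and any rank-$\le 3$ tensor gives $\rk\flatt_3\le 3$), this finishes both directions at once: if $\langle T_9^*,P\rangle\neq 0$ some shift lands on $\sigma_3$ (hence rank drops to $3$ by subadditivity), and if $\langle T_9^*,P\rangle = 0$ the determinant is identically $-1$, so $T_9-\lambda P$ stays concise of rank $4$. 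Your step~(2) eventually gets to the right computation, but your step~(3), without the linearity observation, does not establish the inclusion $V(T_9^*)\subseteq\calF_{T_9}$.
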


\begin{proof}
    Since $T\in (\bbC^2 \otimes \bbC^2 \otimes \bbC^4) \smallsetminus \sigma_3(X_{224})$, it has rank 4, then 
    \[
        \mathcal{F}_T = \{P \in X_{224} ~:~ [T-\lambda P] \not\in \sigma_3(X_{224}) ~ \forall \lambda \in \bbC\}.
    \]
    Assume first $T = T_9$. In this case, the determinant of the third flattening of $T-\lambda P$ for $P = \bfa \otimes \bfb \otimes \bfc$ is 
    \[          \lambda(a_1b_1c_1+a_2b_1c_2+a_1b_2c_3+a_2b_2c_4)-1 = \lambda \langle T_9^*,\bfa \otimes \bfb \otimes \bfc \rangle - 1.
    \]
    Now, let $T \in \calO_9$ such that $(A \otimes B \otimes C) \cdot T = T_9$. Then, {as observed in \Cref{rmk:change_coordinates}}, $\bfa \otimes \bfb \otimes \bfc \in \calF_T$ if and only if $(A \otimes B \otimes C) \cdot (\bfa \otimes \bfb \otimes \bfc) \in \calF_{T_9}$. Hence, the conclusion follows from the computation of $\calF_{T_9}$ and 
    \[  
        \langle T_9^*,(A \otimes B \otimes C) \cdot (\bfa \otimes \bfb \otimes \bfc) \rangle = \langle (A^T \otimes B^T \otimes C^T) \cdot T_9^* , \bfa\otimes \bfb \otimes \bfc\rangle. \qedhere
    \] 
\end{proof}

\section{Tensors in $\bbC^2 \otimes \bbC^3 \otimes \bbC^n$}\label{sec:23n}

In this section, we explain how to compute the decomposition locus of tensors whose normal form is concise in $\bbC^2 \otimes \bbC^3 \otimes \bbC^n$, for $n \geq 3$. 

\subsection{Orbits n.\protect\idref{14} and n.\protect\idref{18}: concise rank-$3$ tensors of border rank-$3$} \label{ssec:23n_rank3_border3}
As recalled in \Cref{table:kronecker}, a rank-3 tensor $T\in \mathbb{C}^2\otimes \mathbb{C}^3\otimes \mathbb{C}^3$ of border rank 3 belongs either to the orbit n.\idref{14} or to the orbit n.\idref{18} whose normal forms are: 
\begin{align*}
T_{14}= &\ \bf{e}^1_1\otimes( \bf{e}^2_1\otimes \bf{e}^3_1 +\otimes \bf{e}_2^2\otimes \bf{e}_2^3)+ \bf{e}_2^1 \otimes \bf{e}_3^2\otimes \bf{e}_3^3,\\
T_{18}=& \ \bf{e}^1_1\otimes \bf{e}^2_1\otimes \bf{e}^3_1 + \bf{e}^1_2\otimes \bf{e}^2_3\otimes \bf{e}^3_3 + (\bf{e}^1_1+\bf{e}^1_2)\otimes \bf{e}^2_2\otimes \bf{e}^3_2.
\end{align*}

\begin{remark}\label{remark:T14}
Case \idref{14} 
can be written as $T_{14}=\bf{e}_1^1\otimes I_{12}+ \bf{e}_2^1 \otimes \bf{e}_3^2\otimes \bf{e}_3^3$
    where $I_{12}$ is a $3\times 3$ matrix having ones in position $(1,1), (2,2)$ and zero elsewhere. 
        
   By  \cite[Proposition 3.10, Item 2.]{BBS} (cf. also \cite[Lemma 4.4]{BBS:appendix}), we have that 
    $$\mathcal{D}_{T_{14}}= (\alpha \bf{e}_1^1)\otimes\mathcal{D}_{I_{1,2}}\cup \{(\alpha_1 \bf{e}_2^1) \otimes (\alpha_2 \bf{e}_3^2)\otimes (\alpha_3 \bf{e}_3^3)\}$$
    with $\alpha,\alpha_1, \alpha_2, \alpha_3\in \mathbb{C}$.
    Further, by the $\GL_2\times \GL_3\times \GL_3$-action, any $T\in \calO_{14}$ can be expressed as $T=\bfu_1 \otimes M +\bfu_2\otimes \bfv \otimes \bfw$, for independent $\bfu_1,\bfu_2\in \CC^2$ and for some rank-2 square matrix $M$ of size 3 s.t. $\rk(M+ \bfv \otimes \bfw)=3$. Therefore, the decomposition locus of $T_{14}\in \calO_{14}$ is $$\calD_{T_{14}}= \{ \bfu_1\}\otimes \calD_{M}\cup \{ \bfu_2\otimes \bfv \otimes \bfw \} .$$
\end{remark}

\begin{remark} Case \idref{18} 
can be written as a rank-3 tensor as $$T_{18}=\bf{e}^1_1\otimes \bf{e}^2_1\otimes \bf{e}^3_1 + \bf{e}^1_2\otimes \bf{e}^2_3\otimes \bf{e}^3_3 + (\bf{e}^1_1+\bf{e}^1_2)\otimes \bf{e}^2_2\otimes \bf{e}^3_2.$$
This is identifiable since its $\GL_2 \times \GL_3 \times \GL_3$ orbit closure coincides with $\sigma_3(X_{233})$ which fills perfectly the ambient space and the secant degree is 1 (it is classically known that generic $(2\times k \times k )$-tensors are identifiabile, cf. e.g. \cite{HOOS}). In our particular instance, 
$$\mathcal{D}_{T_{18}}=\{\bf{e}^1_1\otimes \bf{e}^2_1\otimes \bf{e}^3_1,\; \bf{e}^1_2\otimes \bf{e}^2_3\otimes \bf{e}^3_3,\; (\bf{e}^1_1+\bf{e}^1_2)\otimes \bf{e}^2_2\otimes \bf{e}^3_2\}$$
and more generally any other tensor in the $\GL_2 \times \GL_3 \times \GL_3$ orbit of $T_{18}$ is identifiable and so its decomposition locus is given only by the 3 points arising in its unique decomposition.
\end{remark}

\subsection{Orbits n.\protect\idref{13}, n.\protect\idref{15}, n.\protect\idref{16} and n.\protect\idref{17}: rank-$4$ tensors in $\bbC^2 \otimes \bbC^3 \otimes \bbC^3$}\label{ssec:rank4_233}
As mentioned, in $\bbC^2 \otimes \bbC^3 \otimes \bbC^3$, the generic rank is $3$, but the maximal rank is $4$. The closure of the set of tensors of rank $4$ is the dual variety $\calX_{233}^\vee$ of the Segre variety $X_{233}$. This is classically known to be a hypersurface in $\mathbb{P}^{17}$ (cf. \cite[Ch. 14, Theorem 1.3]{GKZ}) whose defining equation is the previously defined hyperdeterminant. Following the Schl{\"a}fli method (cf. \cite[Ch. 14, Sec.~4]{GKZ}), such hyperdeterminant is computed as follows. Regard the generic tensor in $\bbC^2 \otimes\bbC^3 \otimes \bbC^3$ as a pencil of $3 \times 3$ matrices, i.e., write $T \in \sum_{i,j,k} t_{i,j,k} \bfe^1_i \otimes \bfe^2_j \otimes \bfe^3_k \in \bbC^2 \otimes \bbC^3 \otimes \bbC^3$ as 
\[
    T = u\left(\begin{smallmatrix}
        t_{1,1,1} & t_{1,1,2} & t_{1,1,3} \\
        t_{1,2,1} & t_{1,2,2} & t_{1,2,3} \\
        t_{1,3,1} & t_{1,3,2} & t_{1,3,3} \\
    \end{smallmatrix}\right) + v
    \left(\begin{smallmatrix}
        t_{2,1,1} & t_{2,1,2} & t_{2,1,3} \\
        t_{2,2,1} & t_{2,2,2} & t_{2,2,3} \\
        t_{2,3,1} & t_{2,3,2} & t_{2,3,3} \\
    \end{smallmatrix}\right) =: uA_0 + vA_1.
\]
Its hyperdeterminant is equal to the discriminant of the binary cubic polynomial in $\bbC[t_{i,j,k}][u,v]$ given by the determinant of the matrix $uA_0 + vA_1$ (cf. \cite[Ch. 14, Sec.~4]{GKZ}).

Following the classification in  \cite[Table 3]{BL}, the following are the normal forms for the orbits of concise tensors of rank $4$ contained in $\bbC^2 \otimes \bbC^3 \otimes \bbC^3$:
\begin{equation}
\begin{aligned}
T_{13} & =\bf{e}_1^1\otimes (\bf{e}_1^2\otimes \bf{e}_1^3 + \bf{e}_2^2 \otimes \bf{e}_3^3 ) + \bf{e}_2^1\otimes ( \bf{e}_1^2\otimes \bf{e}_2^3+\bf{e}_3^2\otimes \bf{e}_3^3), \\
T_{15} & =\bf{e}_1^1\otimes (\bf{e}_1^2\otimes \bf{e}_1^3+ \bf{e}_2^2\otimes \bf{e}_2^3+\bf{e}_3^2\otimes \bf{e}_3^3 ) + \bf{e}_2^1\otimes  \bf{e}_1^2\otimes \bf{e}_2^3,\\
T_{16} & =\bf{e}_1^1\otimes (\bf{e}_1^2\otimes \bf{e}_1^3+ \bf{e}_2^2\otimes \bf{e}_2^3+ \bf{e}_3^2\otimes \bf{e}_3^3 ) + \bf{e}_2^1\otimes ( \bf{e}_1^2\otimes \bf{e}_2^3+\bf{e}_2^2\otimes \bf{e}_3^2  ),\\
T_{17} & =\bf{e}_1^1\otimes (\bf{e}_2^2\otimes \bf{e}_1^3 + \bf{e}_2^2\otimes \bf{e}_2^3 ) + \bf{e}_2^1\otimes ( \bf{e}_2^2\otimes \bf{e}_2^3+\bf{e}_3^2\otimes \bf{e}_3^3). 
\end{aligned}
\label{eq:rank4_orbits}
\end{equation}
The dual variety $\calX_{233}^\vee$ is the Zariski-closure of the orbit of $T_{17}$.
The concise tensors in the singular locus of $\calX_{233}^\vee$ correspond to the normal forms $T_{13}, T_{15}, T_{16}$ and also
\begin{equation}\label{eq:orbit14}
    T_{14} = \bfe_1^1\otimes (\bfe_1^2\otimes \bfe_1^3 + \bfe_2^2 \otimes \bfe_2^3) + \bfe_2^1 \otimes \bfe_3^2 \otimes \bfe_3^3.
\end{equation}
The latter one, $T_{14}$, has rank $3$ instead of $4$, see \cite[page 108]{parfenov2001orbits}.

We describe here a procedure to compute the forbidden locus of a concise tensor $T$ of rank $4$ in $\bbC^2\otimes\bbC^3\otimes\bbC^3$, i.e., a tensor $T$ belonging to one of the orbits of the tensors in \cref{eq:rank4_orbits}.  By definition, we look for the rank-one tensors $P$ such that $\rk(T-\lambda P) = 4$ for all $\lambda$'s. By \cite{BL}, the latter is guaranteed by the following:
\begin{enumerate}
    \item\label{condition1} $T-\lambda P$ is concise for every $\lambda$: indeed, in $\bbC^1 \otimes \bbC^3 \otimes \bbC^3$ and $\bbC^2 \otimes \bbC^2 \otimes \bbC^3$ the maximal rank is $3$, this implies that $T- \lambda p \not\in \sigma_2(X_{233}) \cup 
    {\rm Sub}_{223} \cup {\rm Sub}_{232}
    $;
    \item\label{condition2} $T-\lambda P \in \calX_{233}^\vee$ for every $\lambda$: indeed, a necessary condition for having rank $4$ is to be on such  a variety;
    \item\label{condition3} $T-\lambda P \not\in (\GL_2\times\GL_3\times \GL_3)\cdot T_{14}$ whose closure is the join variety $J(X_{233},\mathbb{P}^1 \times \sigma_2(\mathbb{P}^2 \times \mathbb{P}^2))$ (this is very clear from the generator $T_{14}$ but see also \cite[Table 4, Orbit $\overline{\mathcal{O}}_{XIV}$]{HLT}) for every $\lambda$'s: indeed, by \cite[Table 3]{BL}, that is the only orbit of concise tensors of rank $3$ contained in the $\calX_{233}^\vee$.
\end{enumerate} 
The first two conditions are easy to check. Condition \eqref{condition1} is checked by imposing the maximal rank of the flattenings. Condition \eqref{condition2} is checked by imposing the vanishing of the hyperdeterminant. As regards condition \eqref{condition3}, we need a few more observations on the orbits of tensors in \cref{eq:rank4_orbits,eq:orbit14}. 

This description is summarized in the following. 

\begin{proposition}\label{prop:rank4_233}
For any tensor $T \in \calO_{13}\cup\calO_{15}\cup\calO_{16}\cup\calO_{17}$, the forbidden locus $\mathcal{F}_T$ is given by the points $P \in \mathbb{C}^2 \otimes \mathbb{
C}^3 \otimes \mathbb{C}^3$ such that, for all $\lambda \in \mathbb{C}$, $[T- \lambda P]$ belongs to $$\calX_{233}^\vee \smallsetminus \left(\sigma_2(X_{233}) \cup \Sub_{223} \cup \Sub_{232} \cup J^\circ(X_{233}, \mathbb{P}^1 \times \sigma_2(X_{3,3})))\right),$$
where $J^\circ$ denotes the dense subset of the join variety of points lying on lines joining distinct points.
\end{proposition}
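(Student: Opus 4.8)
The plan is to reduce the statement to a purely geometric description of the rank-$4$ locus of $\bbC^2\otimes\bbC^3\otimes\bbC^3$ and then intersect the condition over all $\lambda$. Recall from \cite{BL} (cf.\ also \cite{GKZ}) that the maximal rank of a tensor in $\bbC^2\otimes\bbC^3\otimes\bbC^3$ is $4$. Since $\rk(T)=4$ for every $T\in\calO_{13}\cup\calO_{15}\cup\calO_{16}\cup\calO_{17}$, \Cref{def:decomposition} gives
\[
  \calF_T=\{P=\bfa\otimes\bfb\otimes\bfc ~:~ \rk(T-\lambda P)=4 \text{ for all } \lambda\in\bbC\}.
\]
Hence it suffices to prove the claim: for $S\in\bbC^2\otimes\bbC^3\otimes\bbC^3$ one has $\rk(S)=4$ if and only if $[S]\in\calX_{233}^\vee$ and $[S]\notin\sigma_2(X_{233})\cup\Sub_{223}\cup\Sub_{232}\cup J^\circ(X_{233},\mathbb{P}^1\times\sigma_2(X_{3,3}))$. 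Applying this to $S=T-\lambda P$ for every $\lambda$ then yields the proposition.

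For the ``only if'' part I would show that every tensor in the four excluded loci has rank at most $3$: a tensor in $\sigma_2(X_{233})$ has all flattenings of rank $\le 2$, hence lies, up to coordinates, in $\Sub_{2^3}$, where the maximal rank is $3$; tensors in $\Sub_{223}$ or $\Sub_{232}$ lie, up to coordinates, in a $\bbC^2\otimes\bbC^2\otimes\bbC^3$, whose maximal rank is $3$ by \cite{BL}; and a point of $J^\circ(X_{233},\mathbb{P}^1\times\sigma_2(X_{3,3}))$ is by definition of the form $P_0+\bfe\otimes M$ with $P_0$ of rank one and $M\in\bbC^3\otimes\bbC^3$ of rank $\le 2$, so it has rank $\le 3$. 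Thus $\rk(S)=4$ forces $[S]$ out of all four loci, and $[S]\in\overline{\{\rk=4\}}=\calX_{233}^\vee$ because the rank-$4$ locus is dense in the dual variety.

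For the ``if'' part I would start by checking that such an $S$ is necessarily concise: otherwise it lies, up to coordinates, in $\Sub_{133}\cup\Sub_{223}\cup\Sub_{232}$, and $\Sub_{133}\subseteq\sigma_2(X_{233})\cup J^\circ(X_{233},\mathbb{P}^1\times\sigma_2(X_{3,3}))$, since a $3\times 3$ matrix of rank $\le 2$ has border rank $\le 2$ while one of rank $3$ is a sum of a rank-one and a rank-$2$ matrix giving two distinct points of the join --- all excluded. Then, by the Kronecker classification in \cite{BL,parfenov2001orbits}, the concise orbits of $\bbC^2\otimes\bbC^3\otimes\bbC^3$ are $\calO_{13},\dots,\calO_{18}$; among them $\calO_{13},\calO_{15},\calO_{16},\calO_{17}$ have rank $4$ and lie in $\calX_{233}^\vee$, the orbit $\calO_{18}$ is not contained in $\calX_{233}^\vee$ (its hyperdeterminant is the nonzero discriminant of $u(u+v)v$), and $\calO_{14}$ has rank $3$. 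Finally $\calO_{14}\subseteq J^\circ(X_{233},\mathbb{P}^1\times\sigma_2(X_{3,3}))$: the normal form $T_{14}=\bfe_1^1\otimes(\bfe_1^2\otimes\bfe_1^3+\bfe_2^2\otimes\bfe_2^3)+\bfe_2^1\otimes\bfe_3^2\otimes\bfe_3^3$ is the sum of a point of $X_{233}$ and of $\bfe_1^1\otimes M$ with $M$ of rank $2$, two distinct points, and the join is invariant under $\GL_2\times\GL_3\times\GL_3$. So $[S]\notin J^\circ$ implies $[S]\notin\calO_{14}$, whence $S\in\calO_{13}\cup\calO_{15}\cup\calO_{16}\cup\calO_{17}$, each of rank $4$; this proves the claim.

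The main obstacle I expect is this bookkeeping rather than any single estimate: one must combine the classification of \cite{BL,parfenov2001orbits} with the elementary maximal-rank facts to pin down exactly which orbits lie in $\calX_{233}^\vee$, to check that among the concise ones only $\calO_{14}$ fails to have rank $4$, and to verify that $\calO_{14}$ together with all of $\Sub_{133}$ is absorbed by the $\sigma_2$ and $J^\circ$ terms --- which is precisely why the statement is phrased with $J^\circ(X_{233},\mathbb{P}^1\times\sigma_2(X_{3,3}))$ and not with its closure $\overline{\calO_{14}}=J(X_{233},\mathbb{P}^1\times\sigma_2(X_{3,3}))$.
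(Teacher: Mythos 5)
Your proof is correct and follows essentially the same route as the paper: the paper derives the proposition from the three conditions (conciseness, membership in $\calX_{233}^\vee$, avoidance of the rank-$3$ orbit $\calO_{14}$) stated informally in the paragraphs preceding it, and you spell out both implications and the translation between that form and the statement's excluded loci, in particular verifying that $\Sub_{133}$ and $\calO_{14}$ are absorbed by $\sigma_2(X_{233})\cup J^\circ(X_{233},\bbP^1\times\sigma_2(X_{3,3}))$. The only difference is one of rigor, not approach, so this is a faithful and complete version of the paper's argument.
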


In order to make this description effective, we need the following lemma which is straightforward and should be classically known: we include here a proof for sake of completeness.

\begin{lemma}\label{lemma:rank4_characterization}
    Let $T \in \bbC^2\otimes\bbC^3\otimes\bbC^3$ be a concise tensor in $\calX_{233}^\vee$. Let $\ell_T \subset \PP (\bbC^3\otimes\bbC^3)$ be the line corresponding to the pencil of $3\times 3$ matrices associated to $T$. Denote $G = \GL_2 \times \GL_3 \times \GL_3$. Then:
    \begin{enumerate}
        \item if $T \in G \cdot T_{13}$: $\ell_T \subset \sigma_2(X_{33})$;
        \item if $T \in G \cdot T_{15}$: the support of $\ell_T \cap \sigma_2(X_{33})$ is a single point of rank $1$;
        \item if $T \in G \cdot T_{16}$: the support of $\ell_T \cap \sigma_2(X_{33})$ is a single point of rank $2$;
        \item if $T \in G \cdot T_{14}$: the support of the intersection $\ell_T \cap \sigma_2(X_{33})$ are two distinct points and $\ell_T \cap X_{33} \neq \emptyset$;
        \item if $T \in G \cdot T_{17}$: the support of the intersection $\ell_T \cap \sigma_2(X_{33})$ are two distinct points and $\ell_T \cap X_{33} = \emptyset$.
    \end{enumerate}
\end{lemma}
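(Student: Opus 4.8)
The plan is to reduce each of the five assertions to an explicit computation with the pencils of $3\times 3$ matrices attached to the normal forms in \Cref{table:kronecker}. The key preliminary observation is that all the data occurring in items (1)--(5) — the line $\ell_T$, the supports of $\ell_T\cap\sigma_2(X_{33})$ and of $\ell_T\cap X_{33}$, and the ranks of the matrices corresponding to the points of these intersections — depend only on the $G$-orbit of $T$. Indeed, since $T$ is concise, $\flatt_1(T)$ is injective and $\ell_T=\PP\bigl(\im \flatt_1(T)\bigr)$; the $\GL_2$-factor acting on the first tensor factor leaves this $2$-plane, hence $\ell_T$, unchanged, while a pair $(P,Q)\in\GL_3\times\GL_3$ acts on $\bbC^3\otimes\bbC^3\cong\Mat_{3\times3}(\bbC)$ by $M\mapsto PMQ^{\mathsf{T}}$, that is, by a linear automorphism of $\PP(\bbC^3\otimes\bbC^3)=\PP^8$ which takes lines to lines and preserves the rank of a matrix, hence preserves $X_{33}$, $\sigma_2(X_{33})=\{\det=0\}$, and all intersection multiplicities with a line. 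Therefore it suffices to verify (1)--(5) for the representatives $T_{13},T_{15},T_{16},T_{14},T_{17}$ respectively.

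For each such $T$ I would write its pencil $uA_0+vA_1$ of $3\times3$ matrices (as listed in \Cref{table:kronecker}) and compute the binary cubic $\det(uA_0+vA_1)\in\bbC[u,v]$; since $\sigma_2(X_{33})=\{\det=0\}$, its zero locus on $\ell_T\cong\PP^1$ is exactly the scheme $\ell_T\cap\sigma_2(X_{33})$, and $\ell_T\cap X_{33}$ consists of those points of its support whose matrix has rank $1$. One finds: $\det(uA_0+vA_1)\equiv 0$ for $T_{13}$, so $\ell_T\subseteq\sigma_2(X_{33})$; $\det(uA_0+vA_1)$ a nonzero multiple of $u^3$ for $T_{15}$ and for $T_{16}$, so the support of $\ell_T\cap\sigma_2(X_{33})$ is the single point $[A_1]$, of rank $1$ in the first case and of rank $2$ in the second; and $\det(uA_0+vA_1)$ a nonzero multiple of $u^2v$ for $T_{14}$ and for $T_{17}$, so the support consists of the two distinct points $[A_0]$ and $[A_1]$, of ranks $1$ and $2$ in the case $T_{14}$ (whence $\ell_T\cap X_{33}\neq\emptyset$) and both of rank $2$ in the case $T_{17}$ (whence $\ell_T\cap X_{33}=\emptyset$). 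These are precisely the statements (1)--(5).

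I do not expect a genuine obstacle: after the reduction, everything amounts to an elementary $3\times3$ determinant and rank evaluation on explicit matrices. The only step that needs a little care is the $G$-invariance reduction itself — one must check that each geometric quantity in the statement is truly a function of the orbit of the line $\ell_T$ under $\GL_3\times\GL_3$, which is exactly the remark above that this group acts by rank-preserving linear automorphisms of $\Mat_{3\times3}(\bbC)$. As a consistency check, one may note that since $T\in\calX_{233}^\vee$ the hyperdeterminant of $T$ vanishes, and by the Schl\"afli description recalled above this hyperdeterminant is the discriminant of $\det(uA_0+vA_1)$; hence that binary cubic must either vanish identically or have a repeated root, in agreement with the shapes $0$, $u^3$ and $u^2v$ found above.
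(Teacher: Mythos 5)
Your proof is correct and takes essentially the same route as the paper: reduce by $G$-invariance to the Kronecker normal forms, compute the binary cubic $\det(uA_0+vA_1)$ for each, and read off the intersection scheme with $\sigma_2(X_{33})$ and the ranks at the support points. You flesh out the $G$-invariance reduction a bit more carefully than the paper (noting that $\GL_2$ on the first factor fixes $\ell_T$ while $\GL_3\times\GL_3$ acts by rank-preserving projective linear maps), and you add a nice sanity check via Schl\"afli, but the substance of the argument is the same.
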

\begin{proof}
    Since the Segre variety $X_{33}$ and its secant varieties are closed under the action of $\GL_3 \times \GL_3$, it is enough to prove the claim for $T \in \{T_{13}, T_{14}, T_{15}, T_{16}, T_{17}\}$.
    
    The pencils corresponding to $T_{13}, \ldots , T_{17}$ are associated to the following matrices depending on $(u:v) \in \bbP^1$:
     \[
        \ell_{T_{13}} : \left(\begin{smallmatrix}
           u & v & 0 \\
           0 & 0 & u \\
           0 & 0 & v
        \end{smallmatrix}\right);
        \quad 
         \ell_{T_{14}}: \left(\begin{smallmatrix}
           u & 0 & 0 \\
           0 & u & 0 \\
           0 & 0 & v
        \end{smallmatrix}\right),
        \quad
         \ell_{T_{15}}: \left(\begin{smallmatrix}
           u & v & 0 \\
           0 & u & 0 \\
           0 & 0 & u
        \end{smallmatrix}\right),
        \quad
         \ell_{T_{16}}: \left(\begin{smallmatrix}
           u & v & 0 \\
           0 & u & v \\
           0 & 0 & u
        \end{smallmatrix}\right),
        \quad 
        \ell_{T_{17}}: \left(\begin{smallmatrix}
            u & v & 0 \\
            0 & u & 0 \\
            0 & 0 & v
        \end{smallmatrix}\right).
     \]
     Then:
     \begin{enumerate}
         \item the determinant of the matrix associated to $\ell_{T_{13}}$ is identically $0$ and then $\ell_{T_{13}} \subset \sigma_2(X_{33})$.
         \item[(2)-(3)] The determinant of the matrices associated to $\ell_{T_{15}}$ and $\ell_{T_{16}}$ is $u^3$, i.e., the pencils $\ell_{T_{15}}$ and $\ell_{T_{16}}$ intersect $\sigma_2(X_{33})$ only for $(u:v) = (0:1)$. In the first case it corresponds to a rank-one matrix, in the second case it corresponds to a rank-two matrix.
         \item[(4)-(5)] The determinant of the matrices associated to $\ell_{T_{14}}$ and $\ell_{T_{17}}$ is $u^2v$, i.e., the pencils $\ell_{T_{14}}$ and $\ell_{T_{17}}$ intersect $\sigma_2(X_{33})$ for $(u:v) \in \{(0:1),(1:0)\}$. However, the pencil $\ell_{T_{14}}$ contains a point of rank-one for $(u:v) = (0:1)$ while the ideal of $2 \times 2$ minors of the matrix associated to $\ell_{T_{17}}$ is the irrelevant ideal $(u,v)$, i.e., $\ell_{T_{17}} \cap X_{3,3} = \emptyset$. \qedhere
    \end{enumerate}
\end{proof}

\begin{algorithm}[htb]
\caption{Test if a rank-$1$ tensor lies in the forbidden locus of a concise rank-$4$ tensor of $\bbC^2 \otimes \bbC^3 \otimes \bbC^3$}
\flushleft{
\textbf{Input:} }
\begin{itemize}
    \item Rank-$4$ concise tensor $T \in \bbC^2 \otimes \bbC^3 \otimes \bbC^3$.
    \item Rank-one tensor $P \in \bbC^2 \otimes \bbC^3 \otimes \bbC^3$.
\end{itemize}
\flushleft{
\textbf{Output:} Whether $P \in \calF_T$ or $P \not\in \calF_T$.
}

\flushleft{
\textbf{Procedure:}
}
\begin{enumerate}
    \item[{\it Step 1.}] Compute the principal ideal $I := (f) \in \bbC[\lambda]$ of maximal minors of all flattenings of $T-\lambda P$.
    \begin{itemize}
        \item If $I \neq (1)$ then $T-\lambda P$ is not concise for all roots of $f$, \\ hence \textbf{return} $P \not\in \calF_T$;
        \item otherwise, if $I = (1)$ then $T-\lambda P$ is concise for all $\lambda$ and we proceed to \textit{Step 2}.
    \end{itemize}
    \item[{\it Step 2.}]\label{(2)} Compute the hyperdeterminant of $T-\lambda P$, i.e., $H_{233}(T-\lambda P) \in \bbC[\lambda]$. 
        \begin{itemize}
        \item If $H_{233}(T-\lambda p)\not\equiv 0$ then $T-\lambda P \not\in \calX_{233}^\vee$ for all $\lambda$'s different from the roots, \\ hence \textbf{return} $P \not\in \calF_T$;
        \item otherwise, $T-\lambda P \subset X_{233}^\vee$ for all $\lambda$'s and we proceed to \textit{Step 3}.
    \end{itemize}
    \item[{\it Step 3.}] Arrived at this point, we know that $T- \lambda P \subset \calX_{233}^\vee$ and all its points are concise. We need to distinguish if some of them belongs to the orbit $\calO_{14}$ or not. We apply \Cref{lemma:rank4_characterization}.

    Let $M_{T-\lambda P} \in {\rm Mat_{3 \times 3}}(\bbC[u,v][\lambda])$ be the pencil of matrices corresponding to $T-\lambda P$ for any $\lambda$. \\ We compute the determinant $\det(T-\lambda P) := \det(M_{T-\lambda P}) \in \bbC[u,v][\lambda]$ and the ideal of $2 \times 2$ minors $I_2(T-\lambda P) := I_2(M_{T-\lambda P}) \subset \bbC[u,v][\lambda]$.

    \begin{itemize}
        \item If, for some $\lambda \neq 0$, we have that $I_2(T-\lambda P) \neq (1)$ and $\det(T-\lambda P)$ is neither zero or a pure cube, then $P \not\in\calF_T$;
        \item otherwise, \textbf{return} $P \in \calF_T$.
    \end{itemize}
  \end{enumerate}
\label{algo:233}
\end{algorithm}

We show here how to use \Cref{algo:233} to give a symbolic description of the forbidden loci for tensors in the orbits $\calO_{13},\calO_{15},\calO_{16},\calO_{17}$, respectively.
{\begin{notation*}
    Given multihomogeneous polynomials $f_1,\ldots,f_m \in \bbC[\bfa]\otimes\bbC[\bfb]\otimes\bbC[\bfc]$, we denote by $Z(f_1,\ldots,f_m)$ the zero-locus of $f_i$'s as a subspace of the set of rank-one tensors. 
\end{notation*}}
\subsubsection{Orbit n.\protect\idref{13}}\label{ssec:n13} Consider the generator of the orbit 
\[
    T_{13} = \bfe^1_1\otimes(\bfe_1^2\otimes \bfe_1^3+\bfe_2^2\otimes \bfe_3^3)+\bfe_2^1\otimes (\bfe_1^2\otimes \bfe_2^3+\bfe_3^2 \otimes \bfe_3^3). 
\]
Let $P = \bfa \otimes \bfb \otimes \bfc$.

{\textit{Step 1.}} The ideal of maximal minors of all flattenings of $T_{13} - \lambda P$ is
\[
    I = \left(b_3c_2, b_2c_2, b_3c_1, b_2c_1,\lambda(a_1b_1c_1+a_2b_1c_2+a_1b_2c_3+a_2b_3c_3)-1\right) \subset \bbC[\bfa,\bfb,\bfc][\lambda].
\]
Note that the latter generator is exactly $\lambda \langle T_{13}^*,P \rangle -1$ while 
\[
    (b_3c_2, b_2c_2, b_3c_1, b_2c_1) = (b_2,b_3) \cap (c_1,c_2).
\]
A necessary condition for $P \in \calF_{T_{13}}$, is $I = (1) \subset \bbC[\lambda]$, then 
\begin{equation}\label{eq:orbit13_firstinclusion}
    \calF_{T_{13}} \subset  \left[\bbC^2 \otimes \bbC^3 \otimes \bbC^3 \smallsetminus \left (Z(b_2,b_3) \cup Z(c_1,c_2)\right)\right] \cup V(T_{13}^*).
\end{equation}

{\textit{Step 2.}} Another necessary condition $P \in \calF_{T_{13}}$ is that $T_{13} - \lambda P$, interpreted as a pencil in $\bbC^3\otimes\bbC^3$ is contained in the dual variety $\calX^\vee_{33}$ which is defined by the vanishing of the hyperdeterminant. As already recalled, it is defined by the discriminant of the cubic polynomial given by the determinant of $T_{13} - \lambda P$:
\begin{align*}
    H_{233}(T_{13}-\lambda P) = -\lambda ^{4}\left(b_{3}c_{1}-b_{2}c_{2}\right)^{2}\left(a_{1}c_{1}+a_{2}c_{2}\right)^{2}\left(a_{1}b_{2}+a_{2}b_{3}\right)^{2}.
\end{align*}
Hence,
\begin{align*}\label{eq:orbit13_secondinclusion}
    \calF_{T_{13}} \subset  & \left[\left(Z(b_3c_1-b_2c_2) \cup Z(a_1c_1+a_2c_2) \cup Z(a_1b_2+a_2b_3)\right) \smallsetminus \left (Z(b_2,b_3) \cup Z(c_1,c_2)\right)\right] \\
   &  \cup \left[\left(Z(b_3c_1-b_2c_2) \cup Z(a_1c_1+a_2c_2) \cup Z(a_1b_2+a_2b_3)\right) \cap V(T_{13}^*) \right].
\end{align*}

{\textit{Step 3.}} Now, we consider $T-\lambda P$ as a pencil of $3 \times 3$ matrices: its determinant is
\[
\det(T_{13}  - \lambda P) = \lambda \left(c_{2}u-c_{1}v\right)\left(b_{3}u-b_{2}v\right)\left(a_{1}u+a_{2}v\right).
\]
This is identically a pure cube if and only if $P \in Z(b_3c_1-b_2c_2) \cap Z(a_1c_1+a_2c_2) \cap Z(a_1b_2+a_2b_3)$. In such a case, we know that $P \in \calF_T$ by \Cref{lemma:rank4_characterization}.

In order to understand the remaining cases, we study the intersection of the pencil with the Segre $X_{33}$: the radical ideal of $2\times 2$ minors decomposes as 
\[
(u,v) \cap \left(b_{3}c_{1}-b_{2}c_{2},\,c_{2}u-c_{1}v,\,b_{3}u-b_{2}v, \lambda(a_{1}b_{1}c_{1}+a_{2}b_{1}c_{2}+a_{1}b_{2}c_{3}+a_{2}b_{3}c_{3})-1\right);
 \]
in other words, if $P \in Z(b_3c_1 - b_2c_2) \smallsetminus V(T_{13}^*)$, then $T- \lambda P \in \calO_{14}$ for some $\lambda$ and $P \not\in \calF_{T_{13}}$. In all the other cases, the pencil does not meet $X_{33}$ and $P \in \calF_{T_{13}}$. {Hence, we get the following.
\begin{corollary}  
    The forbidden locus of the tensor $T_{13} \in \bbC^2 \otimes \bbC^3 \otimes \bbC^3$ defined in \Cref{table:kronecker} is
    \begin{align*}
    \calF_{T_{13}} =  & \left[\left(Z(a_1c_1+a_2c_2) \cup Z(a_1b_2+a_2b_3)\right) \smallsetminus \left (Z(b_2,b_3) \cup Z(c_1,c_2)\right)\right] \\
   &  \cup \left[\left(Z(b_3c_1-b_2c_2) \cup Z(a_1c_1+a_2c_2) \cup Z(a_1b_2+a_2b_3)\right) \cap V(T_{13}^*) \right].
    \end{align*}
\end{corollary}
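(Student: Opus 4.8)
The plan is to reduce the description of $\calF_{T_{13}}$ to the three conditions that, by \cite{BL} (see \Cref{prop:rank4_233}), cut out the rank-$4$ tensors inside $\bbC^2\otimes\bbC^3\otimes\bbC^3$, and then to read each of them off the explicit computations of \textit{Steps 1--3} above. Since $T_{13}$ is concise of rank $4$ and the maximal rank in $\bbC^2\otimes\bbC^3\otimes\bbC^3$ equals $4$, a point $P=\bfa\otimes\bfb\otimes\bfc$ lies in $\calF_{T_{13}}$ precisely when, for every $\lambda\neq0$, the tensor $T_{13}-\lambda P$ is (a) concise, (b) contained in $\calX_{233}^\vee$, and (c) not in the orbit $\calO_{14}$. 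First I would dispatch (a) and (b): by \textit{Step 1}, conciseness fails for some $\lambda$ exactly when $b_2c_1=b_2c_2=b_3c_1=b_3c_2=0$ and $\langle T_{13}^*,P\rangle\neq0$ (the offending value being $\lambda=\langle T_{13}^*,P\rangle^{-1}$), so (a) holds for all $\lambda$ iff $P\notin Z(b_2,b_3)\cup Z(c_1,c_2)$ or $P\in V(T_{13}^*)$; and by the factorisation $H_{233}(T_{13}-\lambda P)=-\lambda^4(b_3c_1-b_2c_2)^2(a_1c_1+a_2c_2)^2(a_1b_2+a_2b_3)^2$ of \textit{Step 2}, (b) holds for all $\lambda$ iff $P\in Z(b_3c_1-b_2c_2)\cup Z(a_1c_1+a_2c_2)\cup Z(a_1b_2+a_2b_3)$. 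Intersecting these gives the inclusion for $\calF_{T_{13}}$ displayed just before \textit{Step 3}.

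The heart of the proof is condition (c), which I would settle by applying \Cref{lemma:rank4_characterization} to the pencil of $3\times3$ matrices attached to $T_{13}-\lambda P$, of determinant $\lambda(c_2u-c_1v)(b_3u-b_2v)(a_1u+a_2v)$. For a concise tensor of $\calX_{233}^\vee$, being in $\calO_{14}$ is equivalent to this binary cubic being of type $u^2v$ (one linear factor doubled, a second distinct factor simple, all nonzero) \emph{and} the corresponding line meeting $X_{33}$; the complementary configurations are $\det\equiv0$ (orbit $\calO_{13}$), $\det$ a pure cube (orbits $\calO_{15},\calO_{16}$), and $\det$ of type $u^2v$ with the line disjoint from $X_{33}$ (orbit $\calO_{17}$), all of rank $4$. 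Since $c_2u-c_1v$ and $b_3u-b_2v$ are proportional exactly when $b_3c_1=b_2c_2$, and then $a_1u+a_2v$ is distinct from them exactly when $a_1b_2+a_2b_3\neq0$ (equivalently $a_1c_1+a_2c_2\neq0$, off $Z(b_2,b_3)\cup Z(c_1,c_2)$), the $u^2v$-type requirement pins down $b_3c_1=b_2c_2$ together with $a_1b_2+a_2b_3\neq0$; and from the decomposition of the radical of the $2\times2$ minors recorded in \textit{Step 3}, the line meets $X_{33}$ for some $\lambda\neq0$ iff $b_3c_1=b_2c_2$ and $P\notin V(T_{13}^*)$. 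Hence $T_{13}-\lambda P\in\calO_{14}$ for some $\lambda\neq0$ if and only if $b_3c_1=b_2c_2$, $a_1b_2+a_2b_3\neq0$, and $P\notin V(T_{13}^*)$. I expect this to be the main obstacle: disentangling $\calO_{14}$ from $\calO_{17}$, which share the same determinant type, forces one to bring in the $2\times2$ minors and the scalar $\langle T_{13}^*,P\rangle$; everything else is linear algebra and bookkeeping.

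Finally I would assemble the answer. Removing the $\calO_{14}$-locus just described from the Step-2 inclusion, and using that on the complement of $Z(b_2,b_3)\cup Z(c_1,c_2)$ one has $Z(b_3c_1-b_2c_2)\cap Z(a_1c_1+a_2c_2)\subseteq Z(a_1b_2+a_2b_3)$ (write $b_2=tc_1$, $b_3=tc_2$), the spurious factor $Z(b_3c_1-b_2c_2)$ in the first bracket survives only where $Z(a_1c_1+a_2c_2)\cup Z(a_1b_2+a_2b_3)$ or $V(T_{13}^*)$ is already present, and a short set-theoretic manipulation absorbs the part of $\big(Z(a_1c_1+a_2c_2)\cup Z(a_1b_2+a_2b_3)\big)\smallsetminus\big(Z(b_2,b_3)\cup Z(c_1,c_2)\big)$ lying in $V(T_{13}^*)$ into the second bracket, yielding the stated set as an inclusion $\subseteq$. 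For the reverse inclusion I would take $P$ in the claimed set and run the equivalences forward: (a) holds by \textit{Step 1}, (b) by the factorisation of $H_{233}$, and the $\calO_{14}$ criterion fails because either $a_1b_2+a_2b_3=0$, or $b_3c_1\neq b_2c_2$, or $P\in V(T_{13}^*)$; hence $\rk(T_{13}-\lambda P)=4$ for all $\lambda$, i.e.\ $P\in\calF_{T_{13}}$.
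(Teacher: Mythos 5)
Your proposal is correct and follows the same three-step procedure as the paper (flattening minors, hyperdeterminant, then pencil analysis via \Cref{lemma:rank4_characterization}), and your set-theoretic assembly at the end is slightly more explicit than the paper's — in particular you correctly spell out that the $\calO_{14}$ criterion requires $a_1b_2+a_2b_3\neq 0$ in addition to $b_3c_1=b_2c_2$ and $P\notin V(T_{13}^*)$, and that $Z(b_3c_1-b_2c_2)\cap Z(a_1c_1+a_2c_2)\subseteq Z(a_1b_2+a_2b_3)$ off $Z(b_2,b_3)\cup Z(c_1,c_2)$, which is what makes the spurious $Z(b_3c_1-b_2c_2)$ term disappear from the first bracket. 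The only imprecision is the sentence ``the $u^2v$-type requirement pins down $b_3c_1=b_2c_2$ together with $a_1b_2+a_2b_3\neq0$'': the determinant can also be of $u^2v$ type when, say, $a_1c_1+a_2c_2=0$ and $b_3c_1\neq b_2c_2$; it is the requirement of \emph{meeting} $X_{33}$ (your next clause) that forces $b_3c_1=b_2c_2$, after which the $u^2v$ shape forces $a_1b_2+a_2b_3\neq0$ — so the conclusion of your iff is right, but the logical order of the two clauses should be swapped.
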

}
\subsubsection{Orbit n.\protect\idref{15}}\label{ssec:n15} Consider the generator of the orbit
\[
T_{15} = \bfe_1^1\otimes (\bfe_1^2\otimes \bfe_1^3+\bfe_2^2\otimes \bfe_2^3+\bfe_3^2\otimes \bfe_3^3)+\bfe_2^1\otimes \bfe_1^2\otimes \bfe_2^3.
\]
Let $P = \bfa \otimes \bfb \otimes \bfc$. 

{\it Step 1.} The ideal of maximal minors of all flattenings of $T_{15}-\lambda P$ is
\[
    \left(a_{2}c_{3},\,b_{2}c_{1},\,a_{2}c_{1},\,a_{2}b_{3},\,a_{2}b_{2},\lambda(a_{1}b_{1}c_{1}+a_{2}b_{1}c_{2}+a_{1}b_{2}c_{2}+a_{1}b_{3}c_{3})-1\right) \subset \bbC[\bfa,\bfb,\bfc].
\]
Note that the latter generator is exactly $\lambda \langle T_{15}^*,P\rangle -1$ while \[
(a_{2}c_{3},\,b_{2}c_{1},\,a_{2}c_{1},\,a_{2}b_{3},\,a_{2}b_{2}) = (b_2,b_3,c_1,{c_3}) \cap (a_2,b_2) \cap (a_2,{c_1}).\]
Hence, a necessary condition for $P \in \calF_{T_{15}}$ is $I = (1) \subset \bbC[\lambda]$, then 
\begin{equation}\label{eq:orbit15_firstinclusion}
    \calF_{T_{15}} \subset  \left[\bbC^2 \otimes \bbC^3 \otimes \bbC^3 \smallsetminus \left (Z(b_2,b_3,c_1,{c_3}) \cup Z(a_2,b_2) \cup Z(a_2,{c_1})\right)\right] \cup V(T_{15}^*).
\end{equation}

{\it Step 2.} Another necessary condition for $P \in \calF_{T_{15}}$ is that the line $T_{15}-\lambda P$ is all contained in the dual variety $\calX_{33}^\vee$. Restricting the hyperdeterminant at such pencil, we get:
\begin{equation}\label{eq:orbit15_hyper}
    H_{233}(T_{15}-\lambda P) = a_2^2b_2^2c_1^2\lambda^3(D\cdot\lambda - 4a_2b_2c_1) \in \CC[\bfa,\bfb,\bfc][\lambda]
 \end{equation}
 where 
 \begin{align*}
     D = a_{2}^{2}b_{1}^{2}c_{1}^{2}&+2a_{1}a_{2}b_{1}b_{2}c_{1}^{2}+a_{1}^{2}b_{2}^{2}c_{1}^{2}+2a_{2}^{2}b_{1}b_{2}c_{1}c_{2}+2a_{1}a_{2}b_{2}^{2}c_{1}c_{2}\\
     &+a_{2}^{2}b_{2}^{2}c_{2}^{2}+2a_{2}^{2}b_{1}b_{3}c_{1}c_{3}+2a_{1}a_{2}b_{2}b_{3}c_{1}c_{3}+2\,a_{2}^{2}b_{2}b_{3}c_{2}c_{3}+a_{2}^{2}b_{3}^{2}c_{3}^{2}.
\end{align*}
The univariate polynomial in \cref{eq:orbit15_hyper} is identically zero if and only if $a_2b_2c_1 = 0$. Hence, we refine \cref{eq:orbit15_firstinclusion} as 
\begin{align}\label{eq:orbit15_secondinclusion}
    \calF_{T_{15}} \subset & \left[ Z(a_2b_2c_1) \smallsetminus (Z(b_2,b_3,c_1,{c_3}) \cup Z(a_2,b_2{c_1}))\right] \nonumber \\
    & \cup \left[ Z(a_2b_2c_1) \cap V(T_{15}^*) \right].
\end{align}

{\it Step 3.} Now, we apply \Cref{lemma:rank4_characterization}. We know that $P \not\in \calF_{T_{15}}$ if $T_{15}-\lambda P \in G \cdot T_{14}$ for some $\lambda$. A necessary condition, is that the pencil associated to $T_{15}-\lambda P$ contains rank-$1$ matrices for some $\lambda$. The radical ideal of $2\times 2$ minors of $T_{15}-\lambda P$, saturated with respect to the ideal $(u,v) $, is 
\[
(u, \lambda a_2b_3c_3, \lambda a_2b_2c_3, \lambda a_2b_3c_1) = (a_2,u) \cap (b_2,\lambda,u) \cap (b_3,b_2,u) \cap (c_1,\lambda,u) \cap (c_3,c_1,u).
\]
Since, we only need to consider $\lambda \neq 0$, we have only three cases in which, to fully apply \Cref{lemma:rank4_characterization}, we need to check also the intersection of the pencil with $\sigma_2(X_{33})$. Indeed, if $a_2 = 0$ or $b_2 = b_3 = 0$ or $c_1 = c_3 = 0$, then, for every $\lambda$, we have that the point $(0:1)$ of the pencil has rank equal to $1$. 

The determinant of the pencil $T_{15}- \lambda P$ becomes:
\begin{itemize}
    \item for $a_2 = 0$: 
    \[
     \det (T_{15}-\lambda P) = \left(-\lambda(\,a_{1}b_{1}c_{1}+\,a_{1}b_{2}c_{2}+\,a_{1}b_{3}c_{3})+1\right)u^{3}+\lambda\,a_{1}b_{2}c_{1}u^{2}v
          \]
    which is never identically zero, but it is identically a pure cube if $a_1b_2c_1 = 0$;
    \item for $b_2 = b_3 = 0$: 
    \[
    \det (T_{15}-\lambda P) = \left(-\lambda a_1b_1c_1 + 1\right)u^{3}-\lambda\,a_{2}b_{1}c_{1}u^{2}v
    \]
    which is never identically zero, but it is identically a pure cube if $a_2b_1c_1 = 0$;
    \item for $c_1 = c_3 = 0$: 
    \[
    \det (T_{15}-\lambda P) = \left(-\lambda a_1b_2c_2 + 1\right)u^{3}-\lambda\,a_{2}b_{2}c_{2}u^{2}v
    \]
    which is never identically zero, but it is identically a pure cube if $a_2b_2c_2 = 0$.
\end{itemize}
{Hence, we get the following.
\begin{corollary}  
    The forbidden locus of the tensor $T_{15} \in \bbC^2 \otimes \bbC^3 \otimes \bbC^3$ defined in \Cref{table:kronecker} is
    \begin{align*}
        \calF_{T_{15}} = & \Big[\left[ Z(a_2b_2c_1) \smallsetminus (Z(b_2,b_3,c_1,{c_3}) \cup Z(a_2,b_2{c_1}))\right] \cup \left[ Z(a_2b_2c_1) \cap V(T_{15}^*) \right]\Big] \\
        & \smallsetminus \Big[ [Z(a_2) \smallsetminus Z(a_1b_2c_1)] \cup [Z(b_2,b_3) \smallsetminus Z(a_2b_1c_1)] \cup [Z(c_1,c_3) \smallsetminus Z(a_2b_2c_2) ]\Big].
    \end{align*}
\end{corollary}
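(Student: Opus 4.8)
The plan is to recognize the statement as the output of \Cref{algo:233} applied to $T=T_{15}$, so the proof reduces to assembling the three symbolic computations (Steps~1--3) already carried out in this subsection and to explaining why their combination yields an equality and not merely a containment. For the equality I would invoke \Cref{prop:rank4_233} (which in turn rests on the rank classification of \cite{BL}): a rank-one tensor $P$ lies in $\calF_{T_{15}}$ if and only if, for \emph{every} $\lambda\in\bbC$, the tensor $T_{15}-\lambda P$ is concise, lies on $\calX_{233}^\vee$, and does not lie in the orbit $G\cdot T_{14}$. Steps~1, 2 and 3 characterize these three conditions respectively, so intersecting the three loci recovers $\calF_{T_{15}}$ exactly.

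First I would record Step~1: $T_{15}-\lambda P$ stays concise for all $\lambda$ precisely on the complement of $Z(b_2,b_3,c_1,c_3)\cup Z(a_2,b_2)\cup Z(a_2,c_1)$, together with all of $V(T_{15}^*)$, since on $V(T_{15}^*)$ the generator $\lambda\langle T_{15}^*,P\rangle-1$ is the unit $-1$ and the flattening ideal in $\bbC[\lambda]$ becomes trivial; this is \cref{eq:orbit15_firstinclusion}. Next, Step~2: the hyperdeterminant restricted to the pencil $T_{15}-\lambda P$ equals the univariate polynomial \cref{eq:orbit15_hyper} in $\lambda$, which vanishes identically if and only if $a_2b_2c_1=0$. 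Intersecting with Step~1 and using the identity $Z(a_2,b_2)\cup Z(a_2,c_1)=Z(a_2,b_2c_1)$ gives \cref{eq:orbit15_secondinclusion}. These two steps are routine.

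The substantive part is Step~3, where I would use \Cref{lemma:rank4_characterization} to excise the $P$'s for which $T_{15}-\lambda P\in G\cdot T_{14}$ for some $\lambda$. The value $\lambda=0$ is harmless because $T_{15}$ has rank $4$ and $T_{14}$ has rank $3$, so only $\lambda\neq0$ matters. For a concise tensor already on $\calX_{233}^\vee$, lying in $G\cdot T_{14}$ is equivalent, by \Cref{lemma:rank4_characterization}(4), to its associated $3\times3$ pencil meeting $X_{33}$ in a rank-one point and meeting $\sigma_2(X_{33})$ in two distinct points, i.e., to the binary cubic $\det(M_{T_{15}-\lambda P})$ having a double root and a simple root rather than a triple root (or being identically zero). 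Saturating the ideal of $2\times2$ minors of the pencil shows the rank-one point can occur only in the three regimes $a_2=0$, $b_2=b_3=0$, $c_1=c_3=0$; in each regime I would inspect the (explicitly computed) factorization of $\det(M_{T_{15}-\lambda P})$ to detect exactly when, for some $\lambda\neq0$, it acquires two distinct roots. This produces the locus $[Z(a_2)\smallsetminus Z(a_1b_2c_1)]\cup[Z(b_2,b_3)\smallsetminus Z(a_2b_1c_1)]\cup[Z(c_1,c_3)\smallsetminus Z(a_2b_2c_2)]$ that must be removed, and subtracting it from \cref{eq:orbit15_secondinclusion} yields the asserted formula.

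The step I expect to be the main obstacle is the root-pattern analysis inside Step~3: one must be careful, regime by regime, to separate the triple-root (pure-cube, or identically-zero) subcase --- in which \Cref{lemma:rank4_characterization}(1)--(3) keeps the tensor in $G\cdot T_{13}$, $G\cdot T_{15}$ or $G\cdot T_{16}$, all of rank $4$, so that $P$ remains forbidden --- from the double-plus-simple subcase which genuinely drops into $\calO_{14}$, and to check that the relevant $\lambda$ can be taken nonzero. The remaining bookkeeping, namely the set-theoretic combination of Steps~1--3 and the interaction of the excised locus with $V(T_{15}^*)$, is straightforward once these ingredients are assembled.
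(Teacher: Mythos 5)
Your proposal is correct and reproduces the paper's own argument: it runs Steps~1--3 of \Cref{algo:233} (the flattening minors, the hyperdeterminant, and the \Cref{lemma:rank4_characterization} analysis of $\calO_{14}$-membership in the three regimes $a_2=0$, $b_2=b_3=0$, $c_1=c_3=0$), and justifies equality via \Cref{prop:rank4_233}. One small imprecision worth flagging: the parenthetical ``i.e.'' equating membership in $G\cdot T_{14}$ with $\det(M_{T_{15}-\lambda P})$ having a double-plus-simple root pattern drops the $\ell_T\cap X_{33}\neq\emptyset$ condition that distinguishes $\calO_{14}$ from $\calO_{17}$ in \Cref{lemma:rank4_characterization}, but you immediately restore it via the saturated $2\times 2$-minors computation, so the argument is intact and matches the paper's.
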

}

\subsubsection{Orbit n.\protect\idref{16}}\label{ssec:n16}
Consider the generator of the orbit
\[
T_{16} = \bfe_1^1\otimes (\bfe_1^2\otimes \bfe_1^3+\bfe_2^2\otimes \bfe_2^3+\bfe_3^2\otimes \bfe_3^3)+\bfe_2^1\otimes (\bfe_1^2\otimes \bfe_2^3+\bfe_2^2\otimes \bfe_3^3).
\]
Let $P = \bfa \otimes \bfb \otimes \bfc$. 

{\it Step 1.} The ideal of maximal minors of all flattenings of $T_{16} - \lambda P$ is
\[
    \left(a_{2},\,b_{3}c_{2},\,b_{2}c_{2},\,b_{3}c_{1},\,b_{2}c_{1}, \lambda(a_{1}b_{1}c_{1}+a_1b_2c_2+a_{1}b_{3}c_{3}+a_2b_1c_2+a_2b_2c_3)-1\right).
\]
Note that the latter generator is exactly $\lambda \langle T_{16}^*,P \rangle -1$ while 
\[
(a_{2},\,b_{3}c_{2},\,b_{2}c_{2},\,b_{3}c_{1},\,b_{2}c_{1}) = (a_2,b_2,b_3) \cap (a_2,{c_1},c_3). 
\]
Hence, 
\begin{equation}\label{eq:orbit16_firstinclusion}
    \calF_{T_{16}} \subset [\bbC^2\otimes\bbC^3\otimes\bbC^3 \smallsetminus {(Z(a_2,b_2,b_3)\cup Z(a_2,c_1,c_3))}] 
    \cup V(T_{16}^*).
\end{equation}

{\it Step 2.} Another necessary condition is that $T_{16}-\lambda P$ is contained in $\calX_{233}^\vee$ for all $\lambda$'s. Evaluating the hyperdeterminant at $T_{16}-\lambda P$ we get
\[
    H_{233}(T_{16}-\lambda P) = \lambda^2(A \lambda^2 + B \lambda + C)
 \]
 where
 {\begin{footnotesize}
 \begin{align*}
     A & = a_{2}^{4}b_{1}^{2}b_{2}^{2}c_{1}^{4}+2\,a_{1}a_{2}^{3}b_{1}b_{2}^{3}c_{1}^{4}+a_{1}^{2}a_{2}^{2}b_{2}^{4}c_{1}^{4}-4\,a_{2}^{4}b_{1}^{3}b_{3}c_{1}^{4}-8\,a_{1}a_{2}^{3}b_{1}^{2}b_{2}b_{3}c_{1}^{4}-2\,a_{1}^{2}a_{2}^{2}b_{1}b_{2}^{2}b_{3}c_{1}^{4}+2\,a_{1}^{3}a_{2}b_{2}^{3}b_{3}c_{1}^{4}\\
     &-8\,a_{1}^{2}a_{2}^{2}b_{1}^{2}b_{3}^{2}c_{1}^{4}-8\,a_{1}^{3}a_{2}b_{1}b_{2}b_{3}^{2}c_{1}^{4}+a_{1}^{4}b_{2}^{2}b_{3}^{2}c_{1}^{4}-4\,a_{1}^{4}b_{1}b_{3}^{3}c_{1}^{4}+2\,a_{2}^{4}b_{1}b_{2}^{3}c_{1}^{3}c_{2}+2\,a_{1}a_{2}^{3}b_{2}^{4}c_{1}^{3}c_{2}-10\,a_{2}^{4}b_{1}^{2}b_{2}b_{3}c_{1}^{3}c_{2}\\
     &-10\,a_{1}a_{2}^{3}b_{1}b_{2}^{2}b_{3}c_{1}^{3}c_{2}+2\,a_{1}^{2}a_{2}^{2}b_{2}^{3}b_{3}c_{1}^{3}c_{2}-8\,a_{1}a_{2}^{3}b_{1}^{2}b_{3}^{2}c_{1}^{3}c_{2}-20\,a_{1}^{2}a_{2}^{2}b_{1}b_{2}b_{3}^{2}c_{1}^{3}c_{2}-2\,a_{1}^{3}a_{2}b_{2}^{2}b_{3}^{2}c_{1}^{3}c_{2}\\
    &-8\,a_{1}^{3}a_{2}b_{1}b_{3}^{3}c_{1}^{3}c_{2}-2\,a_{1}^{4}b_{2}b_{3}^{3}c_{1}^{3}c_{2}+a_{2}^{4}b_{2}^{4}c_{1}^{2}c_{2}^{2}-8\,a_{2}^{4}b_{1}b_{2}^{2}b_{3}c_{1}^{2}c_{2}^{2}-2\,a_{1}a_{2}^{3}b_{2}^{3}b_{3}c_{1}^{2}c_{2}^{2}+a_{2}^{4}b_{1}^{2}b_{3}^{2}c_{1}^{2}c_{2}^{2}\\
    &-10\,a_{1}a_{2}^{3}b_{1}b_{2}b_{3}^{2}c_{1}^{2}c_{2}^{2}-6\,a_{1}^{2}a_{2}^{2}b_{2}^{2}b_{3}^{2}c_{1}^{2}c_{2}^{2}-2\,a_{1}^{2}a_{2}^{2}b_{1}b_{3}^{3}c_{1}^{2}c_{2}^{2}-2\,a_{1}^{3}a_{2}b_{2}b_{3}^{3}c_{1}^{2}c_{2}^{2}+a_{1}^{4}b_{3}^{4}c_{1}^{2}c_{2}^{2}-2\,a_{2}^{4}b_{2}^{3}b_{3}c_{1}c_{2}^{3}\\
     &+2\,a_{2}^{4}b_{1}b_{2}b_{3}^{2}c_{1}c_{2}^{3}-2\,a_{1}a_{2}^{3}b_{2}^{2}b_{3}^{2}c_{1}c_{2}^{3}+2\,a_{1}a_{2}^{3}b_{1}b_{3}^{3}c_{1}c_{2}^{3}+2\,a_{1}^{2}a_{2}^{2}b_{2}b_{3}^{3}c_{1}c_{2}^{3}+2\,a_{1}^{3}a_{2}b_{3}^{4}c_{1}c_{2}^{3}+a_{2}^{4}b_{2}^{2}b_{3}^{2}c_{2}^{4}\\
     &+2\,a_{1}a_{2}^{3}b_{2}b_{3}^{3}c_{2}^{4}+a_{1}^{2}a_{2}^{2}b_{3}^{4}c_{2}^{4}+2\,a_{2}^{4}b_{1}b_{2}^{2}b_{3}c_{1}^{3}c_{3}+2\,a_{1}a_{2}^{3}b_{2}^{3}b_{3}c_{1}^{3}c_{3}-12\,a_{2}^{4}b_{1}^{2}b_{3}^{2}c_{1}^{3}c_{3}-16\,a_{1}a_{2}^{3}b_{1}b_{2}b_{3}^{2}c_{1}^{3}c_{3}\\
     &-2\,a_{1}^{2}a_{2}^{2}b_{2}^{2}b_{3}^{2}c_{1}^{3}c_{3}-16\,a_{1}^{2}a_{2}^{2}b_{1}b_{3}^{3}c_{1}^{3}c_{3}-8\,a_{1}^{3}a_{2}b_{2}b_{3}^{3}c_{1}^{3}c_{3}-4\,a_{1}^{4}b_{3}^{4}c_{1}^{3}c_{3}+2\,a_{2}^{4}b_{2}^{3}b_{3}c_{1}^{2}c_{2}c_{3}-20\,a_{2}^{4}b_{1}b_{2}b_{3}^{2}c_{1}^{2}c_{2}c_{3}\\
     &-10\,a_{1}a_{2}^{3}b_{2}^{2}b_{3}^{2}c_{1}^{2}c_{2}c_{3}-16\,a_{1}a_{2}^{3}b_{1}b_{3}^{3}c_{1}^{2}c_{2}c_{3}-20\,a_{1}^{2}a_{2}^{2}b_{2}b_{3}^{3}c_{1}^{2}c_{2}c_{3}-8\,a_{1}^{3}a_{2}b_{3}^{4}c_{1}^{2}c_{2}c_{3}-8\,a_{2}^{4}b_{2}^{2}b_{3}^{2}c_{1}c_{2}^{2}c_{3}\\
     &+2\,a_{2}^{4}b_{1}b_{3}^{3}c_{1}c_{2}^{2}c_{3}-10\,a_{1}a_{2}^{3}b_{2}b_{3}^{3}c_{1}c_{2}^{2}c_{3}-2\,a_{1}^{2}a_{2}^{2}b_{3}^{4}c_{1}c_{2}^{2}c_{3}+2\,a_{2}^{4}b_{2}b_{3}^{3}c_{2}^{3}c_{3}+2\,a_{1}a_{2}^{3}b_{3}^{4}c_{2}^{3}c_{3}+a_{2}^{4}b_{2}^{2}b_{3}^{2}c_{1}^{2}c_{3}^{2}\\
     &-12\,a_{2}^{4}b_{1}b_{3}^{3}c_{1}^{2}c_{3}^{2}-8\,a_{1}a_{2}^{3}b_{2}b_{3}^{3}c_{1}^{2}c_{3}^{2}-8\,a_{1}^{2}a_{2}^{2}b_{3}^{4}c_{1}^{2}c_{3}^{2}-10\,a_{2}^{4}b_{2}b_{3}^{3}c_{1}c_{2}c_{3}^{2}-8\,a_{1}a_{2}^{3}b_{3}^{4}c_{1}c_{2}c_{3}^{2}+a_{2}^{4}b_{3}^{4}c_{2}^{2}c_{3}^{2}-4\,a_{2}^{4}b_{3}^{4}c_{1}c_{3}^{3}; \\ ~ \\
    B & = -4\,a_{2}^{3}b_{2}^{3}c_{1}^{3}+18\,a_{2}^{3}b_{1}b_{2}b_{3}c_{1}^{3}-6\,a_{1}a_{2}^{2}b_{2}^{2}b_{3}c_{1}^{3}+36\,a_{1}a_{2}^{2}b_{1}b_{3}^{2}c_{1}^{3}+6\,a_{1}^{2}a_{2}b_{2}b_{3}^{2}c_{1}^{3}+4\,a_{1}^{3}b_{3}^{3}c_{1}^{3}+6\,a_{2}^{3}b_{2}^{2}b_{3}c_{1}^{2}c_{2}\\
     &+18\,a_{2}^{3}b_{1}b_{3}^{2}c_{1}^{2}c_{2}+24\,a_{1}a_{2}^{2}b_{2}b_{3}^{2}c_{1}^{2}c_{2}+6\,a_{1}^{2}a_{2}b_{3}^{3}c_{1}^{2}c_{2}+6\,a_{2}^{3}b_{2}b_{3}^{2}c_{1}c_{2}^{2}-6\,a_{1}a_{2}^{2}b_{3}^{3}c_{1}c_{2}^{2}-4\,a_{2}^{3}b_{3}^{3}c_{2}^{3}\\
     &+18\,a_{2}^{3}b_{2}b_{3}^{2}c_{1}^{2}c_{3}+36\,a_{1}a_{2}^{2}b_{3}^{3}c_{1}^{2}c_{3}+18\,a_{2}^{3}b_{3}^{3}c_{1}c_{2}c_{3}; \\ ~ \\
     C & = -27a_2^2b_3^2c_1^2. 
 \end{align*}
 \end{footnotesize}}
Note that 
 \begin{align*}
     \sqrt{(A,B,C)}^{\rm sat} & = (b_3c_1, a_2b_3c_2, a_2b_2c_1)  \\
     & = (a_2,b_3) \cap (a_2,c_1) \cap (b_2,b_3) \cap (b_3,c_1) \cap (c_1,c_2),
 \end{align*}
 where the saturation is intended with respect to the irrelevant ideal of the multigraded ring $\bbC[\bfa]\otimes\bbC[\bfb]\otimes\bbC[\bfc]$. Hence, if we restrict \eqref{eq:orbit16_firstinclusion} with this condition, we deduce 
 \begin{align}
    \calF_{T_{16}} \subset ~ & [Z(b_3c_1,a_2b_3c_2,a_2b_2c_1) \smallsetminus {(Z(a_2,b_2,b_3)\cup Z(a_2,c_1,c_3))}]%(Z(a_2,b_3,c_2) \cup Z(a_2,c_2,c_3))] 
    \nonumber \\ & \cup [Z(b_3c_1,a_2b_3c_2,a_2b_2c_1) 
 \cap V(T_{16}^*)].
 \end{align}

 {\it Step 3.} Now, we apply \Cref{lemma:rank4_characterization}. If $T_{16} - \lambda P$ belongs to the orbit n.\protect\idref{14} for some $\lambda$, then the pencil of $3 \times 3$ matrices given by $T_{16} - \lambda P$ contains a rank-one matrix. The radical ideal of $2\times 2$ minors of $T_{16}-\lambda P$, saturated with respect to $(u,v)$, is 
  \[
 (c_1,b_3,u,\lambda(a_2b_1c_2 +a_2b_2c_3)-1).
    \]
 Hence, by \Cref{lemma:rank4_characterization}, under the assumptions $c_1 = b_3 = 0$ and $a_2(b_1c_2+b_2c_3) \neq 0$, we need to understand the intersection of the pencil with the secant variety of $X_{33}$. In particular, under these conditions, the determinant of the pencil $T-\lambda P$ is 
 \[
 \left(-\lambda\,a_{1}b_{2}c_{2}+1\right)u^{3}-\lambda\,a_{2}b_{2}c_{2}u^{2}v
 \]
 which is identically a pure cube if $a_2b_2c_2 =0$.

 Note that $(b_3c_1,a_2b_3c_2,a_2b_2c_1) = \left(b_{3}c_{1},\,a_{2}b_{3}c_{2},\,a_{2}b_{2}c_{2},\,a_{2}b_{2}c_{1}\right) \cap (b_3,c_1)$.

{Therefore, we conclude the following.
\begin{corollary}  
    The forbidden locus of the tensor $T_{16} \in \bbC^2 \otimes \bbC^3 \otimes \bbC^3$ defined in \Cref{table:kronecker} is
    \begin{align}
        \calF_{T_{16}}  =~ & \Big[[Z(b_3c_1,a_2b_3c_2,a_2b_2c_1) \smallsetminus {(Z(a_2,b_2,b_3)\cup Z(a_2,c_1,c_3))} ] 
        \cup [Z(b_3c_1,a_2b_3c_2,a_2b_2c_1) 
        \cap V(T_{16}^*)]\Big] \nonumber \\
        & \smallsetminus [Z(b_3,c_1) \smallsetminus Z(a_2b_1c_2+a_2b_2c_3, a_2b_2c_2)].
    \end{align}
\end{corollary}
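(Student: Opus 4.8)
The plan is to run \Cref{algo:233} symbolically on $T_{16}$ against a general rank-one tensor $P=\bfa\otimes\bfb\otimes\bfc$ and read off the output. By \Cref{prop:rank4_233} (equivalently, by the correctness of \Cref{algo:233}, which rests on the classification of \cite{BL} and on \Cref{lemma:rank4_characterization}), $\calF_{T_{16}}$ is exactly the set of $P$ passing all three tests, so the corollary follows by combining the three constructible conditions produced by \emph{Steps 1--3}. The three displayed computations above are the backbone; what remains is to justify the primary decompositions invoked and to assemble the set-theoretic operations carefully.

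For \emph{Step 1}, substituting $P$ into the ideal of maximal minors of all flattenings of $T_{16}-\lambda P$ turns the generators $a_2,\,b_3c_2,\,b_2c_2,\,b_3c_1,\,b_2c_1$ into scalars; since $(a_2,b_3c_2,b_2c_2,b_3c_1,b_2c_1)=(a_2,b_2,b_3)\cap(a_2,c_1,c_3)$, the resulting ideal of $\bbC[\lambda]$ is the unit ideal unless $P\in Z(a_2,b_2,b_3)\cup Z(a_2,c_1,c_3)$, and on that locus the only surviving generator $\lambda\langle T_{16}^*,P\rangle-1$ is a unit of $\bbC[\lambda]$ exactly when $P\notin V(T_{16}^*)$; this yields the inclusion \eqref{eq:orbit16_firstinclusion}. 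For \emph{Step 2}, I would evaluate the hyperdeterminant $H_{233}$ on the pencil $T_{16}-\lambda P$ (as the discriminant of $\det(uA_0+vA_1)$, by the Schl\"afli method recalled above), factor out $\lambda^2$, and observe that the residual quadratic polynomial $A\lambda^2+B\lambda+C$ in $\lambda$ vanishes identically iff $P\in Z(A,B,C)$; a symbolic computation of $\sqrt{(A,B,C)}^{\mathrm{sat}}=(b_3c_1,a_2b_3c_2,a_2b_2c_1)$, saturated with respect to the irrelevant ideal of $\bbC[\bfa]\otimes\bbC[\bfb]\otimes\bbC[\bfc]$, then refines the inclusion accordingly.

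Finally, for \emph{Step 3}, by \Cref{lemma:rank4_characterization} a point $P$ surviving Steps 1--2 lies outside $\calF_{T_{16}}$ precisely when $T_{16}-\lambda P\in\calO_{14}$ for some $\lambda\neq0$, that is, when the pencil of $3\times3$ matrices attached to $T_{16}-\lambda P$ meets $X_{33}$ while its determinant is neither zero nor a pure cube. Saturating the radical ideal of $2\times2$ minors of the pencil with respect to $(u,v)$ gives $(c_1,b_3,u,\lambda(a_2b_1c_2+a_2b_2c_3)-1)$, so the pencil meets $X_{33}$ for some $\lambda\neq0$ exactly on $Z(b_3,c_1)\smallsetminus Z(a_2b_1c_2+a_2b_2c_3)$; on that locus the pencil determinant equals $(-\lambda a_1b_2c_2+1)u^3-\lambda a_2b_2c_2u^2v$, which is a pure cube iff $a_2b_2c_2=0$. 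Removing the resulting stratum from the Step-2 set, and using $(b_3c_1,a_2b_3c_2,a_2b_2c_1)=(b_3c_1,a_2b_3c_2,a_2b_2c_2,a_2b_2c_1)\cap(b_3,c_1)$ to check that this stratum is already contained in it, gives the claimed equality. The main obstacle is \emph{Step 2}: $H_{233}(T_{16}-\lambda P)$ is a bulky symbolic expression (the form $A$ alone has dozens of monomials), so both its computation and the identification of $\sqrt{(A,B,C)}^{\mathrm{sat}}$ must be done in Macaulay2; the other delicate point is the set-theoretic bookkeeping in \emph{Step 3}, where one has to separate the rank-$3$ orbit $\calO_{14}$ from the rank-$4$ orbits $\calO_{15},\calO_{16}$ that share the same pencil-degeneration pattern.
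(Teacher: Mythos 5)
Your proposal reproduces the paper's own route step for step: it runs \Cref{algo:233} symbolically on $T_{16}$, using the flattening-minor ideal of \emph{Step~1}, the hyperdeterminant together with $\sqrt{(A,B,C)}^{\mathrm{sat}}$ in \emph{Step~2}, and the $\calO_{14}$-test via \Cref{lemma:rank4_characterization} in \emph{Step~3}. The displayed ideals, the pure-cube criterion, and the ideal identity $(b_3c_1,a_2b_3c_2,a_2b_2c_1)=(b_3c_1,a_2b_3c_2,a_2b_2c_2,a_2b_2c_1)\cap(b_3,c_1)$ all match the paper.

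The gap is in the closing sentence, ``Removing the resulting stratum from the Step-2 set \ldots\ gives the claimed equality'': that equality is asserted rather than checked, and it does not hold. The stratum your \emph{Step~3} correctly identifies as removable is
\[
 V \;=\; Z(b_3,c_1)\,\smallsetminus\,\bigl(Z(a_2b_1c_2+a_2b_2c_3)\cup Z(a_2b_2c_2)\bigr),
\]
that is, $b_3=c_1=0$ together with \emph{both} $a_2(b_1c_2+b_2c_3)\neq 0$ (pencil meets $X_{33}$) \emph{and} $a_2b_2c_2\neq 0$ (determinant not a pure cube); only then does $T_{16}-\lambda P$ land in $\calO_{14}$ for some $\lambda$. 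The corollary instead subtracts $W = Z(b_3,c_1)\smallsetminus Z(a_2b_1c_2+a_2b_2c_3,\,a_2b_2c_2)$, which only requires that at least one of the two quantities be nonzero, so $W\supsetneq V$. The difference is nontrivial on the Step-2 set: take $a_2=b_2=c_2=1$ and $a_1=b_1=b_3=c_1=c_3=0$. This rank-one $P$ lies in the Step-2 set and in $W$, hence is excluded by the corollary's formula; but $a_2(b_1c_2+b_2c_3)=0$, so by your own Step-3 analysis the pencil of $T_{16}-\lambda P$ never meets $X_{33}$, $T_{16}-\lambda P$ stays in $\calO_{17}$ for all $\lambda$, and $P\in\calF_{T_{16}}$. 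Thus either the corollary's removed set should read $Z(b_3,c_1)\smallsetminus\bigl(Z(a_2b_1c_2+a_2b_2c_3)\cup Z(a_2b_2c_2)\bigr)$, or the claimed coincidence of the two removals needs a proof; as written, the final step of your proposal does not go through.
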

}

\subsubsection{Orbit n.\protect\idref{17}}\label{ssec:n17}
Consider the generator of the orbit 
\[
T_{17} = \bfe_1^1\otimes (\bfe_1^2\otimes \bfe_1^3+\bfe_2^2\otimes \bfe_2^3)+\bfe_2^1\otimes (\bfe_1^2\otimes \bfe_2^3+\bfe_3^2\otimes \bfe_3^3).
\]
Let $P = \bfa \otimes \bfb \otimes \bfc$.

\textit{Step 1.} The ideal of maximal minors of all flattenings of $T_{17}-\lambda P$ is 
\[
 \Big[(a_1,b_1,b_2) \cap (a_2,b_2,b_3) \cap (a_1,c_1,c_2) \cap (a_2,c_1,c_3)\Big]
 + \left(\lambda\langle T_{17}^*,P\rangle - 1\right).
\]
Hence, 
\begin{align}\label{eq:orbit17_firstinclusion}
    \calF_{T_{17}} \subset \Big[\bbC^2\otimes\bbC^3\otimes\bbC^3 \smallsetminus \left( Z(a_1,b_1,b_2) \cup Z(a_2,b_2,b_3) \cup Z(a_1,c_1,c_2) \cup Z(a_2,c_1,c_3)\right)\Big] \cup V(T_{17}^*).
\end{align}

\textit{Step 2.} We evaluate the hyperdeterminant at $T_{17}-\lambda P$: 
\[
    H_{233}(T_{17}-\lambda P)  = \lambda (A\lambda^3 + B\lambda^2 + C\lambda + D)
 \]
 where
 {\footnotesize
 \begin{align*}
     A & = a_{1}^{2}a_{2}^{2}b_{1}^{4}c_{1}^{4}+2\,a_{1}^{3}a_{2}b_{1}^{3}b_{2}c_{1}^{4}+a_{1}^{4}b_{1}^{2}b_{2}^{2}c_{1}^{4}+4\,a_{1}^{2}a_{2}^{2}b_{1}^{3}b_{2}c_{1}^{3}c_{2}+ 6\,a_{1}^{3}a_{2}b_{1}^{2}b_{2}^{2}c_{1}^{3}c_{2}+2\,a_{1}^{4}b_{1}b_{2}^{3}c_{1}^{3}c_{2}\\
     & +6\,a_{1}^{2}a_{2}^{2}b_{1}^{2}b_{2}^{2}c_{1}^{2}c_{2}^{2}+6\,a_{1}^{3}a_{2}b_{1}b_{2}^{3}c_{1}^{2}c_{2}^{2}+a_{1}^{4}b_{2}^{4}c_{1}^{2}c_{2}^{2}+4\,a_{1}^{2}a_{2}^{2}b_{1}b_{2}^{3}c_{1}c_{2}^{3}+2\,a_{1}^{3}a_{2}b_{2}^{4}c_{1}c_{2}^{3}+a_{1}^{2}a_{2}^{2}b_{2}^{4}c_{2}^{4}\\
     &-2\,a_{1}a_{2}^{3}b_{1}^{3}b_{3}c_{1}^{3}c_{3}+2\,a_{1}^{2}a_{2}^{2}b_{1}^{2}b_{2}b_{3}c_{1}^{3}c_{3}+8\,a_{1}^{3}a_{2}b_{1}b_{2}^{2}b_{3}c_{1}^{3}c_{3}+4\,a_{1}^{4}b_{2}^{3}b_{3}c_{1}^{3}c_{3}-6\,a_{1}a_{2}^{3}b_{1}^{2}b_{2}b_{3}c_{1}^{2}c_{2}c_{3}\\
     &+4\,a_{1}^{2}a_{2}^{2}b_{1}b_{2}^{2}b_{3}c_{1}^{2}c_{2}c_{3}+8\,a_{1}^{3}a_{2}b_{2}^{3}b_{3}c_{1}^{2}c_{2}c_{3}-6\,a_{1}a_{2}^{3}b_{1}b_{2}^{2}b_{3}c_{1}c_{2}^{2}c_{3}+2\,a_{1}^{2}a_{2}^{2}b_{2}^{3}b_{3}c_{1}c_{2}^{2}c_{3}-2\,a_{1}a_{2}^{3}b_{2}^{3}b_{3}c_{2}^{3}c_{3}\\
     &+a_{2}^{4}b_{1}^{2}b_{3}^{2}c_{1}^{2}c_{3}^{2}-8\,a_{1}a_{2}^{3}b_{1}b_{2}b_{3}^{2}c_{1}^{2}c_{3}^{2}-8\,a_{1}^{2}a_{2}^{2}b_{2}^{2}b_{3}^{2}c_{1}^{2}c_{3}^{2}+2\,a_{2}^{4}b_{1}b_{2}b_{3}^{2}c_{1}c_{2}c_{3}^{2}-8\,a_{1}a_{2}^{3}b_{2}^{2}b_{3}^{2}c_{1}c_{2}c_{3}^{2}\\
     &+a_{2}^{4}b_{2}^{2}b_{3}^{2}c_{2}^{2}c_{3}^{2}+4\,a_{2}^{4}b_{2}b_{3}^{3}c_{1}c_{3}^{3}; \\
     ~\\
     B & = -2\,a_{1}a_{2}^{2}b_{1}^{3}c_{1}^{3}-8\,a_{1}^{2}a_{2}b_{1}^{2}b_{2}c_{1}^{3}-2\,a_{1}^{3}b_{1}b_{2}^{2}c_{1}^{3}-6\,a_{1}a_{2}^{2}b_{1}^{2}b_{2}c_{1}^{2}c_{2}-16\,a_{1}^{2}a_{2}b_{1}b_{2}^{2}c_{1}^{2}c_{2}-2\,a_{1}^{3}b_{2}^{3}c_{1}^{2}c_{2}\\
     & -6\,a_{1}a_{2}^{2}b_{1}b_{2}^{2}c_{1}c_{2}^{2}-8\,a_{1}^{2}a_{2}b_{2}^{3}c_{1}c_{2}^{2}-2\,a_{1}a_{2}^{2}b_{2}^{3}c_{2}^{3}-2\,a_{2}^{3}b_{1}^{2}b_{3}c_{1}^{2}c_{3}-2\,a_{1}a_{2}^{2}b_{1}b_{2}b_{3}c_{1}^{2}c_{3}-20\,a_{1}^{2}a_{2}b_{2}^{2}b_{3}c_{1}^{2}c_{3}\\
     & -4\,a_{2}^{3}b_{1}b_{2}b_{3}c_{1}c_{2}c_{3}-2\,a_{1}a_{2}^{2}b_{2}^{2}b_{3}c_{1}c_{2}c_{3}-2\,a_{2}^{3}b_{2}^{2}b_{3}c_{2}^{2}c_{3}-12\,a_{2}^{3}b_{2}b_{3}^{2}c_{1}c_{3}^{2}; \\
     ~\\
     C & = a_{2}^{2}b_{1}^{2}c_{1}^{2}+10\,a_{1}a_{2}b_{1}b_{2}c_{1}^{2}+a_{1}^{2}b_{2}^{2}c_{1}^{2}+2\,a_{2}^{2}b_{1}b_{2}c_{1}c_{2}+10\,a_{1}a_{2}b_{2}^{2}c_{1}c_{2}+a_{2}^{2}b_{2}^{2}c_{2}^{2}+12\,a_{2}^{2}b_{2}b_{3}c_{1}c_{3}; \\
     ~\\
     D & = -4\,a_{2}b_{2}c_{1}.
 \end{align*}
 }
 Note that
 \begin{align*}
    \sqrt{(A,B,C,D)}^{\rm sat} & = \left(b_{2}c_{1},\,a_{2}b_{2}c_{2},\,a_{2}b_{1}c_{1}\right) \\
     & (a_2,b_2) \cap (a_2,c_1) \cap (b_1,b_2) \cap (b_2,c_1) \cap (c_1,c_2),
 \end{align*} 
 where the saturation is intended with respect to the irrelevant ideal of the multigraded ring $\bbC[\bfa]\otimes\bbC[\bfb]\otimes\bbC[\bfc]$.
 Hence, if we restrict \eqref{eq:orbit17_firstinclusion} with this conditions, we deduce
 \begin{align}\label{eq:orbit17_secondinclusion}
    \calF_{T_{17}} \subset ~ & [Z(b_2c_1,a_2b_2c_2,a_2b_1c_1) \smallsetminus (Z(a_1,b_1,b_2) \cup Z(a_2,b_2,b_3) \cup Z(a_1,c_1,c_2) \cup Z(a_2,c_1,c_3))] \nonumber \\ & \cup [Z(b_2c_1,a_2b_2c_2,a_2b_1c_1) 
 \cap V(T_{17}^*)].
 \end{align}
 {\it Step 3.} We apply \Cref{lemma:rank4_characterization}. First we check the intersections of the pencil $T_{17} - \lambda P$ with the Segre $X_{33}$: the radical of the ideal of $2 \times 2$ minors, saturated with respect to  $(u,v)$, is  
 \[ 
  \left(c_{1},\,b_{2},\,u,\,\lambda(a_{2}b_{1}c_{2}+\,a_{2}b_{3}c_{3})-1\right) \cap \left(c_{3},\,c_{1},\,b_{3},\,a_{2},\,v,\,\lambda\,a_{1}b_{2}c_{2}-1\right) \cap \left(c_{3},\,b_{3},\,b_{2},\,a_{2},\,v,\,\lambda\,a_{1}b_{1}c_{1}-1\right).
 \]
 Hence, we have three numerical conditions to check also the intersection of the pencil with $\sigma_2(X_{3,3})$. 

 The determinant of the pencil $T_{17}-\lambda P$ becomes: 
 \begin{itemize}
     \item if $b_2 = c_1 = 0$: 
     \[
      \det(T_{17}-\lambda P) = -\lambda\,a_{1}b_{3}c_{3}u^{3}+\left(-\lambda\,a_{2}b_{3}c_{3}+1\right)u^{2}v
     \]
     which is never identically zero or a pure cube;
     \item if $a_2 = b_3 = c_1 = c_3 = 0$: 
     \[
     \det(T_{17}-\lambda P) = \left(-\lambda\,a_{1}b_{2}c_{2}+1\right)u^{2}v
     \]
     which is never identically zero or a pure cube;
     \item if $a_2 = b_2 = b_3 = c_3 = 0$:
     \[
     \det (T_{17}-\lambda P) = \left(-\lambda\,a_{1}b_{1}c_{1}+1\right)u^{2}v
         \]
     which is never identically zero or a pure cube.
 \end{itemize}
 In conclusion, we have that $T-\lambda P$ never intersects $\calO_{14}$, namely it is all contained in $\calO_{17}$. In particular,  \eqref{eq:orbit17_secondinclusion} is an equality.
{
 \begin{corollary}  
     The forbidden locus of the tensor $T_{17} \in \bbC^2 \otimes \bbC^3 \otimes \bbC^3$ defined in \Cref{table:kronecker} is
     \begin{align}
        \calF_{T_{17}} = \Big[\bbC^2\otimes\bbC^3\otimes\bbC^3 \smallsetminus \left( Z(a_1,b_1,b_2) \cup Z(a_2,b_2,b_3) \cup Z(a_1,c_1,c_2) \cup Z(a_2,c_1,c_3)\right)\Big] \cup V(T_{17}^*).
    \end{align}
 \end{corollary}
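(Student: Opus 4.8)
The statement packages the three-step computation just carried out, so my plan is to derive the two inclusions from it, invoking \Cref{prop:rank4_233} and \Cref{lemma:rank4_characterization}. For ``$\subseteq$'' I would argue that if $P=\bfa\otimes\bfb\otimes\bfc\in\calF_{T_{17}}$, so that $\rk(T_{17}-\lambda P)=4$ for all $\lambda$, then by \Cref{prop:rank4_233} (equivalently, by the three characterizing conditions recalled before \Cref{algo:233}) the whole line $\{[T_{17}-\lambda P]\}_{\lambda}$ must simultaneously be concise, lie in $\calX_{233}^\vee$, and avoid the orbit $\calO_{14}$. Conciseness for every $\lambda$ is exactly the requirement that the principal ideal $I\subset\bbC[\lambda]$ of \emph{Step~1} equal $(1)$; since the $\lambda$-free part of $I$ cuts out $(a_1,b_1,b_2)\cap(a_2,b_2,b_3)\cap(a_1,c_1,c_2)\cap(a_2,c_1,c_3)$, this forces $P$ outside those four zero loci unless $\langle T_{17}^*,P\rangle=0$. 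Next, $[T_{17}-\lambda P]\in\calX_{233}^\vee$ for every $\lambda$ means that the univariate polynomial $H_{233}(T_{17}-\lambda P)=\lambda(A\lambda^3+B\lambda^2+C\lambda+D)$ of \emph{Step~2} vanishes identically, i.e.\ $A=B=C=D=0$; since $\sqrt{(A,B,C,D)}^{\mathrm{sat}}=(b_2c_1,a_2b_2c_2,a_2b_1c_1)$, this is the condition $P\in Z(b_2c_1,a_2b_2c_2,a_2b_1c_1)$. Intersecting the two conditions yields the containment.

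For ``$\supseteq$'' I would take $P$ in the right-hand side and run these equivalences backwards: $P\in Z(b_2c_1,a_2b_2c_2,a_2b_1c_1)$ gives $H_{233}(T_{17}-\lambda P)\equiv 0$, hence $[T_{17}-\lambda P]\in\calX_{233}^\vee$ for all $\lambda$, and the remaining clause makes $I=(1)$, hence $T_{17}-\lambda P$ concise for all $\lambda$. By \Cref{prop:rank4_233}, what then remains is to rule out $[T_{17}-\lambda P]\in\calO_{14}$ for $\lambda\neq 0$ (the case $\lambda=0$ being $T_{17}\in\calO_{17}$). By \Cref{lemma:rank4_characterization} such a membership would force the pencil $\ell_{T_{17}-\lambda P}$ to meet $X_{33}$; computing the radical of the ideal of $2\times2$ minors of $M_{T_{17}-\lambda P}$, saturated at $(u,v)$, isolates the three numerical regimes of \emph{Step~3} in which this is possible, and in each of them I would evaluate $\det M_{T_{17}-\lambda P}$ and use \Cref{lemma:rank4_characterization} to confirm that $\calO_{14}$ is never reached for $\lambda\neq 0$. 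Hence $[T_{17}-\lambda P]\in\calO_{17}$ and $\rk(T_{17}-\lambda P)=4$ for all $\lambda$, so $P\in\calF_{T_{17}}$.

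The bookkeeping in \emph{Steps~1--2} --- intersecting a few monomial ideals and one hyperdeterminant/discriminant evaluation, both readily done in \cite{M2} --- is routine. The step I expect to demand the most care is the last one: showing that the rank never drops, i.e.\ that the border-rank-$3$, rank-$3$ orbit $\calO_{14}$ is never met along the line $T_{17}-\lambda P$. This is exactly where \Cref{lemma:rank4_characterization}, which separates $\calO_{14}$ from $\calO_{17}$ by whether the associated pencil meets the Segre variety $X_{33}$, is used in an essential way, and it is what upgrades the \emph{Step~2} containment into the equality in the statement.
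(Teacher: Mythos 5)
Your proposal follows the paper's own three-step route exactly---conciseness of all flattenings, identical vanishing of $H_{233}$ along the line, and the $\calO_{14}$ check via \Cref{lemma:rank4_characterization}---so there is no methodological disagreement with the paper. However, your ``$\supseteq$'' direction as written silently requires $P\in Z(b_2c_1,a_2b_2c_2,a_2b_1c_1)$, and this is \emph{not} implied by membership in the right-hand side displayed in the corollary. For a generic $P$ avoiding all four loci $Z(a_1,b_1,b_2),\dots,Z(a_2,c_1,c_3)$ and with $\langle T_{17}^*,P\rangle\neq 0$, we have $b_2c_1\neq 0$, hence $D=-4a_2b_2c_1\neq 0$ and $H_{233}(T_{17}-\lambda P)$ is a nonzero polynomial in $\lambda$; for all but finitely many $\lambda$ the tensor $T_{17}-\lambda P$ then lies outside $\calX_{233}^\vee$ and has rank $3$, so $P\in\calD_{T_{17}}$. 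Thus the displayed right-hand side strictly contains $\calF_{T_{17}}$, and your argument cannot establish the reverse inclusion for the statement as printed.

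What your proposal \emph{does} establish---and what the paper's own Step~3 prose asserts by declaring that \eqref{eq:orbit17_secondinclusion} becomes an equality---is
\begin{align*}
    \calF_{T_{17}} = ~& \bigl[Z(b_2c_1,a_2b_2c_2,a_2b_1c_1) \smallsetminus (Z(a_1,b_1,b_2) \cup Z(a_2,b_2,b_3) \cup Z(a_1,c_1,c_2) \cup Z(a_2,c_1,c_3))\bigr] \\
    & \cup \bigl[Z(b_2c_1,a_2b_2c_2,a_2b_1c_1) \cap V(T_{17}^*)\bigr],
\end{align*}
i.e.\ the Step~2 set, not the Step~1 set. The displayed corollary appears to have dropped the hyperdeterminant constraint $Z(b_2c_1,a_2b_2c_2,a_2b_1c_1)$ in transcription. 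You should have flagged this mismatch explicitly, rather than asserting that ``intersecting the two conditions yields the containment'' and then, in the reverse direction, treating the right-hand side as if it already carried the Step~2 constraint that it does not display.
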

 }

 \subsection{Orbits n.\protect\idref{19}, n.\protect\idref{20}, n.\protect\idref{22} and n.\protect\idref{23}: rank-$4$ tensors in $\bbC^2\otimes\bbC^3\otimes\bbC^4$}
In $\mathbb{C}^2\otimes \mathbb{C}^3\otimes \mathbb{C}^4$ concise tensors have rank at least 4 and the maximal rank is 5. Following \cref{table:kronecker}, the normal forms of rank-4 tensors in $\mathbb{C}^2\otimes \mathbb{C}^3\otimes \mathbb{C}^4$ are:
\begin{equation}\label{eq:orbits_234}
\begin{aligned}
    T_{19}=&~\bf{e}_1^1\otimes (\bf{e}_1^2\otimes \bf{e}_1^3+\bf{e}_2^2\otimes \bf{e}_2^3+\bf{e}_3^2\otimes \bf{e}_4^3) + \bf{e}_2^1 \otimes (\bf{e}_1^2\otimes \bf{e}_2^3+\bf{e}_2^2\otimes \bf{e}^3_3),\\
     T_{20}=&~\bf{e}^1_1\otimes (\bf{e}^2_1\otimes \bf{e}^3_1+\bf{e}^2_2\otimes \bf{e}^3_3+\bf{e}^2_3\otimes \bf{e}^3_4) + \bf{e}^1_2 \otimes (\bf{e}^2_1\otimes \bf{e}^3_2),\\
      T_{22}=&~\bf{e}^1_1\otimes (\bf{e}^2_1\otimes \bf{e}^3_1+\bf{e}^2_2\otimes \bf{e}^3_3) + \bf{e}^1_2 \otimes (\bf{e}^2_1\otimes \bf{e}^3_2+\bf{e}^2_3\otimes \bf{e}^3_4),\\
     T_{23}=&~\bf{e}^1_1\otimes (\bf{e}^2_1\otimes \bf{e}^3_1+\bf{e}^2_2\otimes \bf{e}^3_2+\bf{e}^2_3\otimes \bf{e}^3_3) + \bf{e}^1_2 \otimes (\bf{e}^2_1\otimes \bf{e}^3_2+\bf{e}^2_2\otimes \bf{e}^3_3+ \bf{e}^2_3\otimes \bf{e}^3_4).
     \end{aligned}
\end{equation}
Recall that in $\mathbb{C}^2 \otimes \mathbb{C}^3 \otimes \mathbb{C}^4$ there is only one canonical form of rank 5 which is $T_{21}$ in \cref{table:kronecker}. 

We give a general description of the decomposition loci of tensors in the orbits  \eqref{eq:orbits_234}. 

\begin{lemma}\label{lemma:rank4_234}
    Let $T$ be a tensor in the orbits \eqref{eq:orbits_234}. Then, $P = \bfa \otimes \bfb \otimes \bfc \in \calD_T$ if and only if one of the following conditions occurs:
    \begin{enumerate}[label=(\roman*), ref=\thelemma-(\roman*)]
        \item\label{lemma:rank4_234_cond1}  either there exists $\lambda \in \bbC$ such that $T - \lambda P \in \Sub_{134}$; 
        \item\label{lemma:rank4_234_cond2} or there exists $\lambda \in \bbC$ such that  
        \begin{enumerate}
            \item either $T-\lambda P \in \Sub_{223}$; 
            \item or $T - \lambda P \in (\Sub_{233} \smallsetminus \calX^\vee_{233}) \cup (\Sub_{233} \cap \calO_{14})$.
        \end{enumerate}
    \end{enumerate}
\end{lemma}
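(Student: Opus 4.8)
The plan is to read the statement as a structural classification and reduce it to identifying which tensors of the form $T-\lambda P$ have rank exactly $3$. First note that, since $T$ is concise of rank $4$ in $\bbC^2\otimes\bbC^3\otimes\bbC^4$ and $P$ is rank one, subadditivity of rank gives $\rk(T-\lambda P)\ge\rk(T)-1=3$ for every $\lambda\in\bbC$; hence $P\in\calD_T$ if and only if $\rk(T-\lambda P)=3$ for some $\lambda$. I would then prove the ``if'' and ``only if'' directions separately: the former by extracting the upper bound $\rk(T-\lambda P)\le 3$ from each listed condition, the latter by a flattening-rank analysis of $T'=T-\lambda P$.

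For the ``if'' direction: if $T-\lambda P\in\Sub_{134}$, it is up to coordinates a $3\times 4$ matrix, hence of rank $\le 3$; if $T-\lambda P\in\Sub_{223}$, it lies in $\bbC^2\otimes\bbC^2\otimes\bbC^3$, whose maximal rank is $3$ (orbits n.\idref{7} and n.\idref{8}); if $T-\lambda P\in\Sub_{233}\cap\calO_{14}$, it has rank $3$ by definition of $\calO_{14}$; and if $T-\lambda P\in\Sub_{233}\smallsetminus\calX^\vee_{233}$, then up to coordinates it lies in $\bbC^2\otimes\bbC^3\otimes\bbC^3$, and since $\calX^\vee_{233}$ is the closure of the locus of rank-$4$ tensors there (\Cref{ssec:rank4_233}), $T-\lambda P$ cannot have rank $4$ and so has rank $\le 3$. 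In all cases $\rk(T-\lambda P)\le 3$, hence $=3$ by the lower bound, so $P\in\calD_T$.

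For the ``only if'' direction, suppose $\rk(T-\lambda P)=3$ and set $T'=T-\lambda P$. Since $\flatt_3(T)$ has rank $4$ and $\flatt_3(P)$ has rank $1$, we get $\rk\flatt_3(T')\ge 3$, while the third flattening of any rank-$3$ tensor has image spanned by three rank-one matrices, so $\rk\flatt_3(T')\le 3$; thus $\rk\flatt_3(T')=3$ and $T'\in\Sub_{233}$. Similarly $\rk\flatt_2(T')\ge 3-1=2$, and $\rk\flatt_1(T')\in\{1,2\}$ (nonzero because $T'\ne 0$). Now split into the three exhaustive cases. If $\rk\flatt_1(T')=1$, then $T'\in\Sub_{133}\subseteq\Sub_{134}$ and \Cref{lemma:rank4_234_cond1} holds. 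If $\rk\flatt_1(T')=2$ and $\rk\flatt_2(T')=2$, then together with $\rk\flatt_3(T')=3$ we get $T'\in\Sub_{223}$, so \Cref{lemma:rank4_234_cond2}(a) holds. If $\rk\flatt_1(T')=2$ and $\rk\flatt_2(T')=3$, then $T'$ is concise of rank $3$ in $\bbC^2\otimes\bbC^3\otimes\bbC^3$, hence lies in $\calO_{14}$ or $\calO_{18}$ by the classification in \Cref{table:kronecker}; in the first case $T'\in\Sub_{233}\cap\calO_{14}$, and in the second case $\calO_{18}$ is a single $\GL_2\times\GL_3\times\GL_3$-orbit which is dense, hence disjoint from the proper $\GL_2\times\GL_3\times\GL_3$-invariant closed set $\calX^\vee_{233}$, so $T'\in\Sub_{233}\smallsetminus\calX^\vee_{233}$; either way \Cref{lemma:rank4_234_cond2}(b) holds.

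The computations involved are routine; the point requiring care is the geometric bookkeeping. One must invoke that $\calX^\vee_{233}$ is exactly the closure of the rank-$4$ locus and that the concise rank-$3$ tensors of $\bbC^2\otimes\bbC^3\otimes\bbC^3$ are precisely those in $\calO_{14}$ and $\calO_{18}$, both recalled in \Cref{ssec:rank4_233} and in \Cref{table:kronecker}. It is also worth making explicit that $T-\lambda P$ can never have $\rk\flatt_3(T-\lambda P)\le 2$ — this is exactly why the statement contains no $\Sub_{232}$ term, even though $\Sub_{232}$ does contain rank-$3$ tensors of $\bbC^2\otimes\bbC^3\otimes\bbC^4$.
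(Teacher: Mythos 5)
Your proof is correct and follows the same strategy as the paper: reduce to the question of when $\rk(T-\lambda P)=3$ and then classify by which proper subspace of $\bbC^2\otimes\bbC^3\otimes\bbC^4$ the tensor $T-\lambda P$ can fall into, reading off ranks from \Cref{table:kronecker}. The one genuine improvement you offer is the explicit flattening-rank bookkeeping for the ``only if'' direction (in particular the observation that $\rk\flatt_3(T-\lambda P)\geq 3$ forces $T-\lambda P\in\Sub_{233}$ always, which is why $\Sub_{232}$ and $\Sub_{224}$ need not appear separately), making the exhaustiveness of the case division rigorous where the paper leaves it to a citation of \cite{BL}.
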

\begin{proof}
    It follows directly from \cref{table:kronecker} (cf. also \cite[Tables 1, 3 and 4]{BL}). Indeed:
     \begin{itemize}
         \item if $T-\lambda P \in \bbC^1 \otimes \bbC^3 \otimes \bbC^4$, then we reduce the matrix case and $\rk(T-\lambda P) \leq 3$;
         \item if $T-\lambda P \in \bbC^2 \otimes \bbC^2 \otimes \bbC^3$ then its rank is at most $3$, see \Cref{sec:22n};
         \item if $T-\lambda P \in \bbC^2 \otimes \bbC^3 \otimes \bbC^3$ then it has rank at most three unless it belongs to the dual Segre variety $\calX_{233}^\vee$ but not to the orbit $\calO_{14}$, see \Cref{sec:23n}.
     \end{itemize}
     In all other cases $T-\lambda P$ is concise and has rank at least $4$.
\end{proof}

\begin{algorithm}[htb]
\caption{Test if a rank-$1$ tensor lies in the decomposition locus of a rank-$4$ tensor in $\bbC^2 \otimes \bbC^3 \otimes \bbC^4$}
\flushleft{
\textbf{Input:} }
\begin{itemize}
    \item Rank-$4$ concise tensor $T \in \bbC^2 \otimes \bbC^3 \otimes \bbC^4$.
    \item Rank-one tensor $P \in \bbC^2 \otimes \bbC^3 \otimes \bbC^4$.
\end{itemize}
\flushleft{
\textbf{Output:} Whether $P \in \calD_T$ or $P \not\in \calD_T$.
}

\flushleft{
\textbf{Procedure:}
}
\begin{enumerate}
    \item[{\it Step 1.}] Compute the principal ideal $I_1 := (f_1) \in \bbC[\lambda]$ of maximal minors of the first flattening of $T-\lambda P$.
    \begin{itemize}
        \item If $I_1 \neq (1)$ then $T-\lambda P \in \Sub_{134}$ for all roots of $f_1$, \\ hence \textbf{return} $P \in \calD_T$;
        \item otherwise, we proceed to \textit{Step 2}.
    \end{itemize}
    \item[{\it Step 2.}] Compute the principal ideals $I_2 := (f_2), I_3 := (f_3) \in \bbC[\lambda]$ of maximal minors of the second and third flattening of $T-\lambda P$, respectively.
    \begin{itemize}
        \item If $I_3 \neq (1)$, then $\rk(T-\lambda P)$ is at least four for all $\lambda$'s; \\
        then, \textbf{return} $P \not\in \calD_T$;
        \item else, if $f_2$ and $f_3$ have a common root $\lambda$, then $T-\lambda P \in \Sub_{223}$ and it has rank at most $3$; \\
        then, \textbf{return} $P \in \calD_T$;
        \item else, if there exists a root $\lambda$ of $f_3$ such that $H_{233}(T-\lambda P) \neq 0$, then $T-\lambda P \in \Sub_{233} \smallsetminus \calX_{233}^\vee$ and it has rank $3$; \\
        then, \textbf{return} $P \in \calD_T$;
        \item else, if there exists a root $\lambda$ of $f_3$ such that $H_{233}(T-\lambda P) = 0$ but $T-\lambda P \in \calO_{14}$ (which can be checked as explained in \Cref{lemma:rank4_characterization} and \Cref{algo:233}), then it has rank $3$; \\
        then, \textbf{return} $P \in \calD_T$;
        \item else, if none of the above is satisfied,\\ 
        then \textbf{return} $P \not\in \calD_T$.
    \end{itemize}
\end{enumerate}
\label{algo:234}
\end{algorithm}

We use \Cref{algo:234} on each one of the orbits in \eqref{eq:orbits_234}. Similarly to \Cref{ssec:rank4_233}, we use the structure of \Cref{algo:234} for symbolic computations by working on $\bbC[\bfa,\bfb,\bfc]$ and assuming $P = \bfa\otimes\bfb\otimes\bfc$. Note that the application of \Cref{lemma:rank4_characterization} in \textit{Step 2} of \Cref{algo:234} involves a higher level of intricacy compared to its use in \Cref{ssec:rank4_233}. Indeed, once we know the constraints on $P$ and $\lambda$ such that $T-\lambda P \in \Sub_{233}$, then we need to perform the change of coordinates in order to explicitly express $T-\lambda P$ as a tensor in $\bbC^2\otimes\bbC^3\otimes\bbC^3$ and then make the  computations as in \Cref{ssec:rank4_233}. Such a change of coordinates cannot be uniquely chosen, but need to be changed depending on conditions on $\bfa,\bfb,\bfc$.

\subsubsection{Orbit n.\protect\idref{19}}\label{ssec:n19}
Let $T_{19} = \bfe_1^1\otimes (\bfe_1^2\otimes \bfe_1^3+\bfe_2^2 \otimes \bfe_2^3+\bfe_3^2\otimes \bfe_4^3)+\bfe_2^1\otimes (\bfe_1^2\otimes \bfe_2^3+\bfe_2^2\otimes \bfe_3^3)$ 
and $P = \bfa\otimes\bfb\otimes\bfc$.

\textit{Step 1.} The ideal of maximal minors of the first flattening of $T_{19} - \lambda P$ is $I_1 = (1)$. 

\textit{Step 2.} The ideal of maximal minors of the second and third flattening of $T_{19}-\lambda P$ are, respectively:
\begin{align*}
    I_2 & = (c_3,c_2,c_1,a_2,a_1b_3c_4\lambda-1), \\
    I_3 & =(b_2b_3,a_2b_3,a_2b_1-a_1b_2,\lambda(a_1b_1c_1+a_1b_2c_2+a_2b_2c_3+a_1b_3c_4)-1).
\end{align*}
Hence, we have that \[\calD_T \subset Z(b_2b_3,a_2b_3,a_2b_1-a_1b_2) \smallsetminus Z(a_1b_1c_1+a_1b_2c_2+a_2b_2c_3+a_1b_3c_4).\]

In the ring $\bbC[\lambda,\bfa,\bfb,\bfc]/(b_2b_3,a_2b_3,a_2b_1-a_1b_2,\lambda(a_1b_1c_1+a_1b_2c_2+a_2b_2c_3+a_1b_3c_4)-1)$, we get that $I_2 = (a_2,c_1,c_2,c_3)$: hence, by \Cref{lemma:rank4_234}, under these conditions $T_{19}-\lambda P \in \Sub_{223}$ and $P \in \calD_{T_{19}}$.

We need to check under which conditions $T_{19}-\lambda P \in \calX_{233}^\vee$. In the ring $\bbC[\lambda,\bfa,\bfb,\bfc]/(b_2b_3,a_2b_3,a_2b_1-a_1b_2,\lambda(a_1b_1c_1+a_1b_2c_2+a_2b_2c_3+a_1b_3c_4)-1)$, the ideal of maximal minors of the pencil $T_{19}-\lambda P$ of $3 \times 4$ matrices is equal to $(c_3u^3 - c_2u^2v + c_1uv^2)$. 
The discriminant of the principal generator is $c_1^2(4c_1c_3-c_2^2)$. Hence, if $c_1(4c_1c_3-c_2^2) \neq 0$ then $T_{19}-\lambda P \not\in \calX_{233}^\vee$ and $P \in \calD_{T_{19}}$.

Assuming $c_1(4c_1c_3-c_2^2) = 0$, we check if $T_{19}-\lambda P \in \calO_{14}$ by employing \Cref{lemma:rank4_characterization}. The saturation with respect $(u,v) $ of the ideal of $2 \times 2$ minors of the pencil $T_{19}-\lambda P$ is equal to $(b_3,c_1,c_4,u)$. 
Imposing $b_3 = c_1 = c_4 = 0$, the radical ideal of $3 \times 3$ minors becomes $(c_3u^2-c_2uv)$ 
which has two distinct roots unless $c_2 = 0$. 
{Therefore, we deduce the following.
 \begin{corollary}  
     The forbidden locus of the tensor $T_{19} \in \bbC^2 \otimes \bbC^3 \otimes \bbC^4$ defined in \Cref{table:kronecker} is
     \begin{align*}
        \calD_{T_{19}} = & ~\Big[[Z(b_2b_3,a_2b_3,a_2b_1-a_1b_2) \smallsetminus Z(a_1b_1c_1+a_1b_2c_2+a_2b_2c_3+a_1b_3c_4)] \\
        & \qquad\qquad\qquad\qquad\qquad\qquad \smallsetminus \big[Z(c_1(4c_1c_3-c_2^2)) \smallsetminus [Z(b_3,c_1,c_4) \smallsetminus Z(c_2)]\big]\Big] \\
        & \cup \Big[[Z(b_2b_3,a_2b_3,a_2b_1-a_1b_2) \smallsetminus Z(a_1b_1c_1+a_1b_2c_2+a_2b_2c_3+a_1b_3c_4)] \cap Z(a_2,c_1,c_2,c_3)\Big].
    \end{align*}
 \end{corollary}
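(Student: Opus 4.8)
The corollary is a bookkeeping consequence of the computations in \textit{Step 1}--\textit{Step 3} just carried out for $T_{19}$; the plan is to organize those into the asserted set-theoretic formula, tracking at each branch of \Cref{algo:234} which locus in $(\bfa,\bfb,\bfc)$ is produced. By \Cref{lemma:rank4_234}, with $P=\bfa\otimes\bfb\otimes\bfc$, one has $P\in\calD_{T_{19}}$ exactly when there is a $\lambda$ for which $T_{19}-\lambda P$ lies in $\Sub_{134}$, in $\Sub_{223}$, or in $(\Sub_{233}\smallsetminus\calX^\vee_{233})\cup(\Sub_{233}\cap\calO_{14})$, so it suffices to turn each of these events into an explicit condition on $P$.

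First, \textit{Step 1} contributes nothing since $I_1=(1)$, i.e.\ $T_{19}-\lambda P\notin\Sub_{134}$ for every $\lambda$. Next, viewing $I_3$ as an ideal of $\bbC[\lambda]$ after fixing $P$, it is the unit ideal unless $P$ lies in $Z_1:=Z(b_2b_3,a_2b_3,a_2b_1-a_1b_2)$, and there it equals $(\lambda\langle T_{19}^*,P\rangle-1)$, which has a unique root $\lambda_P$ precisely when $P\notin W:=Z(\langle T_{19}^*,P\rangle)$. Hence $\calD_{T_{19}}\subseteq Z_1\smallsetminus W$, and for such $P$ everything reduces to deciding whether the single tensor $T_{19}-\lambda_P P$ --- which is automatically in $\Sub_{233}$ --- has rank $3$ or $4$.

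Then I would run the trichotomy. (i) In the quotient of $\bbC[\lambda,\bfa,\bfb,\bfc]$ by the $I_3$-relations, $I_2$ collapses to $(a_2,c_1,c_2,c_3)$; so $T_{19}-\lambda_P P\in\Sub_{223}$, hence has rank $3$ and $P\in\calD_{T_{19}}$, exactly on $(Z_1\smallsetminus W)\cap Z(a_2,c_1,c_2,c_3)$. (ii) Off that stratum, the $3\times3$ minors of the pencil of $3\times4$ matrices attached to $T_{19}-\lambda_P P$ reduce to the single binary cubic $u(c_3u^2-c_2uv+c_1v^2)$, whose discriminant is $c_1^2(4c_1c_3-c_2^2)$ up to sign; so $T_{19}-\lambda_P P\notin\calX^\vee_{233}$, forcing rank $3$ and $P\in\calD_{T_{19}}$, exactly when $P\notin Z\bigl(c_1(4c_1c_3-c_2^2)\bigr)$. (iii) On the remaining locus $Z\bigl(c_1(4c_1c_3-c_2^2)\bigr)$ one invokes \Cref{lemma:rank4_characterization}: the $(u,v)$-saturated ideal of $2\times2$ minors of the pencil is $(b_3,c_1,c_4,u)$, so the line $\ell_{T_{19}-\lambda_P P}$ meets $X_{33}$ only when $P\in Z(b_3,c_1,c_4)$, and under $b_3=c_1=c_4=0$ the determinant has radical $(c_3u^2-c_2uv)$, whose two roots are distinct iff $P\notin Z(c_2)$; thus $T_{19}-\lambda_P P\in\calO_{14}$, so rank $3$ and $P\in\calD_{T_{19}}$, iff $P\in Z(b_3,c_1,c_4)\smallsetminus Z(c_2)$, and rank $4$ otherwise. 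Putting (i)--(iii) together: inside $Z_1\smallsetminus W$ the complement of $\calD_{T_{19}}$ is $\bigl(Z(c_1(4c_1c_3-c_2^2))\smallsetminus(Z(b_3,c_1,c_4)\smallsetminus Z(c_2))\bigr)\smallsetminus Z(a_2,c_1,c_2,c_3)$, the final subtraction being needed because $Z(a_2,c_1,c_2,c_3)\subseteq Z(c_1(4c_1c_3-c_2^2))$ places the $\Sub_{223}$-stratum inside the vanishing locus of the hyperdeterminant; removing this complement from $Z_1\smallsetminus W$ and simplifying with $A\smallsetminus(C\smallsetminus D)=(A\smallsetminus C)\cup(A\cap D)$ gives exactly the stated expression.

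The genuinely delicate points are the symbolic ones, which I would defer to Macaulay2: verifying that the ideals of maximal minors, of $2\times2$ and of $3\times3$ minors of the pencil, and the restricted hyperdeterminant, all collapse --- after taking radicals and saturating by the irrelevant ideal --- to the compact generators quoted above; and, for the application of \Cref{lemma:rank4_characterization} in (iii), actually presenting $T_{19}-\lambda_P P$ as a bona fide pencil of $3\times3$ matrices, since the change of coordinates realizing $\Sub_{233}$ is not canonical and must be chosen stratum by stratum over the subcases of $(b_3,c_1,c_4)$, just as in the parallel arguments of \Cref{ssec:rank4_233}. This last step is the main obstacle.
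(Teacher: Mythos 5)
Your proposal is correct and follows essentially the same route as the paper: Step 1 is vacuous, Step 2 reduces to $Z(b_2b_3,a_2b_3,a_2b_1-a_1b_2)\smallsetminus V(T_{19}^*)$ with the $\Sub_{223}$ stratum $Z(a_2,c_1,c_2,c_3)$, and Step 3 uses the discriminant $c_1^2(4c_1c_3-c_2^2)$ of $u(c_3u^2-c_2uv+c_1v^2)$ together with \Cref{lemma:rank4_characterization} on $Z(b_3,c_1,c_4)\smallsetminus Z(c_2)$, and the set-theoretic identity $A\smallsetminus(C\smallsetminus D)=(A\smallsetminus C)\cup(A\cap D)$ produces the stated formula. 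The only minor over-caution is the closing worry about choosing an explicit $\Sub_{233}$-coordinate change: the paper avoids this for $T_{19}$ by working directly with the ideals of $3\times3$ and $2\times2$ minors of the $3\times4$ pencil, which already encode the intersection with $\sigma_2(X_{34})$ and $X_{34}$ without any change of coordinates.
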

 }

\subsubsection{Orbit n.\protect\idref{20}}\label{ssec:n20}
Let $T_{20} = \bfe_{1}^1\otimes (\bfe_{1}^2\otimes \bfe_{1}^3+\bfe_{2}^2\otimes \bfe_{3}^3+\bfe_{3}^2\otimes \bfe_{4}^3)+\bfe_{2}^1\otimes \bfe_{1}^2\otimes \bfe_{2}^3$ and $P = \bfa \otimes \bfb \otimes \bfc$. 

\textit{Step 1.} The radical ideal of maximal minors of the first flattening of $T_{20}-\lambda P$ is
\[
    I_1 = \left(c_{4},\,c_{3},\,c_{1},\,b_{3},\,b_{2},\,\lambda a_{2}b_{1}c_{2}-1\right).
\]
Hence, $Z(b_3,{b_2}, c_1,c_3,c_4) \smallsetminus Z(a_1b_1c_2) \subset \calD_{T_{20}}$.

\textit{Step 2.} The radical ideals of maximal minors of second and third flattenings of $T_{20}-\lambda P$ are
\begin{align*}
    I_2 & = \left(c_{2},\,c_{1},\,a_{2},\lambda(a_{1}b_{2}c_{3}+ a_{1}b_{3}c_{4})-1\right),\\
    I_3 & = \left(a_{2}b_{3},\,a_{2}b_{2},\lambda(a_{1}b_{1}c_{1}+a_{2}b_{1}c_{2}+a_{1}b_{2}c_{3}+a_{1}b_{3}c_{4})-1\right),
\end{align*}
respectively.

In the ring $\bbC[\lambda,\bfa,\bfb,\bfc]/(a_{2}b_{3},\,a_{2}b_{2},\lambda(a_{1}b_{1}c_{1}+a_{2}b_{1}c_{2}+a_{1}b_{2}c_{3}+a_{1}b_{3}c_{4})-1)$, we ge that $I_2 = (a_2,c_1,c_2)$: hence, by \Cref{lemma:rank4_234}, under these conditions, $T_{20}-\lambda P\in \Sub_{223}$ and $P \in \calD_{T_{20}}$.

We need to check under which conditions $T_{20}-\lambda P \in \calX_{233}^\vee$. In the ring $\bbC[\lambda,\bfa,\bfb,\bfc]/(a_{2}b_{3},\,a_{2}b_{2},\lambda(a_{1}b_{1}c_{1}+a_{2}b_{1}c_{2}+a_{1}b_{2}c_{3}+a_{1}b_{3}c_{4})-1)$, the ideal of maximal minors of the pencil $T_{20}-\lambda P$ is equal to $(c_2u^3-c_1u^2v)$.   
The discriminant of the generator is identically zero, i.e., $T_{20}-\lambda P \in \calX_{233}^\vee$. We check if it belongs to $\calO_{14}$ by employing \Cref{lemma:rank4_characterization}. The saturation with respect to $(u,v)$ 
of the ideal of $2 \times 2$ minors of the pencil $T_{20}-\lambda P$ is $(u)$:  
hence, the pencil always contains a rank-one matrix. The radical ideal of maximal minors is generated by $c_2u^2-c_1uv$ 
which has two distinct roots unless $c_1 = 0$. {Therefore, we deduce the following.
 \begin{corollary}  
     The forbidden locus of the tensor $T_{20} \in \bbC^2 \otimes \bbC^3 \otimes \bbC^4$ defined in \Cref{table:kronecker} is
     \begin{align*}
    \calD_{T_{20}} =~ & \Big[Z(b_3,{b_2}, c_1,c_3,c_4)\smallsetminus Z(a_1b_1c_2)\Big] \\ & \cup \Big[[Z(a_{2}b_{3},\,a_{2}b_{2}) \smallsetminus V(T^*_{20})
    %Z(a_{1}b_{1}c_{1}+a_{2}b_{1}c_{2}+a_{1}b_{2}c_{3}+a_{1}b_{3}c_{4})
    ] \smallsetminus Z(c_1) \Big] \\
    & \cup \Big[[Z(a_{2}b_{3},\,a_{2}b_{2}) \smallsetminus V(T_{20}^*) %Z(a_{1}b_{1}c_{1}+a_{2}b_{1}c_{2}+a_{1}b_{2}c_{3}+a_{1}b_{3}c_{4})
    ] \cap Z(a_2,c_1,c_2)\Big].
\end{align*}
 \end{corollary}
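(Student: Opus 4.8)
The plan is to prove the formula by executing \Cref{algo:234} symbolically on $T=T_{20}$: write the candidate rank-one tensor as $P=\bfa\otimes\bfb\otimes\bfc$, keep $\lambda$ as an indeterminate, and use \Cref{lemma:rank4_234} to turn the membership $P\in\calD_{T_{20}}$ into a disjunction of polynomial conditions on $(\bfa,\bfb,\bfc)$. First I would compute the ideal of $2\times2$ minors of $\flatt_1(T_{20}-\lambda P)$ in $\bbC[\bfa,\bfb,\bfc][\lambda]$, whose radical is $(c_4,c_3,c_1,b_3,b_2,\,\lambda a_2b_1c_2-1)$; since $\lambda$ occurs linearly, eliminating it shows that there is a $\lambda$ with $T_{20}-\lambda P\in\Sub_{134}$ exactly on $Z(b_2,b_3,c_1,c_3,c_4)$ minus the locus where the coefficient of $\lambda$ vanishes, which is the first term of the claimed union (this accounts for condition (i) of \Cref{lemma:rank4_234}).

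Next I would treat condition (ii) of \Cref{lemma:rank4_234}. Computing the ideals of maximal minors of the second and third flattenings of $T_{20}-\lambda P$, one finds that $\rk(T_{20}-\lambda P)\ge 4$ whenever $\flatt_3$ has full rank, so the relevant sub-locus is cut out by $I_3=(a_2b_3,\,a_2b_2,\,\lambda\langle T_{20}^*,P\rangle-1)$; eliminating $\lambda$ forces $(\bfa,\bfb,\bfc)\in Z(a_2b_3,a_2b_2)\smallsetminus V(T_{20}^*)$, the ambient set for the remaining two terms. Working in $\bbC[\lambda,\bfa,\bfb,\bfc]/I_3$ I would then: (a) detect where $\flatt_2$ also drops rank, where $I_2$ collapses to $(a_2,c_1,c_2)$, so that $T_{20}-\lambda P\in\Sub_{223}$ and hence $P\in\calD_{T_{20}}$ — this yields the term $[Z(a_2b_3,a_2b_2)\smallsetminus V(T_{20}^*)]\cap Z(a_2,c_1,c_2)$; and (b) on the complementary locus, where $T_{20}-\lambda P\in\Sub_{233}$, decide whether $T_{20}-\lambda P$ lies on $\calX_{233}^\vee$ and whether it lies in $\calO_{14}$. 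For (b) I would pass to the pencil of $3\times3$ matrices associated to $T_{20}-\lambda P$: its determinant is the cubic $c_2u^3-c_1u^2v$, whose discriminant vanishes identically, so $T_{20}-\lambda P$ always sits on $\calX_{233}^\vee$; I would then apply \Cref{lemma:rank4_characterization}, using that the saturation of the ideal of $2\times2$ minors with respect to $(u,v)$ equals $(u)$ (the pencil meets $X_{33}$) and that the ideal of $3\times3$ minors, namely $(c_2u^2-c_1uv)$, has two distinct roots precisely when $c_1\ne 0$. Thus for $c_1\ne 0$ we get $T_{20}-\lambda P\in\calO_{14}$, which has rank $3$, so $P\in\calD_{T_{20}}$, while for $c_1=0$ the tensor stays in a rank-$4$ orbit; this produces the term $[Z(a_2b_3,a_2b_2)\smallsetminus V(T_{20}^*)]\smallsetminus Z(c_1)$.

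It then remains to assemble the three contributions and check that their union is literally the claimed set and is exhaustive — i.e. that off this union $T_{20}-\lambda P$ is concise in $\bbC^2\otimes\bbC^3\otimes\bbC^4$ for every $\lambda$ and never falls into $\Sub_{134}$, $\Sub_{223}$, $\Sub_{233}\smallsetminus\calX_{233}^\vee$, or $\Sub_{233}\cap\calO_{14}$ — which is exactly what \Cref{lemma:rank4_234} asserts. I expect step (b) to be the main obstacle: to invoke \Cref{lemma:rank4_characterization} one must genuinely realize $T_{20}-\lambda P$ as a tensor of $\bbC^2\otimes\bbC^3\otimes\bbC^3$ after imposing the relations of $I_3$, which requires a non-canonical change of coordinates that has to be chosen case by case according to which coordinates of $\bfa,\bfb,\bfc$ vanish; keeping track of these branches — together with verifying the radical and saturation computations, which are carried out in Macaulay2 — is where the argument is genuinely delicate.
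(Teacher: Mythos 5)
Your proposal follows exactly the paper's own route: execute Algorithm~4 symbolically on $T_{20}$, invoking Lemma~\ref{lemma:rank4_234} to split $P\in\calD_{T_{20}}$ into the $\Sub_{134}$, $\Sub_{223}$, and $\Sub_{233}$ branches, compute the ideals of maximal minors of the three flattenings, pass to the quotient by $I_3$, compute the determinant of the $3\times 3$ pencil and its discriminant, and finally apply Lemma~\ref{lemma:rank4_characterization} via the saturation of $2\times 2$ minors and the factorization of $c_2u^2-c_1uv$. All the intermediate ideals you give match the ones in the paper, and your reading of the $c_1\neq 0$ dichotomy agrees with the published one. The concluding caveat about the non-canonical change of coordinates needed to realize $T_{20}-\lambda P$ inside $\bbC^2\otimes\bbC^3\otimes\bbC^3$ is precisely the remark the paper makes in the preamble to Section~6.3.

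One small but substantive point: your Step~1 correctly yields $I_1=(c_4,c_3,c_1,b_3,b_2,\lambda a_2b_1c_2-1)$, and eliminating $\lambda$ gives the first term as $Z(b_2,b_3,c_1,c_3,c_4)\smallsetminus Z(a_2b_1c_2)$. The corollary as printed instead removes $Z(a_1b_1c_2)$, which does not match the paper's own displayed ideal and appears to be a typographical slip in the statement (one can also check directly that on $Z(b_2,b_3,c_1,c_3,c_4)$ the second row of $\flatt_1(T_{20}-\lambda P)$ vanishes for $\lambda=1/(a_2b_1c_2)$, so $a_2b_1c_2\neq 0$ is the correct condition). So your derivation is actually the corrected one, and you should flag the discrepancy rather than assert that it ``is the first term of the claimed union'' without comment.
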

 }

\subsubsection{Orbit n.\protect\idref{22}}\label{ssec:n22}
Let $T_{22} = \bfe_1^1\otimes (\bfe_1^2\otimes \bfe_1^3+\bfe_2^2\otimes \bfe_3^3)+\bfe_2^1\otimes (\bfe_1^2\otimes \bfe_2^3+\bfe_3^2\otimes \bfe_4^3)$ and $P = \bfa \otimes \bfb \otimes \bfc$. 

\textit{Step 1.} The radical ideal of maximal minors of the first flattening of $T_{22}-\lambda P$ is $I_1 = (1)$.

\textit{Step 2.} The radical ideals of maximal minors of second and third flattenings of $T_{22}-\lambda P$ are 
\begin{align*}
    I_2 & = \left(c_{2},\,c_{1},\,c_{3}c_{4},\,a_{1}c_{4},\,a_{2}c_{3},\,a_{1}a_{2},\,\lambda(a_{1}b_{2}c_{3}+a_{2}b_{3}c_{4})-1\right), \\
    I_3 & = \left(b_{2}b_{3},\,a_{1}b_{3},\,a_{2}b_{2},\lambda(a_{1}b_{1}c_{1}+a_{2}b_{1}c_{2}+a_{1}b_{2}c_{3}+a_{2}b_{3}c_{4})-1\right)
\end{align*}
respectively.

In the ring $\bbC[\lambda,\bfa,\bfb,\bfc]/(b_{2}b_{3},\,a_{1}b_{3},\,a_{2}b_{2},\lambda(a_{1}b_{1}c_{1}+a_{2}b_{1}c_{2}+a_{1}b_{2}c_{3}+a_{2}b_{3}c_{4})-1)$, we get \[I_2 = \left(c_{2},\,c_{1},\,c_{3}c_{4},\,a_{1}c_{4},\,a_{2}c_{3},\,a_{1}a_{2}\right).\] By \Cref{lemma:rank4_234}, under these conditions $T_{22}-\lambda P \in \Sub_{223}$ and $P \in \calD_{T_{22}}$.

We check under which conditions $T_{22}-\lambda P \in \calX_{233}^\vee$. In the ring $\bbC[\lambda,\bfa,\bfb,\bfc]/(b_{2}b_{3},\,a_{1}b_{3},\,a_{2}b_{2},\lambda(a_{1}b_{1}c_{1}+a_{2}b_{1}c_{2}+a_{1}b_{2}c_{3}+a_{2}b_{3}c_{4})-1)$, the radical ideal of maximal minors of the pencil $T_{22}-\lambda P$ is equal to $(c_2u^2v-c_1uv^2)$ whose discriminant of the generator is $c_1^2c_2^2$. Hence, if $c_1c_2 \neq 0$, then $T_{22}-\lambda P \not\in \calX_{233}^\vee$ and $P \in \calD_{T_{22}}$. We check if $T_{22}-\lambda P \in \calO_{14}$ by employing \Cref{lemma:rank4_characterization}. The radical ideal of $3 \times 3$ minors of the pencil $T_{22}-\lambda P$ inside the ring $\bbC[\lambda,\bfa,\bfb,\bfc]/(b_{2}b_{3},\,a_{1}b_{3},\,a_{2}b_{2},c_1c_2,\lambda(a_{1}b_{1}c_{1}+a_{2}b_{1}c_{2}+a_{1}b_{2}c_{3}+a_{2}b_{3}c_{4})-1)$ is equal to the ideal $(c_2uv,c_1uv,a_1a_2uv) = (u)\cap(v)\cap(c_2,c_1,a_1)\cap(c_2,c_1,a_2)$.  
In particular, we observe that the pencil $T_{22}-\lambda P$ it is either fully contained in the secant variety of $X_{33}$ or it intersects it in only one point: by \Cref{lemma:rank4_characterization}, that means that $T_{22}-\lambda P$ never belongs to $\calO_{14}$.

{Therefore, we deduce the following.
 \begin{corollary}  
     The forbidden locus of the tensor $T_{22} \in \bbC^2 \otimes \bbC^3 \otimes \bbC^4$ defined in \Cref{table:kronecker} is
     \begin{align*}
        \calD_{T_{22}} =~  & \Big[[Z(b_{2}b_{3},\,a_{1}b_{3},a_2b_2) \smallsetminus V(T^*_{22})%Z(a_{1}b_{1}c_{1}+a_{2}b_{1}c_{2}+a_{1}b_{2}c_{3}+a_{2}b_{3}c_{4})
        ] \smallsetminus Z(c_1c_2) \Big] \\
        & \cup \Big[[Z(b_{2}b_{3},\,a_{1}b_{3},a_2b_2) \smallsetminus
        V(T^*_{22})
    %Z(a_{1}b_{1}c_{1}+a_{2}b_{1}c_{2}+a_{1}b_{2}c_{3}+a_{2}b_{3}c_{4})
        ] \cap Z(c_{2},\,c_{1},\,c_{3}c_{4},\,a_{1}c_{4},\,a_{2}c_{3},\,a_{1}a_{2})\Big].
    \end{align*}
 \end{corollary}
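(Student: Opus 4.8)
The plan is to run \Cref{algo:234} symbolically on $T=T_{22}$ with $P=\bfa\otimes\bfb\otimes\bfc$, read off the constraints on $\bfa,\bfb,\bfc$ at each step, and assemble the output via \Cref{lemma:rank4_234}. Since $T_{22}$ has rank $4$ and concise tensors in $\bbC^2\otimes\bbC^3\otimes\bbC^4$ have rank at least $4$, we have $P\in\calD_{T_{22}}$ exactly when $T_{22}-\lambda P$ becomes non-concise and of rank $3$ for some $\lambda$; thus it suffices to locate the $P$ for which, for some $\lambda$, either $T_{22}-\lambda P\in\Sub_{134}$, or $T_{22}-\lambda P\in\Sub_{223}$, or $T_{22}-\lambda P\in(\Sub_{233}\smallsetminus\calX_{233}^\vee)\cup(\Sub_{233}\cap\calO_{14})$.

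First I would compute the ideal of maximal minors of the first flattening of $T_{22}-\lambda P$ and check that it is the unit ideal, so the $\Sub_{134}$ alternative never occurs. Next I compute the radical ideals $I_2,I_3$ of maximal minors of the second and third flattenings in $\bbC[\lambda,\bfa,\bfb,\bfc]$. The $\lambda$-linear generator of $I_3$ is $\lambda\langle T_{22}^*,P\rangle-1$, and the remaining generators are $b_2b_3$, $a_1b_3$, $a_2b_2$; this forces $\calD_{T_{22}}\subseteq Z(b_2b_3,a_1b_3,a_2b_2)\smallsetminus V(T_{22}^*)$ and pins down $\lambda=\lambda_0:=\langle T_{22}^*,P\rangle^{-1}$ as the only value for which the third flattening of $T_{22}-\lambda P$ drops rank, i.e.\ for which $T_{22}-\lambda P\in\Sub_{233}$. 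Working in the quotient ring $\bbC[\lambda,\bfa,\bfb,\bfc]/(b_2b_3,a_1b_3,a_2b_2,\lambda\langle T_{22}^*,P\rangle-1)$, the ideal $I_2$ simplifies to $(c_2,c_1,c_3c_4,a_1c_4,a_2c_3,a_1a_2)$; by \Cref{lemma:rank4_234}, the $P$ lying additionally in $Z(c_2,c_1,c_3c_4,a_1c_4,a_2c_3,a_1a_2)$ are exactly those with $T_{22}-\lambda_0 P\in\Sub_{223}$, hence in $\calD_{T_{22}}$, and they give the second term of the claimed union.

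For the remaining $P$ in $Z(b_2b_3,a_1b_3,a_2b_2)\smallsetminus V(T_{22}^*)$, where $T_{22}-\lambda_0 P\in\Sub_{233}$ but not in $\Sub_{223}$, I would apply \Cref{lemma:rank4_characterization}: after a change of coordinates realizing $T_{22}-\lambda_0 P$ as a pencil of $3\times 3$ matrices, its determinant is, up to units, $c_2u^2v-c_1uv^2$, with discriminant $c_1^2c_2^2$. Hence when $c_1c_2\neq 0$ the tensor lies outside $\calX_{233}^\vee$ and has rank $3$, yielding the first term of the union. When $c_1c_2=0$ one lands on $\calX_{233}^\vee$; inspecting the ideals of $2\times 2$ and $3\times 3$ minors of the pencil (the $3\times 3$ minors generate $(c_2uv,c_1uv,a_1a_2uv)$), the corresponding line either lies in $\sigma_2(X_{33})$ or meets it in a single point, so by \Cref{lemma:rank4_characterization} it is never in $\calO_{14}$; thus $T_{22}-\lambda_0 P$ has rank $4$ and contributes nothing new. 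Combining the two cases gives the stated formula.

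The main obstacle is the change of coordinates in the last step: $T_{22}-\lambda_0 P$ does not sit in $\bbC^2\otimes\bbC^3\otimes\bbC^3$ in the standard basis, so to evaluate the hyperdeterminant and the $2\times2$ and $3\times3$ minor ideals one must choose, depending on which of $a_1,a_2,b_2,b_3,c_1,\dots,c_4$ vanish, an explicit identification of the concise space; handling these case splits, together with the symbolic computation of the hyperdeterminant of a $2\times3\times3$ tensor and the required radical and saturation computations in \texttt{Macaulay2}, is the technically demanding part, while the rest reduces to routine linear algebra.
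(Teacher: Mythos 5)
Your proposal follows the paper's own strategy step by step: ruling out $\Sub_{134}$ via the first flattening, pinning down $\lambda_0=\langle T_{22}^*,P\rangle^{-1}$ and the locus $Z(b_2b_3,a_1b_3,a_2b_2)\smallsetminus V(T_{22}^*)$ from the third flattening, identifying the $\Sub_{223}$ stratum from $I_2$, and then discussing the hyperdeterminant and the $\calO_{14}$ check via \Cref{lemma:rank4_characterization}. Steps 1 and 2 are fine. The problem is the last inference in Step~3, and it is a gap shared with the paper's own proof. You (and the paper) conclude from the primary decomposition $(c_2uv,c_1uv,a_1a_2uv)=(u)\cap(v)\cap(c_2,c_1,a_1)\cap(c_2,c_1,a_2)$ that when $c_1c_2=0$ the pencil ``either lies in $\sigma_2(X_{33})$ or meets it in a single point,'' so $T_{22}-\lambda_0P$ is never in $\calO_{14}$. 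But both $(u)$ and $(v)$ are minimal primes of that ideal, so for a point $P$ with exactly one of $c_1,c_2$ vanishing the ideal specializes to $(uv)$: the intersection with $\sigma_2(X_{33})$ is then supported at \emph{two} distinct points, and by \Cref{lemma:rank4_characterization} one must still inspect the $2\times2$ minors of the pencil to decide between $\calO_{14}$ (rank~$3$) and $\calO_{17}$ (rank~$4$) --- a computation that neither you nor the paper performs.

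This omitted case does contribute to $\calD_{T_{22}}$. Take $\bfa=(1,0)$, $\bfb=(1,1,0)$, $\bfc=(1,0,1,0)$, so $P=\bfe_1^1\otimes(\bfe_1^2+\bfe_2^2)\otimes(\bfe_1^3+\bfe_3^3)$. Then $b_2b_3=a_1b_3=a_2b_2=0$ and $\langle T_{22}^*,P\rangle=2\neq0$, yet $c_1c_2=0$ with $c_1=1\neq0$, so $P$ lies in neither term of the claimed formula. With $\lambda_0=\tfrac12$ one computes
\[
T_{22}-\tfrac12 P=\tfrac12\,\bfe_1^1\otimes(\bfe_1^2-\bfe_2^2)\otimes(\bfe_1^3-\bfe_3^3)+\bfe_2^1\otimes\bfe_1^2\otimes\bfe_2^3+\bfe_2^1\otimes\bfe_3^2\otimes\bfe_4^3,
\]
a sum of three decomposable tensors, concise in $\bbC^2\otimes\bbC^3\otimes\langle\bfe_1^3-\bfe_3^3,\bfe_2^3,\bfe_4^3\rangle$. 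The associated $3\times3$ pencil has determinant $\tfrac12\,uv^2$ with a rank-one matrix at $(u:v)=(1:0)$, so by \Cref{lemma:rank4_characterization} this tensor lies in $\calO_{14}$ and has rank $3=\rk T_{22}-1$; hence $P\in\calD_{T_{22}}$, contradicting the displayed formula. To close the gap one must compute the $2\times2$ minor ideal of the pencil over the constrained ring, extract the exact condition under which the pencil meets $X_{33}$ at one of its two rank-drop points, and include the corresponding stratum inside $Z(c_1c_2)$ in the description of $\calD_{T_{22}}$.
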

 }

\subsubsection{Orbit n.\protect\idref{23}}\label{ssec:n23}
Let $T_{23} = \bfe_1^1\otimes (\bfe_1^2\otimes \bfe_1^3+\bfe_2^2\otimes \bfe_2^3+\bfe_3^2\otimes \bfe_3^3)+\bfe_2^1\otimes(\bfe_1^2\otimes \bfe_2^3+\bfe_2^2\otimes \bfe_3^3+\bfe_3^2 \otimes \bfe_4^3)$  
and $P = \bfa \otimes \bfb \otimes \bfc$. 

\textit{Step 1.} The radical ideal of maximal minors of the first flattening of $T_{23}-\lambda P$ is $I_1 = (1)$.

\textit{Step 2.} The radical ideals of maximal minors of second and third flattenings of $T_{22}-\lambda P$ are 
\begin{align*}
    I_2 & = (1), \\
    I_3 & = \left(b_{2}^{2}-b_{1}b_{3},\,a_{2}b_{2}-a_{1}b_{3},\,a_{2}b_{1}-a_{1}b_{2},\lambda(a_{1}b_{1}c_{1}+a_{1}b_{2}c_{2}+a_{1}b_{3}c_{3}+a_{2}b_{3}c_{4})-1\right)
\end{align*}
respectively.

We check under which conditions $T_{23}-\lambda P \in \calX_{233}^\vee$. In the ring $\bbC[\lambda,\bfa,\bfb,\bfc]/(b_{2}^{2}-b_{1}b_{3},\,a_{2}b_{2}-a_{1}b_{3},\,a_{2}b_{1}-a_{1}b_{2},\lambda(a_{1}b_{1}c_{1}+a_{1}b_{2}c_{2}+a_{1}b_{3}c_{3}+a_{2}b_{3}c_{4})-1)$, the ideal of maximal minors of the pencil $T_{23}-\lambda P$ is equal to $(c_{4}u^{3}-c_{3}u^{2}v+c_{2}uv^{2}-c_{1}v^{3})$   
whose discriminant of the generator is equal to $\Delta := -c_{2}^{2}c_{3}^{2}+4\,c_{1}c_{3}^{3}+4\,c_{2}^{3}c_{4}-18\,c_{1}c_{2}c_{3}c_{4}+27\,c_{1}^{2}c_{4}^{2}$. If $\Delta \neq 0$, then $T_{23}-\lambda P \not\in \calX_{233}^\vee$ and $P \in \calD_{T_{23}}$. We need to check if $T_{23}-\lambda P \in \calO_{14}$: the saturation of the ideal of $2 \times 2$ minors is equal to $(1)$, hence the pencil $T_{23}-\lambda P$ does not contain any rank-one matrix and, by \Cref{lemma:rank4_characterization}, it never belongs to $\calO_{14}$.
{Therefore, we deduce the following.
 \begin{corollary}  
     The forbidden locus of the tensor $T_{23} \in \bbC^2 \otimes \bbC^3 \otimes \bbC^4$ defined in \Cref{table:kronecker} is
     \begin{align*}
        \calD_{T_{23}} =~  & \Big[[Z(b_{2}^{2}-b_{1}b_{3},\,a_{2}b_{2}-a_{1}b_{3},\,a_{2}b_{1}-a_{1}b_{2}) \smallsetminus Z(a_{1}b_{1}c_{1}+a_{1}b_{2}c_{2}+a_{1}b_{3}c_{3}+a_{2}b_{3}c_{4})] \smallsetminus Z(\Delta) \Big],
     \end{align*}
    where $\Delta = -c_{2}^{2}c_{3}^{2}+4\,c_{1}c_{3}^{3}+4\,c_{2}^{3}c_{4}-18\,c_{1}c_{2}c_{3}c_{4}+27\,c_{1}^{2}c_{4}^{2}$.
 \end{corollary}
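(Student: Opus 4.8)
The plan is to apply \Cref{lemma:rank4_234} to $T_{23}$, which belongs to the family \eqref{eq:orbits_234}, and to make each of its three conditions explicit with $P=\bfa\otimes\bfb\otimes\bfc$, following the steps of \Cref{algo:234}. First I would compute the ideals of maximal minors of $\flatt_1(T_{23}-\lambda P)$ and $\flatt_2(T_{23}-\lambda P)$; both come out to be $(1)$, so the first and second flattenings have full rank for every $\lambda$. Hence $T_{23}-\lambda P\notin\Sub_{134}$ and $T_{23}-\lambda P\notin\Sub_{223}$ for all $\lambda$, so conditions (i) and (ii)(a) of \Cref{lemma:rank4_234} never hold. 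For the third flattening I expect
\[
  I_3=\big(b_2^2-b_1b_3,\ a_2b_2-a_1b_3,\ a_2b_1-a_1b_2,\ \lambda g-1\big),\qquad g:=a_1b_1c_1+a_1b_2c_2+a_1b_3c_3+a_2b_3c_4 ,
\]
so the $4\times4$ minors of $\flatt_3(T_{23}-\lambda P)$ vanish simultaneously for some $\lambda$ exactly when $P$ lies on the quadric variety $Z(b_2^2-b_1b_3,\ a_2b_2-a_1b_3,\ a_2b_1-a_1b_2)$ and $g\neq0$; in that case the value is the unique $\lambda_0=g^{-1}$, with $T_{23}-\lambda_0P\in\Sub_{233}$, while for every other $\lambda$ the tensor is concise and so of rank $\geq4$. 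Therefore condition (ii)(b) of \Cref{lemma:rank4_234} can only be met on the locus $Z(b_2^2-b_1b_3,\ a_2b_2-a_1b_3,\ a_2b_1-a_1b_2)\smallsetminus Z(g)$, and the problem reduces to deciding there whether $T_{23}-\lambda_0P$ lies in $\Sub_{233}\cap\calX^\vee_{233}\smallsetminus\calO_{14}$ or not.

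On that locus $T_{23}-\lambda_0P$ genuinely lies in a copy of $\bbC^2\otimes\bbC^3\otimes\bbC^3$ inside $\bbC^2\otimes\bbC^3\otimes\bbC^4$ (the last factor cut down to a $3$-dimensional subspace), so I would present it as a pencil $uA_0+vA_1$ of $3\times3$ matrices. Concretely, working in the ring $R:=\bbC[\lambda,\bfa,\bfb,\bfc]/I_3$, I would compute the ideal of maximal ($3\times3$) minors of the pencil of $3\times4$ matrices attached to $T_{23}-\lambda P$; I expect this ideal to be principal, generated by the binary cubic $c_4u^3-c_3u^2v+c_2uv^2-c_1v^3$, which is the determinant of the effective $3\times3$ pencil. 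Its discriminant $\Delta:=-c_2^2c_3^2+4c_1c_3^3+4c_2^3c_4-18c_1c_2c_3c_4+27c_1^2c_4^2$ is then the restriction of the hyperdeterminant $H_{233}$ to this family, so $T_{23}-\lambda_0P\in\calX^\vee_{233}$ if and only if $\Delta=0$. When $\Delta\neq0$, $T_{23}-\lambda_0P$ lies in $\Sub_{233}\smallsetminus\calX^\vee_{233}$, so condition (ii)(b) holds and $P\in\calD_{T_{23}}$.

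When $\Delta=0$, $T_{23}-\lambda_0P\in\calX^\vee_{233}$, so by \Cref{lemma:rank4_234} we have $P\in\calD_{T_{23}}$ if and only if $T_{23}-\lambda_0P\in\calO_{14}$, which by items (4)--(5) of \Cref{lemma:rank4_characterization} requires the pencil of $3\times3$ matrices attached to $T_{23}-\lambda_0P$ to contain a rank-one matrix. To exclude this I would compute over $R$ the ideal of $2\times2$ minors of that pencil and saturate with respect to the irrelevant ideal $(u,v)$; the outcome is the unit ideal, so the pencil meets $X_{33}$ nowhere. Hence $T_{23}-\lambda_0P\notin\calO_{14}$ and $P\notin\calD_{T_{23}}$. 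Assembling the three cases gives
\[
  \calD_{T_{23}}=\big[\,Z(b_2^2-b_1b_3,\ a_2b_2-a_1b_3,\ a_2b_1-a_1b_2)\smallsetminus Z(g)\,\big]\smallsetminus Z(\Delta),
\]
which is the claimed formula. The step requiring the most care is the computation of the $3\times3$ and $2\times2$ minor ideals over $R$: in the general scheme of \Cref{ssec:rank4_233} the identification of $T-\lambda P$ with a pencil of $3\times3$ matrices is not canonical and may have to be adapted to which coordinates of $\bfa,\bfb,\bfc$ vanish, so one should check that the resulting hyperdeterminant and saturated minor ideal are independent of that choice; for $T_{23}$ these computations turn out to be uniform and are carried out with Macaulay2.
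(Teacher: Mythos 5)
Your proposal is correct and follows essentially the same route as the paper's own argument for orbit n.\idref{23}: compute the flattening minor ideals (first and second trivial, the third cutting out the determinantal quadric together with $\lambda g - 1$), then restrict to the quotient ring and extract the binary cubic $c_4u^3-c_3u^2v+c_2uv^2-c_1v^3$ as the maximal-minor ideal of the pencil, whose discriminant is $\Delta$, and finally observe that the saturated $2\times 2$ minor ideal is the unit ideal so the pencil never touches $X_{33}$, excluding $\calO_{14}$ by \Cref{lemma:rank4_characterization}. The only cosmetic mismatch is that the paper's corollary is mislabeled ``forbidden locus'' while actually giving $\calD_{T_{23}}$; you correctly compute the decomposition locus.
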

 }

 \subsection{Orbit n.\protect\idref{21}: rank-$5$ tensors in $\bbC^2\otimes\bbC^3\otimes\bbC^4$}\label{ssec:n21} There is one orbit of rank-$5$ tensors in $\bbC^2 \otimes \bbC^3 \otimes \bbC^4$:
 \[
    T_{21} = \bfe^1_1\otimes ( \bfe^2_1\otimes \bfe^3_1 +\bfe^2_2\otimes \bfe^3_3+ \bfe^2_3\otimes \bfe^3_4)+ \bfe^1_2\otimes ( \bfe^2_1\otimes \bfe^3_2 + \bfe^2_2\otimes \bfe^3_4).
 \]
 The orbits n.\idref{19}, n.\idref{20} and n.\idref{21} are the concise components of the singular locus of $\calX_{234}^\vee$ which is the Zariski closure of the orbit n.\idref{22}, see \cite[Table 4]{BL14}. The orbit n.\idref{23} consists of concise tensors which do not lie in $\calX_{234}^\vee$. Here, we compute the forbidden locus of $T_{21}$ by checking under which conditions $T-\lambda P \in \calO_{21}$ for all $\lambda$'s. In order to do that, we need to distinguish these orbits.
 \begin{lemma}\label{lemma:234_rank5}
     Let $T \in \calO_{19} \cup \calO_{20} \cup \calO_{21} \cup \calO_{22} \cup \calO_{23}$ and let $\ell_T \subset \bbP (\bbC^3 \otimes \bbC^4)$ be the corresponding pencil of matrices. Then:
     \begin{enumerate}
         \item if $T \in \calO_{19}$ then $\ell_T \cap \sigma_2(X_{34})$ is a single smooth point;
         \item if $T \in \calO_{20}$ then $\ell_T \cap \sigma_2(X_{34})$ is a $2$-jet supported at a rank-one point;
         \item if $T \in \calO_{21}$ then $\ell_T \cap \sigma_2(X_{34})$ is a $2$-jet supported at a rank-two point;
         \item if $T \in \calO_{22}$ then $\ell_T \cap \sigma_2(X_{34})$ is supported at two distinct points;
         \item if $T \in \calO_{23}$ then $\ell_T \cap \sigma_2(X_{34}) = \emptyset$.
     \end{enumerate}
 \end{lemma}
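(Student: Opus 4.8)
The plan is to mimic the reduction to normal forms used in the proof of \Cref{lemma:rank4_characterization}. Since the Segre variety $X_{34}$ and its secant variety $\sigma_2(X_{34})$ are invariant under the action of $\GL_3\times\GL_4$, while the $\GL_2$-factor only reparametrizes the pencil $\ell_T\subset\bbP(\bbC^3\otimes\bbC^4)$, it suffices to prove the five assertions for the normal forms $T_{19},\dots,T_{23}$ of \Cref{table:kronecker}; indeed, both invariants of $\ell_T\cap\sigma_2(X_{34})$ that appear in the statement, namely its scheme structure and the rank of a matrix at each of its support points, are preserved under these group actions. For each normal form I would read off the associated pencil $uA_0+vA_1$ of $3\times 4$ matrices directly from \Cref{table:kronecker}, and observe that $\ell_T\cap\sigma_2(X_{34})$ is exactly the subscheme of $\bbP^1=\{(u:v)\}$ cut out by the ideal $I_3(uA_0+vA_1)\subset\bbC[u,v]$ generated by the four $3\times 3$ minors, since $\sigma_2(X_{34})$ is the determinantal variety of $3\times4$ matrices of rank at most two, whose ideal is generated by those minors, and whose singular locus is precisely $X_{34}$.

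The remainder is a short direct computation, carried out orbit by orbit. For $T_{19}$ the pencil is $\left(\begin{smallmatrix} u & v & 0 & 0 \\ 0 & u & v & 0 \\ 0 & 0 & 0 & u\end{smallmatrix}\right)$, whose ideal of $3\times3$ minors is $(u^3,u^2v,uv^2)=u\,(u,v)^2$; as a subscheme of $\bbP^1$ this is the reduced point $(0:1)$, which lies in the smooth locus $\sigma_2(X_{34})\smallsetminus X_{34}$ because the matrix there has rank two. For $T_{20}$ and $T_{21}$ the pencils are $\left(\begin{smallmatrix} u & v & 0 & 0 \\ 0 & 0 & u & 0 \\ 0 & 0 & 0 & u\end{smallmatrix}\right)$ and $\left(\begin{smallmatrix} u & v & 0 & 0 \\ 0 & 0 & u & v \\ 0 & 0 & 0 & u\end{smallmatrix}\right)$, respectively, and in both cases the ideal of $3\times 3$ minors is $(u^3,u^2v)=u^2(u,v)$, which localizes at $(0:1)$ to $(u^2)$, a curvilinear length-two scheme, i.e.\ a $2$-jet supported at $(0:1)$; the matrix at that point has rank one for $T_{20}$ and rank two for $T_{21}$. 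For $T_{22}$ the pencil is $\left(\begin{smallmatrix} u & v & 0 & 0 \\ 0 & 0 & u & 0 \\ 0 & 0 & 0 & v\end{smallmatrix}\right)$, with ideal of $3\times3$ minors $(u^2v,uv^2)=uv\,(u,v)$, so the intersection consists of the two distinct reduced points $(0:1)$ and $(1:0)$. Finally, for $T_{23}$ the pencil is $\left(\begin{smallmatrix} u & v & 0 & 0 \\ 0 & u & v & 0 \\ 0 & 0 & u & v\end{smallmatrix}\right)$, with ideal of $3\times3$ minors $(u^3,u^2v,uv^2,v^3)=(u,v)^3$, the irrelevant ideal, so $\ell_{T_{23}}\cap\sigma_2(X_{34})=\emptyset$.

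There is no serious obstacle: the minors are immediate to compute, and the only delicate bookkeeping is the scheme-theoretic distinction between a reduced point (orbit n.\idref{19}) and a $2$-jet (orbits n.\idref{20} and n.\idref{21}), which I would settle by passing to the local ring at $(0:1)$ and comparing $\bbC[u]/(u)$ with $\bbC[u]/(u^2)$, together with the rank computation at the support point, which decides whether the support lies in $\sigma_2(X_{34})\smallsetminus X_{34}$ or in $X_{34}=\mathrm{Sing}(\sigma_2(X_{34}))$. Since the length of the local ring and the rank at a point are both $\GL_3\times\GL_4$-invariant, the reduction to normal forms made at the start is legitimate, and the five cases together establish the lemma.
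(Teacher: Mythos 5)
Your proof is correct and follows essentially the same approach as the paper: reduce to normal forms, read off the pencil of $3\times 4$ matrices, and compute the ideal of $3\times 3$ minors as the defining ideal of $\ell_T \cap \sigma_2(X_{34}) \subset \bbP^1$. The paper records the result directly as saturations, getting $(u)$, $(u^2)$, $(u^2)$, $(uv)$, $(1)$ for $T_{19}$ through $T_{23}$, and then distinguishes $T_{20}$ from $T_{21}$ by computing the saturation of the ideal of $2\times 2$ minors (respectively $(u)$ and $(1)$), whereas you evaluate the pencil at $u=0$ to read off the rank directly; these are equivalent checks.
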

 \begin{proof}
    Since the rank of a matrix is invariant under the action of $\GL_2 \times \GL_3\times \GL_4$, we can work with the orbit representatives.
    
    The pencils associated to $T_{19},\dots,T_{22}$ are
\begin{align*}
\ell_{T_{19}}=\begin{pmatrix} 
u & v & 0 & 0 \\
0 & u & v & 0\\
0 & 0 & 0 & u
\end{pmatrix}&; \quad 
\ell_{T_{20}}=\begin{pmatrix} 
u & v & 0 & 0 \\
0 & 0 & u & 0\\
0 & 0 & 0 & u
\end{pmatrix}; \quad 
\ell_{T_{21}}=\begin{pmatrix} 
u & v & 0 & 0 \\
0 & 0 & u & v \\
0 & 0 & 0 & u
\end{pmatrix}; \\ 
\ell_{T_{22}}=&\begin{pmatrix} 
u & v & 0 & 0 \\
0 & 0 & u & 0\\
0 & 0 & 0 & v
\end{pmatrix}; \quad 
\ell_{T_{23}}=\begin{pmatrix} 
u & v & 0 & 0 \\
0 & u & v & 0\\
0 & 0 & u & v
\end{pmatrix}
\end{align*}
respectively. The saturation of the ideals of $3$-minors are:
\begin{align*}
    {\rm minors}_3(T_{19})^{\rm sat} = (u), \quad {\rm minors}_3(T_{20})^{\rm sat} = {\rm minors}_3(T_{21})^{\rm sat} = (u^2), \\ {\rm minors}_3(T_{22})^{\rm sat} = (uv), \quad {\rm minors}_3(T_{23})^{\rm sat} = (1).
\end{align*}
In order to distinguish the cases n.\idref{20} and n.\idref{21}, note that the saturation of the ideals of $2$-minors are:
\[
    {\rm minors}_2(T_{20})^{\rm sat} = (u), \quad {\rm minors}_2(T_{21})^{\rm sat} = (1). \qedhere
\]
\end{proof}

\begin{algorithm}[htb]
\caption{Test if a rank-$1$ tensor lies in the decomposition locus of a rank-$5$ tensor in $\bbC^2 \otimes \bbC^3 \otimes \bbC^4$}
\flushleft{
\textbf{Input:} }
\begin{itemize}
    \item Rank-$5$ tensor $T \in \bbC^2 \otimes \bbC^3 \otimes \bbC^4$.
    \item Rank-one tensor $P \in \bbC^2 \otimes \bbC^3 \otimes \bbC^4$.
\end{itemize}
\flushleft{
\textbf{Output:} Whether $P \in \calF_T$ or $P \not\in \calF_T$.
}

\flushleft{
\textbf{Procedure:}
}
\begin{enumerate}
    \item[{\it Step 1.}] Compute the principal ideals of maximal minors of all flattenings $I_1 = (f_1), I_2 = (f_2), I_3 = (f_3)$ in $\bbC[\lambda]$. If $\deg(f_i) > 0$ for any $i$, then $T-\lambda P$ is not concise for any root of $f_i$; \\
    \textbf{return} $P \not\in \calF_{T_{21}}$.

    \item[\textit{Step 2.}] Compute the ideal maximal minors of the pencil $T-\lambda P$ in $\bbC[\lambda][u,v].$ 
    \begin{itemize}
        \item If it is equal to $(\ell(u,v)^2)$ for some linear form $\ell$ then it means that for any $\lambda$ the intersection of the pencil $T_{21}-\lambda P$ with $\sigma(X_{34})$ is a $2$-jet, \\
        \textbf{return} $P \in \calF_{T_{21}}$;
        \item else,  \textbf{return} $P \not\in \calF_{T_{21}}$.
    \end{itemize}
\end{enumerate}
\label{algo:234_rank5}
\end{algorithm}

Now, we are ready to compute symbolically the forbidden locus of $T_{21}$ and we will follow the procedure described in \Cref{algo:234_rank5}.
\begin{proposition} The forbidden locus of $T_{21}$ is
    \begin{align*}
        \calF_{T_{21}} = & \Big[(Z(b_3) \cap Z(\alpha')) \smallsetminus Z(c_1)\Big] \cap \Big[Z(b_1,b_2) \cup Z(b_2,a_1c_1+a_2c_2) \cup Z(a_2,b_1c_1+b_2c_3)\Big] \\ 
            & \cup \Big[(Z(b_3) \cap Z(\alpha') \cap Z(c_1)\Big] \cap \Big[Z(b_2,a_2b_1c_2) \cup Z(a_2,a_2b_2c_3) \cup [Z(c_3) \smallsetminus (Z(b_2) \smallsetminus Z(a_2b_1c_2))]\Big] \\
            & \cup [Z(a_2) \smallsetminus Z(b_3)] \cap Z(c_1,c_3),
    \end{align*}
    where 
    $
        \alpha' = a_{1}b_{1}c_{1}^{2}+a_{2}b_{1}c_{1}c_{2}+a_{1}b_{2}c_{1}c_{3}+a_{2}b_{2}c_{2}c_{3}$.
\end{proposition}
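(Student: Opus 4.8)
\textit{Reduction.} The maximal rank in $\bbC^2\otimes\bbC^3\otimes\bbC^4$ is $5$ and $\rk(T_{21})=5$, so $P\in\calF_{T_{21}}$ if and only if $\rk(T_{21}-\lambda P)=5$ for every $\lambda\in\bbC$. By the classification of orbits recalled in \Cref{table:kronecker} (see also \cite[Tables~1, 3, 4]{BL} and \cite[Table~4]{BL14}), the only rank-$5$ elements of $\bbC^2\otimes\bbC^3\otimes\bbC^4$ are those in $\calO_{21}$: indeed all other orbits, concise or not, have rank at most $4$. Hence the task is to describe the rank-one tensors $P=\bfa\otimes\bfb\otimes\bfc$ such that $T_{21}-\lambda P\in\calO_{21}$ for all $\lambda$ (which holds automatically at $\lambda=0$), and this is exactly what \Cref{algo:234_rank5} does; I would run it symbolically over $\bbC[\bfa,\bfb,\bfc][\lambda]$.

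\textit{Step 1: conciseness.} I would first compute the principal ideals $I_1,I_2,I_3\subset\bbC[\bfa,\bfb,\bfc][\lambda]$ of maximal minors of the three flattenings of $T_{21}-\lambda P$. As in the orbits handled in \Cref{ssec:n19,ssec:n20,ssec:n22,ssec:n23}, the ideal $I_3$ has the shape $(g_1,\dots,g_m,\lambda\langle T_{21}^*,P\rangle-1)$ with the $g_j$ quadratic in $\bfa,\bfb$. For $P\in\calF_{T_{21}}$ each $I_i$ must specialize to the unit ideal of $\bbC[\lambda]$ (otherwise $T_{21}-\lambda P$ is non-concise for some $\lambda$ and has rank $\le 4$, so $P\in\calD_{T_{21}}$); concretely either some $g_j$ does not vanish at $P$, or all $g_j$ vanish and then one also needs $\langle T_{21}^*,P\rangle=0$. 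This carves out a locally closed candidate set and explains the $V(T_{21}^*)$-type alternatives in the statement. All subsequent computations are performed modulo the ideal encoding these conditions, which keeps them finite.

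\textit{Step 2: staying in $\calO_{21}$.} Next I would form the pencil $\ell_{T_{21}-\lambda P}$ of $3\times4$ matrices and compute its (saturated) ideal of $3\times3$ minors in $\bbC[\lambda][u,v]$. By \Cref{lemma:234_rank5}, among concise tensors of $\bbC^2\otimes\bbC^3\otimes\bbC^4$ the orbit $\calO_{21}$ is characterized by this ideal being $\big(\ell(u,v)^2\big)$ for a linear form $\ell$ --- equivalently $\ell_T\cap\sigma_2(X_{34})$ is a $2$-jet supported at a rank-two point --- as opposed to $\calO_{19}$, $\calO_{22}$, $\calO_{23}$, and, crucially, the rank-$4$ orbit $\calO_{20}$, which has the same $3$-minor ideal but is separated by its $2\times2$-minor ideal saturating to a principal ideal. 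Imposing that the $3$-minor ideal be a perfect square \emph{identically in $\lambda$} reduces, after factoring out the constraints of \textit{Step~1}, to $b_3=0$ together with $\alpha'=(a_1c_1+a_2c_2)(b_1c_1+b_2c_3)=0$; the complementary branch $b_3\ne0$ only survives under $a_2=0$ and $c_1=c_3=0$, giving the third block of the statement. On the main branch one splits according to $c_1\ne0$ or $c_1=0$ --- the first and second blocks --- and in each case the leftover requirements, namely that $T_{21}-\lambda P$ avoid $\calO_{20}$ (and the other rank-$4$ orbits) for $\lambda\ne0$, tested via the $2\times2$ minors of the pencil as in \Cref{lemma:234_rank5}, translate into the union conditions $Z(b_1,b_2)\cup Z(b_2,a_1c_1+a_2c_2)\cup Z(a_2,b_1c_1+b_2c_3)$ and its $c_1=0$ analogue.

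\textit{Main obstacle.} The delicate point is \textit{Step~2}: not the minor computations themselves but imposing ``$(\ell^2)$ for all $\lambda$'' correctly, since $\ell$ depends on both $\lambda$ and $(\bfa,\bfb,\bfc)$ and degenerates on the boundary of the candidate stratum (to $u$, to $v$, or to a combination $c_1u\pm c_3v$), which forces the case split, and since $\calO_{21}$ must be separated from the rank-$4$ orbit $\calO_{20}$ with the help of the $2\times2$ minors. Once the stratification is fixed, each branch is a finite Gr\"obner-basis verification over $\bbC[\bfa,\bfb,\bfc][\lambda]$ carried out in Macaulay2 \cite{M2}, and assembling the branches yields the stated description of $\calF_{T_{21}}$.
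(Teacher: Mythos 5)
Your proposal follows the paper's proof: both reduce the problem to checking, via \Cref{algo:234_rank5} and \Cref{lemma:234_rank5}, whether the pencil of $3\times4$ matrices associated to $T_{21}-\lambda P$ is concise and meets $\sigma_2(X_{34})$ in a $2$-jet supported at a rank-two point for every $\lambda$, using the flattening minors in \emph{Step 1} and the $3\times 3$ (and, to separate from $\calO_{20}$, $2\times 2$) minors of the pencil in \emph{Step 2}, then stratifying along $b_3$, $a_2$, $\alpha'$, and $c_1$. Your observation that $\alpha'$ factors as $(a_1c_1+a_2c_2)(b_1c_1+b_2c_3)$ is a nice addition that the paper does not make explicit, though note that the final formula does not actually contain $V(T_{21}^*)$-type alternatives as your \emph{Step 1} remark suggests --- the conciseness constraints get absorbed into the stratification differently in this case.
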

\begin{proof}
    Since $\calO_{21}$ is the only orbit of rank-$5$ tensors in $\bbC^2\otimes\bbC^3\otimes\bbC^4$, we have that $p \in \calF_{T_{21}}$ if and only if $T-\lambda p \in \calO_{21}$ for all $\lambda$'s. In order to check this condition we apply \Cref{lemma:234_rank5}.

    We first compute the ideals of maximal minors of all flattenings. In particular, we get
    \begin{align}\label{eq:orbit21_minors_flattenings}
        I = (1), \quad I_2 = \left(c_{3},\,c_{2},\,c_{1},\,a_{2},\lambda a_{1}b_{3}c_{4}-1\right), \nonumber \\ I_3 = \left(b_{3},\,b_{2},\lambda(a_{1}b_{1}c_{1}+a_{2}b_{1}c_{2})-1\right) \cap \left(b_{3},\,a_{2},\lambda(a_{1}b_{1}c_{1}+a_{1}b_{2}c_{3})-1\right).
    \end{align}

    Now, by \Cref{lemma:234_rank5}, we know that $T-\lambda P \in \calO_{21}$ if and only if it is concise and the corresponding pencil of $3\times 4$ matrices intersects the $2$nd secant variety of $X_{34}$ is a $2$-jet supported at a rank-$2$ matrix. Hence, we compute the ideal of maximal minors of the pencil corresponding to $T-\lambda P$ (this is saturated with respect to $(u,v) \cap (\lambda)$). That is the ideal in $\bbC[\lambda,\bfa,\bfb,\bfc][u,v]$ given by the intersection of the following primary components:
\begin{align*}
        \mathfrak{p}_1 & = \left(b_{3}^{2},\,b_{3}u,\,b_{2}u-b_{3}v,\,u^{2}\right), \\
        \mathfrak{p}_2 & = \left(a_{2}^{2},\,a_{2}u,\,a_{1}u+a_{2}v,\,u^{2}\right), \\
        \mathfrak{p}_3 & = \Big(c_{2}u-c_{1}v,~\alpha(\bfa,\bfb,\bfc)\lambda - c_1^2, \\ &  \left(\lambda(a_{1}b_{1}c_{1}^{2}+a_{1}b_{2}c_{1}c_{3}+a_{1}b_{3}c_{1}c_{4})-c_{1}\right)u+\left(\lambda(a_{2}b_{1}c_{1}^{2}+a_{2}b_{2}c_{1}c_{3}-a_{1}b_{3}c_{1}c_{3}-a_{2}b_{3}c_{2}c_{3}+a_{2}b_{3}c_{1}c_{4}\right)v, \\
        & \left(\lambda(a_{1}b_{1}c_{1}+a_{1}b_{2}c_{3}+a_{1}b_{3}c_{4})-1\right)u^{2}+\left(\lambda(a_{2}b_{1}c_{1}+a_{2}b_{2}c_{3}-a_{1}b_{3}c_{3}+a_{2}b_{3}c_{4}\right)uv-\lambda a_{2}b_{3}c_{3}v^{2}\Big),
    \end{align*}
where 
    \[
        \alpha(\bfa,\bfb,\bfc) := a_{1}b_{1}c_{1}^{3}+a_{2}b_{1}c_{1}^{2}c_{2}+a_{1}b_{2}c_{1}^{2}c_{3}+a_{2}b_{2}c_{1}c_{2}c_{3}-a_{1}b_{3}c_{1}c_{2}c_{3}-a_{2}b_{3}c_{2}^{2}c_{3}+a_{1}b_{3}c_{1}^{2}c_{4}+a_{2}b_{3}c_{1}c_{2}c_{4}.
    \]
    If $b_3a_2\cdot\alpha(\bfa,\bfb,\bfc) \neq 0$, then, for a generic $\lambda$, $T_{21}-\lambda P$ does not belong to $\sigma_2(X_{34})$ and, in particular, $P \not\in \calF_{T_{21}}$.
    
    We deduce that $\calF_{T_{21}} \subset Z(b_3) \cup Z(a_2) \cup Z(\alpha(\bfa,\bfb,\bfc)).$ 
    \begin{itemize}
        \item Let $P \in Z(\alpha(\bfa,\bfb,\bfc)) \smallsetminus [Z(b_3) \cup Z(a_2)]$. Computationally, we consider this case by passing to the quotient ring modulo $(\alpha(\bfa,\bfb,\bfc))$ and by saturating the computed ideal also by $(a_2b_3)$. Then, we get only two primary components given by 
        \[
          \left(c_{3}^{2},\,c_{2}c_{3}-c_{1}c_{4},\,c_{1}c_{3},\,c_{1}^{2},\,c_{4}u-c_{3}v,\,c_{3}u,\,c_{2}u-c_{1}v,\,c_{1}u,\,u^{2}\right)
        \]
        and 
  \begin{align*}
        & \Big(c_{2}^{2},\,c_{1}c_{2},\,c_{1}^{2},\,c_{2}u-c_{1}v,\\ & \left(\lambda(a_{1}b_{2}c_{1}c_{3}+a_{1}b_{3}c_{1}c_{4})-c_{1}\right)u+\lambda\left(a_{2}b_{2}c_{1}c_{3}-a_{1}b_{3}c_{1}c_{3}-a_{2}b_{3}c_{2}c_{3}+a_{2}b_{3}c_{1}c_{4}\right)v,
       \\
       & \left(\lambda(a_{1}b_{1}c_{1}+a_{1}b_{2}c_{3}+a_{1}b_{3}c_{4})-1\right)u^{2}+\lambda\left(a_{2}b_{1}c_{1}+a_{2}b_{2}c_{3}-a_{1}b_{3}c_{3}+a_{2}b_{3}c_{4}\right)uv-\lambda a_{2}b_{3}c_{3}v^{2}\Big).
        \end{align*}       
        It is immediate to notice that, if $c_1 \neq 0$, then we have empty intersection. If $c_1 = 0$ but $c_2 \neq 0$, then only the first component is not irrelevant and it is immediate to see that we always get a simple point. If $c_1 = c_2 = 0$, then we are left with the principal ideal generated by 
        \[
         \left(\lambda(a_{1}b_{2}c_{3}+a_{1}b_{3}c_{4})-1\right)u^{2}+\lambda\left(a_{2}b_{2}c_{3}-a_{1}b_{3}c_{3}+a_{2}b_{3}c_{4}\right)uv-\lambda a_{2}b_{3}c_{3}v^{2}.
        \]
        If $c_3 = 0$, then the above expression factors as $u((\lambda a_1b_3c_4-1)u - \lambda a_2b_3c_4v)$ %$x_1((\lambda a_1b_3c_4-1)x_1 - \lambda a_2b_3c_4x_2)$ 
        which gives us two distinct points (since $c_4$ cannot be also equal to zero). If $c_3 \neq 0$, then for a general $\lambda$ such principal generator is not a square and then we get again two distinct points. 
        
        In other words, we have that if $P \in Z(\alpha(\bfa,\bfb,\bfc)) \smallsetminus [Z(b_3) \cup Z(a_2)]$ then $T_{21}-\lambda P \not\in \calO_{21}$ for all $\lambda \neq 0$, i.e., $P \not\in \calF_{T_{21}}$.

        \item Let $P \in Z(b_3)$. Then the ideal of maximal minors of the pencil $T-\lambda P$ becomes 
       \begin{align*}
            & (u)~ \cap ~(b_2,u^2)~ \cap ~(a_2,u^2)  \\
            & \cap \Big(c_2u-c_1v, ~\lambda\alpha'-c_{1}, ~\left(\lambda(a_{1}b_{1}c_{1}+a_{1}b_{2}c_{3})-1\right)u+\lambda \left(a_{2}b_{1}c_{1}+a_{2}b_{2}c_{3}\right)v\Big)
        \end{align*} 
        with $\alpha'(\bfa,\bfb,\bfc) := a_{1}b_{1}c_{1}^{2}+a_{2}b_{1}c_{1}c_{2}+a_{1}b_{2}c_{1}c_{3}+a_{2}b_{2}c_{2}c_{3}$.

        If $\alpha' \neq 0$, then we get either two simple points (if $c_1 \neq 0$) or one simple point (if $c_1 = 0$). 

        If we additionally restrict to $\alpha' = 0$ and $c_1 \neq 0$, then the ideal of maximal minors of $T_{21}-\lambda P$ becomes 
\begin{align*} 
\left(u,\,b_{1}c_{1}+b_{2}c_{3}\right)
  \cap \left(u,\,a_{1}c_{1}+a_{2}c_{2}\right)
 \cap \left(b_{2},\,b_{1},\,u^{2}\right)  \\
 \cap \left(a_{2},\,b_{1}c_{1}+b_{2}c_{3},\,u^{2}\right)
  \cap \left(b_{2},\,a_{1}c_{1}+a_{2}c_{2},\,u^{2}\right) \cap (a_1,a_2,u^2). 
\end{align*}     
This defines a $2$-jet only if $a_2b_2 = 0$. In particular: if $a_2 = 0$, then it becomes $(a_1,u^2) \cap (b_1c_1+b_2c_3,u^2)$; if $b_2 = 0$, then it becomes $(b_1,u^2) \cap (a_1c_1+a_2c_2,u^2)$. 

If we restrict to $\alpha' = c_1 = 0$, then the ideal of maximal minors of $T_{21}-\lambda P$ becomes
\begin{align*}
(c_3,u^2) \cap (c_2,u) \cap (c_2,(\lambda a_1b_2c_3 -1)u + \lambda a_2b_2c_3) \cap (b_2,u^2) \cap (a_2,u^2) \\
\cap (b_2,u^3,\lambda a_2b_1c_2-1) \cap (a_2,u^3,\lambda a_1b_2c_3 -1).   
\end{align*}
        This defines a $2$-jet only if $a_2b_2c_3 = 0$. In particular: if $a_2 = 0$, then it becomes $(u^2) \cap (c_2,\lambda a_1b_2c_3 - 1) \cap (u^3,\lambda a_1b_2c_3 - 1)$;         if $b_2 = 0$, then it becomes $(u^2) \cap (u^3,\lambda a_2b_1c_2 - 1)$;   
        if $c_3 = 0$, then it becomes $(u^2) \cap (b_2,u^3,\lambda a_2b_1c_2 - 1)$. 
        Note that the cases in which the intersection becomes a $3$-jet correspond to the cases in which $T_{21}-\lambda P$ is not concise by \eqref{eq:orbit21_minors_flattenings}.

        Hence, $  \calF_{T_{21}} \cap Z(b_3)$ equals 
        \begin{align*}
           & \Big[(Z(b_3) \cap Z(\alpha')) \smallsetminus Z(c_1)\Big] \cap \Big[Z(b_1,b_2) \cup Z(b_2,a_1c_1+a_2c_2) \cup Z(a_2,b_1c_1+b_2c_3)\Big] \\ 
            & \cup \Big[(Z(b_3) \cap Z(\alpha') \cap Z(c_1)\Big] \cap \Big[Z(b_2,a_2b_1c_2) \cup Z(a_2,a_2b_2c_3) \cup [Z(c_3) \smallsetminus (Z(b_2) \smallsetminus Z(a_2b_1c_2))]\Big].
        \end{align*}

        \item Let $P \in Z(a_2) \smallsetminus Z(b_3)$. Computationally, we now work modulo $a_2$ and we saturate the ideal of maximal minors also by $a_1b_3$ since $\bfa \neq 0$ and $b_3 = 0$ has been already considered. Then, the ideal becomes
        \[
 (u) \cap \left(c_{2}u-c_{1}v,\lambda\alpha''-c_{1},\,\left(\lambda (a_{1}b_{1}c_{1}+
       a_{1}b_{2}c_{3}+a_{1}b_{3}c_{4})-1\right)u-\lambda a_{1}b_{3}c_{3}v\right),        
        \]
        with $\alpha''(\bfa,\bfb,\bfc) := a_{1}b_{1}c_{1}^{2}+a_{1}b_{2}c_{1}c
       _{3}-a_{1}b_{3}c_{2}c_{3}+a_{1}b_{3}c_{1}c_{4}$.

    If $\alpha'' \neq 0$, then we get either two simple points $(c_1 \neq 0)$ or one simple point $(c_1 = 0)$.

    If we additionally restrict to $\alpha'' = 0$ and $c_1 \neq 0$, then the ideal of maximal minors of $T_{21}-\lambda P$ becomes $(u,b_1c_1^2+b_2c_1c_3-b_3c_2b_3+b_3c_1c_4)$,     which never defines a $2$-jet.

    If instead we restrict to $\alpha'' = c_1 = 0$, then we get the ideal $(c_3,u^2) \cap (c_2,u) \cap (c_2,(\lambda(a_1b_2c_3+a_1b_3c_4)-1)u-\lambda a_1b_3c_3v)$.     This defines a $2$-jet if and only if $c_3 = 0$. Note that if $c_1 = c_3 = 0$ then $\alpha'' = 0$.

    Therefore, we conclude that
    \[
        \calF_{T_{21}} \cap [Z(a_2) \smallsetminus Z(b_3)] = [Z(a_2) \smallsetminus Z(b_3)] \cap Z(c_1,c_3). \qedhere
    \] 
        \end{itemize}
\end{proof}

 \subsection{Orbits n.\protect\idref{24} and n.\protect\idref{25}: concise tensors in $\bbC^2\otimes\bbC^3\otimes\bbC^5$}\label{ssec:n24_n25} Concise tensors in $\bbC^2\otimes\bbC^3\otimes\bbC^5$ have rank and border rank $5$ and are divided into two families:
 \begin{align*}
    T_{24}=&\bf{e}^1_1\otimes (\bf{e}^2_1\otimes \bf{e}^3_1+\bf{e}^2_2\otimes \bf{e}^3_3+\bf{e}^2_3\otimes \bf{e}^3_5) + \bf{e}^1_2 \otimes (\bf{e}^2_1\otimes \bf{e}^3_2+\bf{e}^2_2\otimes \bf{e}^3_4),\\
    T_{25}=&\bf{e}^1_1\otimes (\bf{e}^2_1\otimes \bf{e}^3_1+\bf{e}^2_2\otimes \bf{e}^3_2+\bf{e}^2_3\otimes \bf{e}^3_4) + \bf{e}^1_2 \otimes (\bf{e}^2_1\otimes \bf{e}^3_2+\bf{e}^2_2\otimes \bf{e}^3_3+ \bf{e}^2_3\otimes \bf{e}^3_5).
 \end{align*}
 We compute the forbidden loci of these tensors. By \Cref{lemma:234_rank5}, we have that, given $T \in \calO_{24} \cup \calO_{25}$, then $P \in \calF_{T}$ if and only, for any $\lambda$ either $T-\lambda P$ is concise or belongs to the orbit $\calO_{21}$.

 \begin{proposition} The forbidden locus of $T_{24}$ is
     \begin{align*}
        \calF_{T_{24}} = \bbC^2\otimes\bbC^3\otimes\bbC^5 \smallsetminus \Big[&[Z(a_2,c_1,c_2,c_3,c_4) \smallsetminus Z(a_1b_3c_5)] \cup [Z(a_2b_3) \smallsetminus V(T_{24}*)] \Big]. 
     \end{align*}
 \end{proposition}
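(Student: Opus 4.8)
The plan is to follow \Cref{lemma:234_rank5} and the strategy of \Cref{algo:234_rank5}, adapted to the fact that the ambient space here is $\bbC^2\otimes\bbC^3\otimes\bbC^5$. Since $\calO_{24}$ and $\calO_{25}$ are the only orbits of concise tensors in $\bbC^2\otimes\bbC^3\otimes\bbC^5$ and both have rank $5$, a rank-one tensor $P$ lies in $\calD_{T_{24}}$ exactly when there is some $\lambda\in\bbC$ for which $T_{24}-\lambda P$ is \emph{not} concise in $\bbC^2\otimes\bbC^3\otimes\bbC^5$ and, in the case it becomes concise in $\bbC^2\otimes\bbC^3\otimes\bbC^4$, it does not lie in the orbit $\calO_{21}$ (the only orbit of rank-$5$ tensors in $\bbC^2\otimes\bbC^3\otimes\bbC^4$). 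So the whole argument is a stratification of the rank-one tensors $P=\bfa\otimes\bfb\otimes\bfc$ by the values of $\lambda$ at which the flattenings of $T_{24}-\lambda P$ degenerate, carried out symbolically in $\bbC[\bfa,\bfb,\bfc][\lambda]$.

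First I would compute the radical ideals of maximal minors of the three flattenings of $T_{24}-\lambda P$. The first flattening is the $2\times 15$ matrix with rows $A_0-\lambda a_1(\bfb\otimes\bfc)$ and $A_1-\lambda a_2(\bfb\otimes\bfc)$, where $A_0,A_1$ are the two $3\times 5$ matrices of the pencil associated to $T_{24}$; since $a_2A_0-a_1A_1$ is never of rank one, the $2\times 2$ minors generate the unit ideal and $T_{24}-\lambda P$ is concise in the first factor for every $\lambda$. Hence non-conciseness can only be detected by the second flattening (a $3\times 10$ matrix; degeneration puts $T_{24}-\lambda P\in\Sub_{225}$, where concise tensors have rank $\le 4$) or by the third flattening (a $5\times 6$ matrix; degeneration puts $T_{24}-\lambda P\in\Sub_{234}$). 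I would record the resulting principal ideals $(f_2),(f_3)\subset\bbC[\bfa,\bfb,\bfc][\lambda]$ after the usual saturations.

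On the $\Sub_{225}$ branch there is nothing further to check: any root $\lambda$ of $f_2$ gives $\rk(T_{24}-\lambda P)\le 4$, so $P\in\calD_{T_{24}}$; concretely, when $a_2=0$ and $\bfc$ is proportional to $\bfe^3_5$, the choice $\lambda=1/b_3$ (legitimate for $b_3\neq 0$) kills the $\bfe^2_3$-slice of $T_{24}-\lambda P$, yielding a tensor in $\Sub_{225}$, and this produces the component $Z(a_2,c_1,c_2,c_3,c_4)\smallsetminus Z(a_1b_3c_5)$. On the $\Sub_{234}$ branch, once the constraints on $\bfa,\bfb,\bfc,\lambda$ forcing $T_{24}-\lambda P\in\Sub_{234}$ are known, I would, on each stratum of that locus, choose an explicit change of coordinates on $\bbC^5$ realizing $T_{24}-\lambda P$ as an honest tensor in $\bbC^2\otimes\bbC^3\otimes\bbC^4$, form the pencil $\ell_{T_{24}-\lambda P}\subset\bbP(\bbC^3\otimes\bbC^4)$, and apply \Cref{lemma:234_rank5}: such a $P$ fails to be in $\calD_{T_{24}}$ by this route precisely when $\ell_{T_{24}-\lambda P}\cap\sigma_2(X_{34})$ is a $2$-jet supported at a rank-two point, i.e. the ideal of $3\times 3$ minors saturates to $(\ell(u,v)^2)$ while the ideal of $2\times 2$ minors saturates to the unit ideal (exactly the $\calO_{21}$-versus-$\calO_{20}$ distinction, computed as in \Cref{lemma:rank4_characterization} and \Cref{algo:233}). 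Carrying this out — treating $a_2=0$ and $b_3=0$ separately — should leave exactly the component $Z(a_2b_3)\smallsetminus V(T_{24}^*)$, where $\langle T_{24}^*,\bfa\otimes\bfb\otimes\bfc\rangle=a_1b_1c_1+a_1b_2c_3+a_1b_3c_5+a_2b_1c_2+a_2b_2c_4$: the factor $a_2b_3$ is the condition for the third flattening to drop, and $V(T_{24}^*)$ records the $\lambda$'s discarded because the relevant $5\times 5$ minor equals $\lambda\langle T_{24}^*,P\rangle-1$, a unit off $V(T_{24}^*)$. Finally I would verify that no $P$ with $a_2b_3\neq 0$ contributes — for such $P$ the third flattening stays of full rank and the $\Sub_{225}$ analysis gives no drop, so $\{a_2b_3\neq 0\}\subset\calF_{T_{24}}$ — and assemble the two components into $\calD_{T_{24}}$, giving the stated $\calF_{T_{24}}$.

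The main obstacle will be the $\Sub_{234}$ branch. As already observed for \Cref{algo:234} inside \Cref{algo:234_rank5}, the reduction of $T_{24}-\lambda P\in\Sub_{234}$ to a genuine $\bbC^2\otimes\bbC^3\otimes\bbC^4$-tensor is not canonical and must be adapted to each stratum of the locus where the third flattening degenerates (in particular $a_2=0$ and $b_3=0$ cannot be handled uniformly), and on each stratum one must re-run the pencil analysis of \Cref{lemma:rank4_characterization}. A secondary but genuine difficulty is the bookkeeping of the ``there exists $\lambda$'' quantifier: for a fixed $P$ the admissible $\lambda$'s form the proper closed subset of roots of $f_2$ or $f_3$, and one must check that this subset is nonempty and avoids $\lambda=0$ in precisely the claimed locus — this is exactly where the exclusions of $V(T_{24}^*)$ and of $Z(a_1b_3c_5)$ enter.
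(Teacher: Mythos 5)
Your plan follows the paper's outline --- compute $I_1,I_2,I_3$, handle the $\Sub_{225}$ degeneration directly, and for the $\Sub_{234}$ degeneration apply \Cref{lemma:234_rank5} to test membership in $\calO_{21}$ --- and you arrive at the correct loci. The one real divergence is exactly at the step you single out as the main obstacle: you propose to stratify the locus $a_2b_3=0$, choose on each stratum an explicit coordinate change on $\bbC^5$ exhibiting $T_{24}-\lambda P$ as a tensor in $\bbC^2\otimes\bbC^3\otimes\bbC^4$, and only then run the pencil analysis. The paper avoids all of this: after passing to the quotient by $I_3$, it computes the ideal of $3\times3$ minors of the $3\times 5$ pencil $T_{24}-\lambda P$ \emph{in the original coordinates}, and the primary decomposition
\[
(b_3,u)\cap(a_2,u)\cap(b_3,\,c_2c_3-c_1c_4,\,c_4u-c_3v,\,c_2u-c_1v)\cap(a_2,\,c_2c_3-c_1c_4,\,c_4u-c_3v,\,c_2u-c_1v)
\]
shows at once that the intersection with $\sigma_2$ is never a $2$-jet, for any $\bfa,\bfb,\bfc$. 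So $T_{24}-\lambda P$ never lands in $\calO_{21}$, and $\calF_{T_{24}}$ is precisely the locus of $P$ for which $T_{24}-\lambda P$ stays concise for all $\lambda$, which is read off from $I_2,I_3$ alone. The useful takeaway is that the $2$-jet criterion of \Cref{lemma:234_rank5} can be tested on the $3\times 5$ pencil without any coordinate reduction; that observation is what keeps this orbit short, whereas your route would reach the same endpoint only after the stratum-by-stratum bookkeeping you (rightly) identify as painful. A minor slip in your $\Sub_{225}$ analysis: the $\lambda$ killing the $\bfe^2_3$-slice is $1/(a_1b_3c_5)$, not $1/b_3$.
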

 \begin{proof}
     Let $P = \bfa \otimes \bfb \otimes \bfc$. Then, the maximal minors of all flattenings of $T_{24} - \lambda P$ are 
     \begin{align*}
        & I_1 = (1), \quad I_2 = \left(c_{4},\,c_{3},\,c_{2},\,c_{1},\,a_{2},\,\lambda a_{1}b_{3}c_{5}-1\right), \\
        &I_3 = \left(a_{2}b_{3}, \lambda(a_{1}b_{1}c_{1}+a_{2}b_{1}c_{2}+a_{1}b_{2}c
        _{3}+a_{2}b_{2}c_{4}+a_{1}b_{3}c_{5})-1\right).
     \end{align*}
     Assume that we are in a case in which the third flattening has not maximal rank, i.e., computationally we work over the quotient ring modulo $I_3$. The ideal of $3 \times 3$ minors of the pencil of $3\times 5$ matrices $T_{24} - \lambda P$ is 
     \[
\left(b_{3},\,u\right) \cap \left(a_{2},\,u\right) \cap \left(b_{3},\,c_{2}c_{3}-c_{1}c_{4},\,c_{4}u-c_{3}v,\,c_{2}u-c_{1}v\right) \cap \left(a_{2},\,c_{2}c_{3}-c_{1}c_{4
        },\,c_{4}u-c_{3}v,\,c_{2}u-c_{1}v\right).     
     \]
     Clearly, there is not choice of $P$ such that this intersection is a $2$-jet, i.e., such that $T_{24}- \lambda P \in \calO_{21}$. Therefore, $\calF_{T_{24}}$ consists of all rank-one tensors $P$ such that $T-\lambda P$ is concise for any $\lambda$.
 \end{proof}

  \begin{proposition}The forbidden locus of $T_{25}$ is
     \[
        \calF_{T_{25}} = \bbC^2\otimes\bbC^3\otimes\bbC^5 \smallsetminus \Big[\big[Z(a_2b_1-a_1b_2) \smallsetminus Z(a_{1}b_{1}c_{1}+a_{1}b_{2}c_{2}+a_{2}b_{2}c_{3}+a_{1}b_{3}c_{4}+a_{2}b_{3}c_{5})\big] \smallsetminus Z(c_4,c_5,c_2^2-4c_1c_3)\Big].
     \]
 \end{proposition}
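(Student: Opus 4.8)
The plan is to run \Cref{algo:234_rank5} symbolically on $T_{25}$, exactly in the spirit of the computation carried out for $T_{24}$. As recorded just before the statement, every concise tensor of $\bbC^2\otimes\bbC^3\otimes\bbC^5$ has rank $5$, and by \Cref{lemma:234_rank5} the only tensors obtained from $T_{25}$ by subtracting a decomposable tensor that fail to be concise yet still have rank $5$ are those lying in $\calO_{21}$. Hence $P=\bfa\otimes\bfb\otimes\bfc$ lies in $\calF_{T_{25}}$ if and only if, for every $\lambda\in\bbC$, the tensor $T_{25}-\lambda P$ is either concise or belongs to $\calO_{21}$; equivalently, $P\in\calD_{T_{25}}$ precisely when some $\lambda$ makes $T_{25}-\lambda P$ non-concise and not in $\calO_{21}$.

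First I would compute the ideals $I_1,I_2,I_3\subset\bbC[\bfa,\bfb,\bfc][\lambda]$ of maximal minors of the three flattenings of $T_{25}-\lambda P$. I expect (and this is easily confirmed with \cite{M2}) that $I_1=I_2=(1)$, so the first two factors never degenerate, while
\[
    I_3=\bigl(a_2b_1-a_1b_2,\ \lambda\,g-1\bigr),\qquad g:=a_1b_1c_1+a_1b_2c_2+a_2b_2c_3+a_1b_3c_4+a_2b_3c_5 .
\]
Therefore $T_{25}-\lambda P$ can be non-concise only when $a_2b_1-a_1b_2=0$, in which case the unique offending value is $\lambda_0=g^{-1}$, which exists (and is nonzero) exactly when $P\notin Z(g)$. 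In all other cases $T_{25}-\lambda P$ is concise for every $\lambda$, so $P\in\calF_{T_{25}}$; thus $\calD_{T_{25}}\subseteq Z(a_2b_1-a_1b_2)\smallsetminus Z(g)$, and what is left is to decide, for such $P$, whether $T_{25}-\lambda_0P$ lies in $\calO_{21}$.

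For that last step I would pass to the quotient $R=\bbC[\bfa,\bfb,\bfc][\lambda]/\bigl(a_2b_1-a_1b_2,\ \lambda g-1\bigr)$ and apply \Cref{lemma:234_rank5}: form the pencil of $3\times5$ matrices attached to $T_{25}-\lambda P$, compute its ideal of maximal ($3\times3$) minors saturated with respect to $(u,v)$, and determine when it is generated by a square $\ell(u,v)^2$ — meaning the pencil meets $\sigma_2(X_{34})$ in a $2$-jet — while the saturated ideal of $2\times2$ minors is the unit ideal, so that the supporting point has rank two, i.e. we are in $\calO_{21}$ and not in $\calO_{20}$. The claim to be read off is that, inside $Z(a_2b_1-a_1b_2)\smallsetminus Z(g)$, this occurs exactly on $Z(c_4,c_5,c_2^2-4c_1c_3)$: away from $c_4=c_5=0$ the relevant binary cubic has no repeated linear factor, while on $c_4=c_5=0$ it acquires one precisely when the discriminant $c_2^2-4c_1c_3$ of the remaining binary quadratic vanishes. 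Combining with the previous step gives $\calD_{T_{25}}=\bigl(Z(a_2b_1-a_1b_2)\smallsetminus Z(g)\bigr)\smallsetminus Z(c_4,c_5,c_2^2-4c_1c_3)$, whose complement is the asserted description of $\calF_{T_{25}}$.

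The hard part will be this last step. After imposing $a_2b_1-a_1b_2=0$ and substituting $\lambda=\lambda_0=g^{-1}$ there is no longer a single normal form, and — just as warned after \Cref{algo:234} — in order to present $T_{25}-\lambda_0P$ as a genuine tensor of $\bbC^2\otimes\bbC^3\otimes\bbC^4$ one must perform a change of coordinates that depends on which of $a_1,a_2,b_1,b_2,b_3$ is invertible, and then in each chart track how the $3\times3$ and $2\times2$ minors of the resulting $3\times4$ pencil factor over $\bbC[u,v]$. The delicate parts are verifying that the rank-two $2$-jet condition cuts out precisely $Z(c_4,c_5,c_2^2-4c_1c_3)$, and checking the degenerate strata — where several of $c_1,c_3,c_4,c_5$ vanish together, or where $T_{25}-\lambda_0P$ would further drop into $\Sub_{233}$ — so that the final set-theoretic identity is exact; this is the same sort of case-by-case, computer-assisted bookkeeping used for the preceding orbits.
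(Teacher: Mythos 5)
Your overall framework is exactly the paper's: compute the ideals $I_1,I_2,I_3$ of maximal minors of the flattenings of $T_{25}-\lambda P$ to see that conciseness can only fail on $Z(a_2b_1-a_1b_2)\smallsetminus Z(g)$, and then invoke \Cref{lemma:234_rank5} to decide when the resulting degenerate tensor lies in $\calO_{21}$ (equivalently, when the pencil meets $\sigma_2(X_{34})$ in a rank-two $2$-jet). Your identification of $I_3$ and of the final answer both agree with the paper.

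The one place you over-engineer is the last step. You propose a chart-by-chart change of coordinates that realizes $T_{25}-\lambda_0 P$ as a tensor of $\bbC^2\otimes\bbC^3\otimes\bbC^4$ and then computes minors of the resulting $3\times 4$ pencil. This is unnecessary: the paper simply passes to the quotient ring modulo $I_3$ and computes the ideal of $3\times 3$ minors of the original $3\times 5$ pencil. After the change of basis one column of the $3\times 5$ matrix vanishes, so the ideal of $3\times 3$ minors of the full $3\times 5$ pencil coincides with that of the $3\times 4$ submatrix — hence no chart decomposition is needed. Concretely, the resulting ideal is
\[
\left(c_{5}u-c_{4}v,\;c_{3}c_{4}^{2}-c_{2}c_{4}c_{5}+c_{1}c_{5}^{2},\;c_{3}c_{4}u+(-c_{2}c_{4}+c_{1}c_{5})v,\;c_{3}u^{2}-c_{2}uv+c_{1}v^{2}\right),
\]
which is not the principal ideal of a ``binary cubic'' as your heuristic suggests. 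The reason the case analysis is so short is that the generator $c_5u-c_4v$ is a nonzero linear form whenever $c_4\neq0$ or $c_5\neq0$: an ideal containing a nonzero linear form can never equal $(\ell^2)$, so a $2$-jet is impossible there, and on $c_4=c_5=0$ the ideal collapses to $(c_3u^2-c_2uv+c_1v^2)$, which is a square precisely when $c_2^2-4c_1c_3=0$. Your worry about degenerate strata (when the tensor drops further into $\Sub_{233}$) is also moot for the same reason: any such further degeneration still fails to produce a rank-two $2$-jet, so those $P$ lie in $\calD_{T_{25}}$ automatically. With these simplifications your argument becomes the paper's proof.
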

 \begin{proof}
     Let $P = \bfa \otimes \bfb \otimes \bfc$. Then, the maximal minors of all flattenings of $T_{25} - \lambda P$ are 
     \begin{align*}
        & I_1 = I_2 = (1), \quad I_3 = \left(a_{2}b_{1}-a_{1}b_{2}, \lambda(a_{1}b_{1}c_{1}+a_{1}b_{2}c_{2}+a_{2}b_{2}c_{3}+a_{1}b_{3}c_{4}+a_{2}b_{3}c_{5})-1\right).
     \end{align*}
     Assume that we are in a case in which the third flattening has not maximal rank, i.e., computationally we work over the quotient ring modulo $I_3$. The ideal of $3 \times 3$ minors of the pencil of $3\times 5$ matrices $T_{25} - \lambda P$ is 
     \begin{equation}\label{eq:orbit25_3minors}
 \left(c_{5}u-c_{4}v,\,c_{3}c_{4}^{2}-c_{2}c_{4}c_{5}+c_{1}c_{5
        }^{2},\,c_{3}c_{4}u+\left(-c_{2}c_{4}+c_{1}c_{5}\right)v,\,c_{3}u^{2}-c_{2}uv+c_{1}v^{2}\right).    
     \end{equation}
     If $c_4 = c_5 = 0$, then the latter simplifies to $(c_{3}u^{2}-c_{2}uv+c_{1}v^{2})$ 
     which defines a $2$-jet if $c_2^2-4c_1c_3 = 0$. 

     If $c_4 \neq 0$ or $c_5 \neq 0$, then \eqref{eq:orbit25_3minors} cannot define a $2$-jet and, therefore, $T-\lambda P \not\in \calO_{21}$; in particular, $P \not\in \calF_{T_{25}}$.
 \end{proof}
 
 \subsection{Orbit n.\protect\idref{26}: tensors of maximal rank in $\bbC^2\otimes\bbC^3\otimes\bbC^6$}\label{ssec:n26}
 The maximal rank of a tensor $T\in \mathbb{C}^2\otimes \mathbb{C}^3\otimes\mathbb{C}^{n}$ is $6$ and it is achieved for $n=6$. The corresponding normal form for this class is 
$$
T_{26}=\bf{e}^1_1\otimes (\bf{e}^2_1\otimes \bf{e}^3_1+\bf{e}^2_2\otimes \bf{e}^3_3+\bf{e}^2_3\otimes \bf{e}^3_5)+\bf{e}^1_2\otimes (\bf{e}^2_1\otimes \bf{e}^3_2+\bf{e}^2_2\otimes \bf{e}^3_4+\bf{e}^2_3\otimes \bf{e}^3_6).
$$
Therefore, a rank-one tensor $P = \bfa\otimes\bfb\otimes\bfc$ belongs to the forbidden locus of $T_{26}$ if and only if $T_{26}-\lambda P$ is concise for all $\lambda$'s. 
\begin{proposition}
{ Let $T \in \calO_{26} \subset \bbC^2 \otimes \bbC^3 \otimes \bbC^6$ such that $(A, B, C) \cdot T = T_{26}$. Then, $$\calF_T = V((A^T, B^T, C^T) \cdot T^*).$$} 
\end{proposition}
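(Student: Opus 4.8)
The plan is to mirror the proof of \Cref{prop:orbit9}, reducing everything to the normal form $T_{26}$. By \Cref{rmk:change_coordinates}, a rank-one tensor $P = \bfa\otimes\bfb\otimes\bfc$ lies in $\calF_T$ if and only if $(A,B,C)\cdot P \in \calF_{T_{26}}$, and using the adjointness identity $\langle T_{26}^*,(A,B,C)\cdot P\rangle = \langle (A^T,B^T,C^T)\cdot T_{26}^*, P\rangle$ exactly as in the proof of \Cref{prop:orbit9}, it suffices to prove that $\calF_{T_{26}} = V(T_{26}^*)$.

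The second step records that, by \Cref{table:kronecker}, $\calO_{26}$ is the unique orbit of concise tensors in $\bbC^2\otimes\bbC^3\otimes\bbC^6$, so every concise tensor in this space has rank $6$ (the maximal rank), while every proper coordinate subspace $\Sub_{136}$, $\Sub_{226}$, $\Sub_{235}$ has maximal rank at most $5$. Since $T_{26} = (T_{26}-\lambda P) + \lambda P$ forces $\rk(T_{26}-\lambda P) \ge \rk(T_{26}) - 1 = 5$ for every $\lambda$, it follows that $P \in \calD_{T_{26}}$ precisely when $T_{26}-\lambda P$ fails to be concise for some $\lambda$, and $P \in \calF_{T_{26}}$ precisely when $T_{26}-\lambda P$ is concise for all $\lambda$.

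The third step analyses conciseness of $T_{26}-\lambda P$ through its flattenings. A direct inspection of the normal form shows that the image of $\flatt_1(T_{26})$ is a pencil of $3\times 6$ matrices and the image of $\flatt_2(T_{26})$ a net of $2\times 6$ matrices, both containing no nonzero matrix of rank $\le 1$; hence a rank-one perturbation $\lambda\flatt_i(P)$ can never make $\flatt_1$ or $\flatt_2$ degenerate, so $T_{26}-\lambda P$ is automatically concise in the first two factors. The third flattening $\flatt_3(T_{26}) : (\bbC^6)^\vee \to \bbC^2\otimes\bbC^3 \cong \bbC^6$ is invertible, and since $\flatt_3(P)$ has rank $1$, the same computation as for $T_9$ in \Cref{prop:orbit9} gives $\det\flatt_3(T_{26}-\lambda P) = \pm(1 - \lambda\langle T_{26}^*, P\rangle)$, which vanishes for some $\lambda$ if and only if $\langle T_{26}^*, P\rangle \neq 0$. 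Combining: if $\langle T_{26}^*, P\rangle \neq 0$ then for $\lambda_0 = \langle T_{26}^*, P\rangle^{-1}$ the tensor $T_{26}-\lambda_0 P$ lies in $\Sub_{235}$, hence has rank $\le 5$, hence rank exactly $5 = \rk(T_{26})-1$ by the inequality above, so $P \in \calD_{T_{26}}$; if $\langle T_{26}^*, P\rangle = 0$ then $T_{26}-\lambda P$ is concise for all $\lambda$, hence of rank $6$, so $P \in \calF_{T_{26}}$. This yields $\calF_{T_{26}} = V(T_{26}^*)$, and the general statement follows from the first step.

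I do not expect a serious obstacle: the argument is structurally identical to \Cref{prop:orbit9}, the point being that $\bbC^2\otimes\bbC^3\otimes\bbC^6$ carries a single orbit of (maximal rank) concise tensors. The only point requiring a genuine — though short — verification is that neither $\flatt_1(T_{26}-\lambda P)$ nor $\flatt_2(T_{26}-\lambda P)$ can drop rank, i.e., that the relevant small pencils/nets of matrices contain no rank-one element; this is an explicit computation on the normal form $T_{26}$.
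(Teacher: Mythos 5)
Your proof is correct and follows essentially the same route as the paper: reduce to the normal form via \Cref{rmk:change_coordinates}, observe that $\calO_{26}$ is the unique concise orbit so that $P \in \calF_{T_{26}}$ iff $T_{26}-\lambda P$ stays concise for all $\lambda$, and then compute the ideals of maximal minors of the three flattenings to find $I_1 = I_2 = (1)$ and $I_3 = (\lambda\langle T_{26}^*,P\rangle - 1)$. The only difference is cosmetic: the paper reports the flattening computation as a Macaulay2 output, while you justify $I_1 = I_2 = (1)$ by the hand argument that the pencil and net coming from $\flatt_1(T_{26})$, $\flatt_2(T_{26})$ contain no rank-$\le 1$ matrix, and $I_3$ by the same determinant identity used for $T_9$ in \Cref{prop:orbit9}.
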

\begin{proof}
Let $P = \bfa\otimes\bfb\otimes\bfc \in \bbC^2\otimes \mathbb{C}^3\otimes\mathbb{C}^6$. The maximal minors of all three flattenings of $T_{26}-\lambda P$ are \[I_1 = I_2 = (1), I_3 = (\lambda \langle T_{26}^*,P \rangle -1).\]
 Now, let $T \in \calO_{26}$ such that $(A \otimes B \otimes C) \cdot T = T_{26}$. Then, {as observed in \Cref{rmk:change_coordinates}}, $\bfa \otimes \bfb \otimes \bfc \in \calF_T$ if and only if $(A \otimes B \otimes C) \cdot (\bfa \otimes \bfb \otimes \bfc) \in \calF_{T_{26}}$.
This concludes the proof.
\end{proof}

\bibliographystyle{alpha}
\bibliography{References_BOS.bib}

\end{document}